\theoremstyle{plain}
\newtheorem{thm}{Theorem}[section]
\newtheorem{lem}[thm]{Lemma}
\newtheorem{cor}[thm]{Corollary}
\newtheorem{dfn}[thm]{Definition}
\newtheorem{prop}[thm]{Proposition}
\newtheorem{rmk}[thm]{Remark}
\newcommand{\blue}[1]{\textcolor{blue}{#1}}
\newcommand{\green}[1]{\textcolor{green}{#1}}
\def\D{\mathrm{D}}
\def\P{\mathrm{P}}
\def\R{\mathrm{R}}
\def\T{\mathrm{T}}
\def\d{\mathrm{d}}
\def\Nset{\mathbb{N}}
\def\Qset{\mathbb{Q}}
\def\Rset{\mathbb{R}}
\def\Sset{\mathbb{S}}
\def\Tset{\mathbb{T}}
\def\Zset{\mathbb{Z}}
\def\M{\mathscr{M}}
\def\I{\mathscr{I}}
\def\P{\mathscr{P}}
\def\h{\mathrm{h}}
\DeclareMathOperator{\sech}{sech}
\DeclareMathOperator{\csch}{csch}
\DeclareMathOperator{\sn}{sn}
\DeclareMathOperator{\cn}{cn}
\DeclareMathOperator{\dn}{dn}
\DeclareMathOperator{\rank}{rank}
\def\epsilon{\varepsilon}
\def\theequation{\arabic{section}.\arabic{equation}}
\begin{document}

% **********************************************************
% Title of This Paper
% **********************************************************

\title[Obstructions to Integrability of Nearly Integrable Systems]%
{Obstructions to Integrability of Nearly Integrable Dynamical Systems near
 %Connected and Compact Components of
 Regular Level Sets}
\thanks{}

\author{Shoya Motonaga}
\author{Kazuyuki Yagasaki}

\address{Department of Applied Mathematics and Physics, Graduate School of Informatics,
Kyoto University, Yoshida-Honmachi, Sakyo-ku, Kyoto 606-8501, JAPAN}

\email{mnaga@amp.i.kyoto-u.ac.jp (S.~Motonaga)}

\email{yagasaki@amp.i.kyoto-u.ac.jp (K.~Yagasaki)}

\date{\today}
\subjclass[2020]{%
%Obstructions to integrability for finite-dimensional Hamiltonian and Lagrangian systems
37J30;
%Explicit solutions, first integrals of ordinary differential equations
34A05;
%Perturbations, asymptotics of solutions to ordinary differential equations
34E10;
%Periodic orbits or solutions
37C27; 34C25; 
%Homoclinic and heteroclinic orbits or solutions
37C29; 34C37;
%Ordinary differential equations and systems on manifolds
34C40;
%Perturbations of finite-dimensional Hamiltonian systems
37J40;
%Periodic, homoclinic and heteroclinic orbits of finite-dimensional Hamiltonian systems
37J46}
\keywords%{perturbed systems; first integrals; nearly integrable systems; Melnikov method}
{Integrability; nearly integrable system; resonant periodic orbit;
 first integral; commutative vector field; Melnikov method}
 
\begin{abstract}
We study the existence of real-analytic first integrals and real-analytic integrability for perturbations
 of integrable systems in the sense of Bogoyavlenskij
 including non-Hamiltonian ones.
We especially assume that there exists a family of periodic orbits
 on a regular level set of the first integrals having a connected and compact component
 and give sufficient conditions for nonexistence of the same number of real-analytic first integrals
 in the perturbed systems as the unperturbed ones
 and for their real-analytic nonintegrability near the level set
 such that the first integrals and commutative vector fields
 depend analytically on the small parameter.
We compare our results with classical results of Poincar\'e and Kozlov
 for systems written in action and angle coordinates
 and discuss their relationships with the subharmonic and homoclinic Melnikov methods
 for periodic perturbations of single-degree-of-freeedom Hamiltonian systems.
We illustrate our theory for three examples
 containing the periodically forced Duffing oscillator.
\end{abstract}
\maketitle

% ****************************************************************************************
% Section 1 Introduction 
% ****************************************************************************************

\section{Introduction}

In his famous memoir \cite{P90},
 which was related to a prize competition celebrating the 60th birthday of King Oscar II,
 Henri Poincar\'e studied two-degree-of-freedom Hamiltonian systems
 depending on a small parameter,
 say $\epsilon$ here although he used the letter $\mu$ instead,
 such that they are integrable when $\epsilon=0$,
 and showed the nonexistsnce of first integrals
 which are analytic in the state variables and parameter $\epsilon$
 and functionally independent of Hamiltonians,
 under some nondegenerate conditions.
If there exists such a first integral,
 then the Hamiltonian systems are integrable for $|\epsilon|\ge 0$ sufficiently small 
 in the sense of Liouville \cite{A89,M99}.
The result was improved significantly
 in the first volume of his masterpieces \cite{P92} published two years later,
 so that more-degree-of-freedom Hamiltonian systems can be treated.
Using these results,
 he discussed the nonexistence of such first integrals
 in the restricted planar and spacial three-body problem there.
See also \cite{B96} for an account of his work
 from a mathematical and historical perspectives.
Subsequently, his results were sophisticated and generalized
 to non-Hamiltonian systems \cite{K83,K96}.
In particular, Kozlov \cite{K96} treated multi-dimensional systems of the form
\begin{equation}
\dot{I}=\epsilon h(I,\theta;\epsilon),\quad
\dot{\theta}=\omega(I)+\epsilon g(I,\theta;\epsilon),\quad
(I,\theta)\in\Rset^\ell\times\Tset^m,
\label{eqn:aasys}
\end{equation}
where $\epsilon$ is a small parameter such that $|\epsilon|\ll 1$,
 $\Tset^m=\prod_{j=1}^m\Sset^1$ with $\Sset^1=\Rset/2\pi\Zset$ is an $m$-dimensional torus
 and $h(I,\theta;\epsilon)$, $\omega(I)$ and $g(I,\theta;\epsilon)$ are analytic in $(I,\theta,\epsilon)$.
Note that the system \eqref{eqn:aasys} is Hamiltonian if $\ell=m$ as well as $\epsilon=0$ or
\[
\D_I h(I,\theta;\epsilon)=-\D_\theta g(I,\theta;\epsilon),
\]
and non-Hamiltonian if not.
When $\epsilon=0$, Eq.~\eqref{eqn:aasys} becomes
\begin{equation}
\dot{I}=0,\quad
\dot{\theta}=\omega(I)
\label{eqn:aasys0}
\end{equation}
which we refer to as the \emph{unperturbed system} for \eqref{eqn:aasys}.
We often use this terminology for other systems below.
Here we state some details of his result.

We expand $h(I,\theta;0)$ in Fourier series as
\begin{equation}
h(I,\theta;0)=\sum_{r\in\Zset^m}\hat{h}_r(I)\exp(i r\cdot\theta),
\label{eqn:hath}
\end{equation}
where $\hat{h}_r(I)$, $r\in\Zset^m$, are the Fourier coefficients
 and ``$\cdot$'' represents the inner product.
We assume the following for \eqref{eqn:aasys}:
\begin{enumerate}
\setlength{\leftskip}{-1em}
\item[\bf(K1)]
The system~\eqref{eqn:aasys} has $s$ first integrals $F_j(I,\theta; \epsilon)$, $j=1,\ldots,s$, %i.e.,
%\begin{align*}
%&
%\epsilon D_IF_j(I,\theta;\epsilon)\cdoth(I,\theta;\epsilon)\\
%&\quad
% +D_\theta F_j(I,\theta;\epsilon)\cdot(\omega(I)+\epsilong_k(I,\theta;\epsilon))=0,\quad
% j=1,\ldots,s,
%\end{align*}
%for any $(I,\theta)\in\Rset^\ell\times\Tset^m$ for $|\epsilon|$ sufficiently small,
% and they
 which are analytic in $(I,\theta,\epsilon)$;
\item[\bf(K2)]
If $r\in\Zset^m$ and $r\cdot\omega(I)=0$ for any $I\in\Rset^\ell$, then $r=0$.
\end{enumerate}
If assumption~(K2) holds,
 then we say that the unperturbed system \eqref{eqn:aasys0} is \emph{nondegenerate}.
%\blue{
%\begin{rmk}
% (K2)' equivalent
% \end{rmk}
% }
Under %assumptions
(K1) and (K2)
 we can show that $F_j(I,\theta;0)$, $j=1,\ldots,s$, are independent of $\theta$
  (see Lemma~1 in Section~1 of Chapter IV of \cite{K96}),
 and write $F_{j0}(I)=F_j(I,\theta;0)$ and $F_0(I)=(F_{10}(I),\ldots,F_{s0}(I))$.
We refer to $\P_s\subset\Rset^\ell$ as a \emph{Poincar\'e set}
 if for each $I\in\P_s$ there exists linearly independent vectors $r_j\in\Zset^m$, $j=1,\ldots,\ell-s$,
 such that
\begin{enumerate}
\setlength{\leftskip}{-1.8em}
\item[(i)]
$r_j\cdot\omega(I)=0$, $j=1,\ldots ,\ell-s$;
\item[(ii)]
$\hat{h}_{r_j}(I)$, $j=1,\ldots ,\ell-s$, are linearly independent.
\end{enumerate}
Let $U$ be a domain in $\Rset^\ell$.
A set $\Delta\subset U$ is called a \emph{key set} (or \emph{uniqueness set}) for $C^\omega(U)$
 if any analytic function vanishing on $\Delta$ vanishes on $U$.
For example, any dense set in $U$ is a key set for $C^\omega(U)$.
In this situation, we have the following theorem
 (see Section~1 of Chapter IV of \cite{K96} for its proof).

\begin{thm}[Kozlov]
\label{thm:Kozlov}
Suppose that assumptions~(K1) and (K2) hold,
 the Jacobian matrix $\D F_0(I)$ has the maximum rank at a point $I_0\in\Rset^\ell$
 and a Poincar\'e set $\P_s\subset U$ is a key set for $C^\omega(U)$,
 where $U$ is a neighborhood of $I_0$.
Then the system \eqref{eqn:aasys} has no first integral
 which is real-analytic in $(I,\theta,\epsilon)$
 and functionally independent of $F_j(I,\theta;\epsilon)$, $j=1,\ldots,s$, near $\epsilon=0$.
\end{thm}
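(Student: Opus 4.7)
The plan is to argue by contradiction, propagating the consequences of the first-integral condition $\dot F=0$ in powers of $\epsilon$. Suppose \eqref{eqn:aasys} admits a real-analytic first integral $F(I,\theta;\epsilon)$ that is functionally independent of $F_1,\ldots,F_s$ in a neighborhood of $\epsilon=0$, and expand
\[
F(I,\theta;\epsilon)=\sum_{k=0}^{\infty}\epsilon^k F_{(k)}(I,\theta).
\]
Matching the $\epsilon^0$-coefficient in $\dot F=0$ along \eqref{eqn:aasys} gives $\omega(I)\cdot\partial_\theta F_{(0)}=0$. The same $\theta$-independence argument that was invoked for the $F_{j0}$ after (K1)-(K2), together with the nondegeneracy (K2), then forces $F_{(0)}$ to depend only on $I$.

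At order $\epsilon$ the first-integral condition reduces, using the $\theta$-independence of $F_{(0)}$, to the homological equation
\[
\omega(I)\cdot\partial_\theta F_{(1)}(I,\theta)=-h(I,\theta;0)\cdot\D_I F_{(0)}(I).
\]
Expanding both sides in Fourier modes in $\theta$ via \eqref{eqn:hath} and comparing coefficients, the mode-$r$ relation reads $i(r\cdot\omega(I))\hat F_{(1),r}(I)=-\hat h_r(I)\cdot\D_I F_{(0)}(I)$. Since $F$ is real-analytic, so is each $\hat F_{(1),r}$, and evaluating at any $I$ with $r\cdot\omega(I)=0$ forces
\[
\hat h_r(I)\cdot\D_I F_{(0)}(I)=0.
\]
The identical reasoning applied to each $F_j$ yields $\hat h_r(I)\cdot\D_I F_{j0}(I)=0$ under the same resonance condition. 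Now fix $I\in\P_s$ with associated vectors $r_1,\ldots,r_{\ell-s}$: by (i) all these $r_j$ are resonant at $I$, and by (ii) the $\hat h_{r_j}(I)$ are linearly independent, so the preceding orthogonalities constrain $\D_I F_{(0)}(I),\D_I F_{10}(I),\ldots,\D_I F_{s0}(I)$ to a common $s$-dimensional subspace of $\Rset^\ell$. Hence every $(s+1)\times(s+1)$ minor of the matrix formed by these $s+1$ gradients vanishes on $\P_s$. Each such minor is real-analytic on $U$ and vanishes on the key set $\P_s$, so it vanishes on all of $U$; combined with the maximal-rank assumption on $\D F_0$ at $I_0$, this yields an analytic function $\phi_0$ with $F_{(0)}(I)=\phi_0(F_{10}(I),\ldots,F_{s0}(I))$ near $I_0$.

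To bootstrap this leading-order dependence to all orders I would iterate. The difference $\tilde F=F-\phi_0(F_1,\ldots,F_s)$ is again an analytic first integral of \eqref{eqn:aasys} that vanishes at $\epsilon=0$, hence factors as $\tilde F=\epsilon F^{(1)}$ with $F^{(1)}$ analytic and still a first integral. Row operations show that every $(s+1)\times(s+1)$ Jacobian minor of $(F_1,\ldots,F_s,F)$ with respect to $(I,\theta)$ equals $\epsilon$ times the corresponding minor for $(F_1,\ldots,F_s,F^{(1)})$. Applying the previous paragraph to $F^{(1)}$ and iterating shows that these minors are divisible by $\epsilon^k$ for every $k\ge 1$; by real-analyticity in $\epsilon$ they therefore vanish identically near the reference point, contradicting the assumed functional independence of $F,F_1,\ldots,F_s$.

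The step I expect to be the principal obstacle is the rank-reduction argument on the Poincar\'e set together with its propagation via the key-set hypothesis. Careful bookkeeping is needed to verify that the two groups of orthogonality relations—those coming from $F$ and those coming from the given $F_j$—assemble into a uniform rank bound at every point of $\P_s$, and then that real-analyticity transports this bound from $\P_s$ to the whole neighborhood $U$ on which the maximal-rank hypothesis on $\D F_0$ enables a local analytic implicit function theorem to deliver the function $\phi_0$. Once this core step is secured, the order-by-order homological analysis and the $\epsilon$-iteration at the end are essentially routine.
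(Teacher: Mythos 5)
Your argument is correct and is essentially the classical Poincar\'e--Kozlov proof: order-by-order expansion in $\epsilon$, $\theta$-independence of the leading term via (K2), comparison of Fourier coefficients in the first-order homological equation to get the orthogonality relations on resonant sets, propagation from the Poincar\'e set to $U$ by the key-set property, and the division-by-$\epsilon$ iteration on wedge products to upgrade leading-order dependence to full functional dependence. The paper does not reproduce a proof of this theorem but defers to Section~1 of Chapter~IV of \cite{K96}, and your reconstruction matches that standard route (the final iteration step is also the same mechanism as the paper's Proposition~\ref{fcn-ind}/Ziglin-lemma argument in Appendix~A).
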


A version of Theorem~\ref{thm:Kozlov} for the Hamiltonian case $\ell=m$
 was given in \cite{K83} (see also Theorem~7.1 of \cite{AKN06}).
The Hamiltonian case of $s=1$ with a dense Poincar\'e set in Theorem~\ref{thm:Kozlov}
 was treated by Poincar\'e %for $\ell,m=2$ in \cite{P90} and
 for $\ell=m\ge 2$ in \cite{P92}.
When $s=0$ in (K1),
 Theorem~\ref{thm:Kozlov} means that under its hypotheses
 there exists no first integral which is analytic in $(I,\theta,\epsilon)$.
When $s=1$ in (K1), which always occurs if the system \eqref{eqn:aasys} is Hamiltonian,
 it means that under its hypotheses
% , which hold for $\ell,m=2$
% if besides (K1) and (K2) there exists a key set $\Delta$ for $C^\omega(U)$
% such that {\color{red}\sout{$\D F_{10}(I)\neq 0$,}},
% $r\cdot\omega(I)=0$ and $h_r(I)\neq 0$ on $\Delta$
% for some $r\in\Zset^2$,
 there exists no first integral which is analytic
 in $(I,\theta)\in U\times\Tset^m$ and $\epsilon$ near $\epsilon=0$
 and functionally independent of $F_1(I,\theta,\epsilon)$.
When $s=m-1$ in (K1),
 if besides (K1) and (K2) there exists a key set $\Delta$ for $C^\omega(U)$
 such that $r\cdot\omega(I)=0$ and $h_r(I)\neq 0$ on $\Delta$ for some $r\in\Zset^m\setminus \{0\}$,
 there exists no additional first integral which is analytic in $(I,\theta,\epsilon)$.
For Hamiltonian systems, if they are not Liouville-integrable
 for any $|\epsilon|>0$ sufficiently small,
 then there does not exist such a first integral.
For non-Hamiltonian systems
 this may be true in an appropriate meaning 
 but some additional ingredients are needed as seen below.
%If the hypotheses do not hold,
% then Theorem~\ref{thm:Kozlov} says nothing
% about the nonexistence of more than $s$ such first integral.
 
In this paper we study more general dynamical systems of the form
\begin{equation}%\label{DynSystem}
\dot{x} = X_\epsilon(x),\quad x\in \mathscr{M},
\label{eqn:p-gsys}
\end{equation}
where $\epsilon$ is a small parameter such that $|\epsilon|\ll 1$,
 $\mathscr{M}$ is an $n$-dimensional analytic manifold for $n\ge 2$
 and  the vector field $X_\epsilon$ is analytic in $x$ and $\epsilon$.
Let $X_\epsilon=X^0+\epsilon X^1+O(\epsilon^2)$ for $|\epsilon|>0$ sufficiently small. 
When $\epsilon=0$, the system \eqref{eqn:p-gsys} becomes
\begin{equation}%\label{DynSystem}
\dot{x} = X^0(x),
\label{eqn:gsys0}
\end{equation}
which is assumed to be \emph{analytically $(q,n-q)$-integrable}
 in the following meaning of Bogoyavlenskij \cite{B98} for some positive integer $q\le n$.

\begin{dfn}[Bogoyavlenskij]
\label{dfn:1}
%Consider an autonomous n-dimensional system
%\begin{equation}%\label{DynSystem}
%\dot{x} = X(x),\quad x\in \mathscr{M}.
%\label{eqn:gsys}
%\end{equation}
%where $X$ is a smooth vector field on $\mathscr{M}$.
%Let $q$ be an integer such that $1\le q\le n$.
The system \eqref{eqn:gsys0} is called \emph{$(q,n-q)$-integrable} or simply \emph{integrable} 
 if there exist $q$ vector fields $Y_1(x)(:= X^0(x)),Y_2(x),\dots,Y_{q}(x)$
 and $n-q$ scalar-valued functions $F_1(x),\dots,F_{n-q}(x)$
 such that the following two conditions hold:
\begin{enumerate}
\setlength{\leftskip}{-1.8em}
\item[\rm(i)]
$Y_1(x),\dots,Y_q(x)$ are linearly independent almost everywhere and commute with each other,
 i.e., $[Y_j,Y_k](x)\equiv 0$ for $j,k=1,\ldots,q$, where $[\cdot,\cdot]$ denotes the Lie bracket;
\item[\rm(ii)]
$F_1(x),\dots, F_{n-q}(x)$ are functionally independent, i.e.,
 $dF_1(x), \dots, dF_{n-q}(x)$ are linearly independent almost everywhere,
 and $F_1(x),\dots,F_{n-q}(x)$ are first integrals of $Y_1, \dots,Y_q$,
 i.e., $dF_k(Y_j)=0$ for $j=1,\ldots,q$ and $k=1,\ldots,n-q$.
\end{enumerate}
%The definition was originally given for smooth commutative vector fields
% and first integrals in \cite{B98}.
If $Y_1,Y_2,\dots,Y_q$ and $F_1,\dots,F_{n-q}$ are analytic (resp. meromorphic), 
 then Eq.~\eqref{eqn:gsys0} is said to be \emph{analytically} (resp. \emph{meromorphic})
 \emph{integrable}.
\end{dfn}

Definition~\ref{dfn:1} is considered as a generalization of %complete
 Liouville-integrability for Hamiltonian systems \cite{A89,M99}
 since an $m$-degree-of-freedom Liouville-integrable Hamiltonian system with $m\ge 1$
 has not only $m$ functionally independent first integrals
 but also $m$ linearly independent commutative (Hamiltonian) vector fields
 generated by the first integrals.
The ($\ell+m$)-dimensional system~ \eqref{eqn:aasys0} is $(m,\ell)$-integrable
 in the Bogoyavlenskij sense.

The system \eqref{eqn:p-gsys} is regarded
 as a perturbation of the analytically integrable system \eqref{eqn:gsys0}.
If there exist $q'$ linearly independent, analytic commutative vector fields
 and $n-q'$ functionally independent, analytic first integrals
 depending on $\epsilon$ analytically near $\epsilon=0$ for some positive integter $q'\le n$,
 then the perturbed system \eqref{eqn:p-gsys} is analytically $(q',n-q')$-integrable
 for $|\epsilon|\ge 0$ sufficiently small.
We especially assume that there exists a $q$-parameter family of periodic orbits
 on a regular level set of the $n-q$ first integrals having a connected and compact component
 (see Section~2 for our precise definitions containing this one)
 and give sufficient conditions for nonexistence of such $n-q$ real-analytic first integrals
 and for real-analytic nonintegrability of the perturbed system \eqref{eqn:p-gsys} near the regular level set
 such that the first integrals and commutative vector fields
 depend analytically on $\epsilon$ near $\epsilon=0$.
The persistence of such first integrals and commutative vector fields 
 in the perturbed system \eqref{eqn:p-gsys} along with that of periodic and homoclinic orbits 
 was previously discussed in \cite{MY}.
Our approach is different from those of Poincar\'e \cite{P90,P92} and Kozlov \cite{K83,K96}
 and based on the technique of \cite{MY}.
Recently, another sufficient condition for nonintegrability
 of nearly integrable dynamical systems near resonant periodic orbits %in our meaning
 was also obtained using a different approach in \cite{Y21a}
 and the theory was applied to prove the nonintegrability of the restricted three-body problem 
 in \cite{Y21b}, when the independent and state variables are extended to complex ones.

The unperturbed system~\eqref{eqn:gsys0} can be transformed to \eqref{eqn:aasys0}
 under our assumptions, as stated in Proposition~\ref{LAJ} below.
However, Theorem~\ref{thm:Kozlov} only says in our setting for \eqref{eqn:aasys}
 that there exists no such additional first integral %in general
% or no such additional first integral when the system \eqref{eqn:p-gsys} is Hamiltonian,
 even if its hypotheses hold. %, since $s=0$ or $s=1$.
In particular, it does not allow us to determine the nonintegrability of \eqref{eqn:p-gsys}
 in the Bogoyavlenskij sense when it is non-Hamiltonian.
We also describe a consequence of our results to \eqref{eqn:aasys}
 and show how it improves the results of Poincar\'e \cite{%P90,
 P92} and Kozlov \cite{K83,K96}.

Moreover, we discuss a relationship between our results
 and the subharmonic and homoclinic Melnikov methods \cite{GH83,W90,Y96}
 for time-periodic perturbations of single-degree-of-freedom analytic Hamiltonian systems,
 which can be transformed to %written in
 the form of \eqref{eqn:p-gsys} with $\ell=1$ and $m=2$,
 i.e, $(2,1)$-integrable, and have a one-parameter family of periodic orbits when $\epsilon=0$.
As well known, if the subharmonic Melnikov functions have a simple zero,
 then the corresponding unperturbed periodic orbits persist in the perturbed systems. 
Morales-Ruiz \cite{M02} studied the Hamiltonian perturbation case
 in which the unperturbed systems have homoclinic orbits,
 and showed a relationship between their nonintegrability
 and a version due to Ziglin \cite{Z82b} of the homoclinic Melnikov method 
 when the independent and state variables are extended to complex ones
 and the small parameter $\epsilon$ is also regarded as a state variable.
More concretely, under some restrictive conditions,
 based on the generalized version due to Ayoul and Zung \cite{AZ10}
 of the Morales-Ramis theory \cite{M99,MR01},
 which provides a sufficient condition for nonintegrability of autonomous dynamical systems,
 he essentially proved that they are meromorphically nonintegrable
 if the homoclinc Melnikov functions
 which are obtained as integrals along closed loops on the complex plane
 are not identically zero.
Here the version of the Melnikov method enables
 us to detect transversal self-intersection of complex separatrices of periodic orbits
 and to prove its analytic nonintegrability, unlike the standerd version \cite{GH83,M63,W90}.
Here we prove two variants of Morales-Ruiz \cite{M02} for periodic orbits:
 if the subharmonic Melnikov functions for a dense set of the unperturbed periodic orbits
 are not identically zero,
 then there exists no first integral depending analytically on $\epsilon$ near $\epsilon=0$;
 and if the `standard' homoclinic Melnikov functions \cite{GH83,M63,W90} are not identically zero,
 then the perturbed systems are not Bogoyavlenskij-integarble
 such that the commutative vector fields and first integrals
 depend analytically on $\epsilon$ near $\epsilon=0$.

We illustrate our theory for three examples:
Simple pendulum with a constant torque, %general two-dimensional systems with elliptic equilibria,
 second-order coupled oscillators and the periodically forced Duffing oscillator \cite{GH83,H79,W90}.
% and a perturbed Kepler problem \cite{BUF97,OP18}
Real-analytic nonintegrability is discussed in %the last example and 
special cases of the second and third examples
 while existence of real-analytic first integrals in the rest.
In particular, the special case of the third one is shown to be nonintegrable in the above meaning
 even if it does not have transverse homoclinic orbits to a periodic orbit.
 
This paper is organized as follows:
In Section~\ref{Main results}, we state our precise assumptions and main theorems.
In Section~\ref{pf}, we give proofs of the main theorems.
We describe a consequence of our results to \eqref{eqn:aasys} in Section~4
 and discuss a relationship between our results and the subharmonic and homoclinic Melnikov methods
 for time-periodic perturbations of single-degree-of-freedom Hamiltonian systems in Section~5.
Finally, we give the three examples in Section~6.

% ****************************************************************************************
% Section 2 Main results
% ****************************************************************************************

\section{Main Results}\label{Main results}
In this section, we state our main results for \eqref{eqn:p-gsys}.
We first make the following assumptions on the unperturbed system \eqref{eqn:gsys0}:
\begin{enumerate}
\setlength{\leftskip}{-1em}
\item[\bf(A1)]
For some positive integer $q<n$,
 the system \eqref{eqn:gsys0} is analytically $(q,n-q)$ Bogoyavlenskij-integrable,
 i.e., there exist $q$ analytic vector fields $Y_1(x)(:=X^0(x)),\ldots,Y_q(x)$
 and $n-q$ analytic scalar-valued functions $F_1(x),\ldots,F_{n-q}(x)$
 such that  conditions~(i) and (ii) of Definition~\ref{dfn:1} hold.
\item[\bf(A2)]
Let $F(x)=(F_1(x),\ldots,F_{n-q}(x))$%,
 %where the subscript {\scriptsize$\T$} means the transpose operator
 .
There exists a regular value $c\in\Rset^{n-q}$ of $F$,
 i.e., $\rank dF(x)=n-q$ when $F(x)=c$,
 such that $F^{-1}(c)$ has a connected and compact component
 and $Y_1(x), \ldots, Y_q(x)$ are linearly independent on $F^{-1}(c)$.
\end{enumerate}
Henceforth we assume without loss of generality that $F^{-1}(c)$ is connected and compact itself
 in (A2), by reducing the domain of $F$ if necessary.
Under %assumptions~
 (A1) and (A2) we have the following result
 like a well-known %Liouville-Arnold
 theorem for Hamiltonian systems \cite{A89} 
 (see \cite{B98, Z06,Z18} for the details).

\begin{prop}[Liouville-Miuner-Arnold-Jost]\label{LAJ}
Suppose that assumptions~(A1) and (A2) hold.
Then we have the following:
\begin{enumerate}
\setlength{\leftskip}{-2em}
\item[\rm(i)]
The level set $F^{-1}(c)$ is analytically diffeomorphic to the $q$-dimensional torus
 $\Tset^q$;
\item[(ii)]
There exists an analytic diffeomorphism $\varphi:U\times\Tset^q %\xrightarrow{\sim}
 \to\mathcal{U}$, where $U$ and $\mathcal{U}$ are, respectively,
 neighborhoods of $I=I_0$ in $\Rset^{n-q}$ and of $F^{-1}(c)$ in $\M$ for some $I_0\in\Rset^{n-q}$,
 such that 
\begin{enumerate}
\setlength{\leftskip}{-1.8em}
\item[(iia)]
$\varphi(\{I_0\}\times\Tset)=F^{-1}(c)$;
\item[(iib)]
$F\circ\varphi(I, \theta)$ depends only on $I$,
 where $(I, \theta)\in U\times\Tset^q$;
\item[(iic)]
The flow of $X^0$ on $\mathcal{U}$
 is analytically conjugate to that of \eqref{eqn:aasys0} with $\ell=n-q$ and $m=q$
%the vector field
%\begin{equation}
%\dot{\theta}=\omega(I),\quad
%\dot{I}=0
%\label{eqn:aasys0}
%\end{equation}
 on $U\times \Tset^q$.
%where $\omega:U\to\Rset^q$ is analytic.
\end{enumerate}
\end{enumerate}
\end{prop}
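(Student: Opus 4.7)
The plan is to establish Proposition~\ref{LAJ} as an analytic Bogoyavlenskij-style generalization of the Liouville--Arnold theorem, following the classical argument but tracking analyticity throughout. First I would verify that $F^{-1}(c)$ is an analytic submanifold of dimension $q$: since $c$ is regular we have $\rank\,\D F(x)=n-q$ on $F^{-1}(c)$, and the analytic implicit function theorem supplies the manifold structure. The identities $dF_k(Y_j)=0$ from Definition~\ref{dfn:1}(ii) show that each $Y_j$ is tangent to every level set of $F$, and (A2) guarantees $Y_1,\dots,Y_q$ are pointwise linearly independent on $F^{-1}(c)$. Hence these restrict to $q$ commuting, linearly independent analytic vector fields on the compact connected $q$-manifold $F^{-1}(c)$.

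Next, compactness makes the flows of $Y_j$ complete, and $[Y_j,Y_k]=0$ guarantees that the map $\Phi:\Rset^q\times F^{-1}(c)\to F^{-1}(c)$, $\Phi(t,x)=\phi_{Y_1}^{t_1}\circ\cdots\circ\phi_{Y_q}^{t_q}(x)$, is a well-defined analytic action. Because the $Y_j$ span the tangent space everywhere, each orbit is open, so connectedness forces transitivity; the stabilizer of a base point is a discrete subgroup of $\Rset^q$, and compactness of $F^{-1}(c)$ forces it to be a lattice $\Lambda$ of full rank $q$. This yields an analytic isomorphism $F^{-1}(c)\cong\Rset^q/\Lambda\cong\Tset^q$, proving (i).

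For part (ii), I would repeat this construction on nearby level sets. Regularity of $c$ and linear independence of the $Y_j$ are open conditions, so for $I$ in a small neighborhood $U$ of some $I_0\in\Rset^{n-q}$ the set $F^{-1}(I)$ is a compact analytic $q$-torus on which the $Y_j$ act freely and transitively, with period lattice $\Lambda(I)$ depending analytically on $I$. Picking an analytic local transversal $\Sigma$ to $F^{-1}(c)$ at a base point $x_0$ gives an analytic section $I\mapsto x_0(I)\in\Sigma\cap F^{-1}(I)$. After choosing an analytic basis of $\Lambda(I)$ on $U$, we rescale linearly in $t$ so the lattice becomes $(2\pi\Zset)^q$, and set
\begin{equation*}
\varphi(I,\theta)=\phi_{Y_1}^{t_1(I,\theta)}\circ\cdots\circ\phi_{Y_q}^{t_q(I,\theta)}(x_0(I)),
\end{equation*}
which defines the required analytic diffeomorphism $U\times\Tset^q\to\mathcal{U}$. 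Property (iia) holds by construction; (iib) follows because the $Y_j$ preserve $F$; and (iic) follows because $X^0=Y_1$ becomes a constant vector field on each torus in the angle variables, with frequency vector $\omega(I)$ determined by expressing $Y_1$ in the chosen basis of $\Lambda(I)$.

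The main technical obstacle is producing an analytic basis of generators for $\Lambda(I)$ on $U$. This reduces, via the analytic implicit function theorem, to solving $\phi_{Y}^{\tau}(x_0(I))=x_0(I)$ for $\tau=\tau(I)$ starting from a chosen basis of $\Lambda(I_0)$; the nondegeneracy needed for the implicit function theorem is exactly the linear independence of the $Y_j$ on the transversal, so after possibly shrinking $U$ an analytic basis exists. All other steps are routine once analyticity is preserved at this point.
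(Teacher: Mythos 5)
The paper does not actually prove Proposition~\ref{LAJ}: it is stated as a known result and the reader is referred to \cite{B98,Z06,Z18} for details. Your proposal reconstructs the standard argument from those references (the torus-action proof of the Bogoyavlenskij/Zung version of the Liouville--Arnold theorem), and it is essentially sound: the commuting, complete, pointwise independent vector fields $Y_1,\dots,Y_q$ generate a locally free analytic $\Rset^q$-action on each level set, transitivity follows from openness of orbits plus connectedness, and compactness forces the isotropy lattice to have full rank, giving part (i); flowing out from an analytic transversal and normalizing the period lattice gives $\varphi$ and parts (iia)--(iic). Two points deserve slightly more care than you give them. First, in the implicit-function-theorem step for the analytic basis of $\Lambda(I)$, the equation $\phi_Y^{\tau}(x_0(I))=x_0(I)$ is $n$ equations in the $q$ unknowns $\tau$, so it should be set up intrinsically on the fibre $F^{-1}(F(x_0(I)))$ (or, equivalently, one notes that $(\tau,I)\mapsto\phi_Y^{\tau}(x_0(I))$ is a local analytic diffeomorphism of $\Rset^q\times U$ onto a neighborhood in $\M$ and takes the preimage of the section); only then is the linearization a square invertible system. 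Second, you assert that nearby fibres are again compact connected $q$-tori because ``regularity and independence are open conditions''; compactness and connectedness of nearby fibres are not formal consequences of openness but follow from the local trivialization that your own flow-out construction provides, so it is cleaner to build $\varphi$ first on $U\times\Rset^q$ and read off the fibre structure from it. With these adjustments your proof matches what is done in the cited literature.
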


The variables $I$ and $\theta$ are called the \emph{action} and \emph{angle variables}
 as in Hamiltonian systems,
 and $\omega(I)$ is referred to as the \emph{angular frequency vector}.
Let $\omega_j(I)$ be the $j$th component of $\omega(I)$ for $j=1,\ldots,q$.
Henceforth $U$ denotes the neighborhood of $I=I_0$ in Proposition~\ref{LAJ}.
Moreover, we assume the following on \eqref{eqn:aasys0} along with (K2):

\begin {enumerate}
\setlength{\leftskip}{-1em}
\item[\bf(A3)]
There exists a key set $D_\R$ for $C^\omega(U)$
 such that for $I\in D_\R$ a resonance of multiplicity $q-1$,
\[
\dim_\Qset \langle \omega_1(I), \ldots, \omega_q(I) \rangle =1,
\]
occurs with $\omega(I)\neq 0$,
 i.e., there exists a positive constant $\omega_0(I)$ depending on $I$ such that
\[
\frac{\omega(I)}{\omega_0(I)}\in\Zset^q\setminus\{0\}.
\]
We choose $\omega_0(I)$ as large as possible below.
\end{enumerate}

%We remark that (A3) does not prevent (A4).
We easily see that if $\rank\D\omega(I^*)=q$ for some $I^*\in U$, %is surjective,
 then both (K2) and (A3) hold.
%In particular, it is sufficient for Hamiltonian system which is nondegenerate near $I=0$.
Assumption~(A3) also implies that if $I\in D_\R$,
 then the system \eqref{eqn:aasys0} has a $q$-parameter family of periodic orbits
\begin{equation}
(I, \theta)=(I, \omega(I)t+\tau),\quad
\tau\in\Tset^q,
\label{eqn:po}
\end{equation}
with the period $T^I=2\pi/\omega_0(I)$.
Note that the periodic orbits \eqref{eqn:po} is parameterized by $(q-1)$ parameters essentially 
 since two periodic orbits $(I,\omega(I)t+\tau)$ and $(I,\omega(I)t+\tau_0)$
 represent the same orbit if $\tau-\tau_0=\omega(I)t_0$ for some $t_0\in\Rset$.
We also have a $q$-parameter (but essentially $(q-1)$-parameter) family of periodic orbits
\[
%\begin{equation}
\gamma_\tau^I(t)=\varphi(I, \omega(I)t+\tau),\quad
 (I,\tau)\in D_\R\times\Tset^q,
%\label{eqn:gamma}
%\end{equation}
\]
with the period $T^I$ in the unperturbed system \eqref{eqn:gsys0}.
Define the integrals
\begin{align}\label{eqn:i}
\I_{F_k}^I(\tau):=\int_0^{T^I} dF_j(X^1)_{\gamma_\tau^I (t)} dt,\quad
 k=1,\ldots,n-q,
\end{align}
for $I\in D_\R$ and set $\I_F^I(\tau):=(\I_{F_1}^I(\tau),\ldots,\I_{F_{n-q}}^I(\tau))$.
Note that
\[
\I_{F_k}^I(\tau+\omega(I)t)=\I_{F_k}^I(\tau)
\]
for $\tau\in\Tset^q$ and $t\in\Rset$.
%\end{rmk}
We state the first of our main results as follow.

\begin{thm}\label{thm:main-fi}
Suppose that assumptions (A1)-(A3) and (K2) hold.
If there exists a key set $D\subset D_\R$ for $C^\omega(U)$
 such that $\I_F^I(\tau)$ is not identically zero for any $I\in D$,
 then the perturbed system \eqref{eqn:p-gsys} does not have $n-q$ real-analytic first integrals
 in a neighborhood of $F^{-1}(c)$ near $\epsilon=0$
 such that they are functionally independent for $|\epsilon|\neq 0$
 and depend analytically on $\epsilon$.
\end{thm}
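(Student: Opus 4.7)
The plan is to argue by contradiction: assume the perturbed system possesses $n-q$ real-analytic first integrals $\tilde F_j(x;\epsilon)$ that depend analytically on $\epsilon$ and are functionally independent for $|\epsilon|\neq 0$, and I will derive that $\I_F^I$ must vanish identically in $\tau$ for some $I^*\in D$, contradicting the hypothesis. By Proposition~\ref{LAJ}, I transport the analysis to action-angle coordinates $(I,\theta)\in U\times\Tset^q$ on a neighborhood of $F^{-1}(c)$, in which \eqref{eqn:gsys0} becomes $\dot I=0$, $\dot\theta=\omega(I)$. After expanding $\tilde F_j(I,\theta;\epsilon)=\tilde F_{j,0}(I,\theta)+\epsilon\tilde F_{j,1}(I,\theta)+O(\epsilon^2)$ and matching powers of $\epsilon$ in $d\tilde F_j(X_\epsilon)=0$, the order-$\epsilon^0$ equation says that $\tilde F_{j,0}$ is a first integral of $X^0$. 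Assumption (K2) forces the set of nonresonant $I\in U$ to be dense, and on each such torus the flow of $X^0$ has dense orbits, so by analyticity $\tilde F_{j,0}$ descends to a function $G_j(I)$ of $I$ alone.

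Next I would invoke a Ziglin-type lemma to reduce, without loss of generality, to the case in which $G_1,\ldots,G_{n-q}$ are functionally independent as analytic functions on $U$. Because the $\tilde F_j$ are functionally independent as formal power series in $\epsilon$ (being so for each $\epsilon\neq 0$), one can iteratively replace $\tilde F_j$ by an analytic-in-$\epsilon$ combination of the form $\epsilon^{-n_j}(\tilde F_j-\Phi_j(\tilde F_1,\ldots,\tilde F_{j-1}))$ for appropriate integers $n_j\geq 0$ and analytic functions $\Phi_j$, until the leading terms form a functionally independent family. After this reduction, $\det DG$ is a nonzero analytic function on $U$. Matching at order $\epsilon$ then yields the cohomological equation $d\tilde F_{j,0}(X^1)+d\tilde F_{j,1}(X^0)=0$, which in action-angle coordinates takes the form
\[
\sum_{l=1}^{n-q}\frac{\partial G_j}{\partial I_l}(I)\,X^1_{I_l}(I,\theta)+\omega(I)\cdot\partial_\theta\tilde F_{j,1}(I,\theta)=0,
\]
with $X^1_{I_l}$ denoting the $I_l$-component of the pushforward of $X^1$ to $U\times\Tset^q$.

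To finish, I would fix $I\in D_\R$ so that $\theta(t)=\omega(I)t+\tau$ is $T^I$-periodic, and integrate this cohomological equation along the orbit. The second summand becomes the total derivative of $\tilde F_{j,1}$ along the orbit and vanishes by periodicity (using $\omega(I)T^I\in 2\pi\Zset^q$ from (A3)), leaving the linear system
\[
\sum_{l=1}^{n-q}\frac{\partial G_j}{\partial I_l}(I)\,J_l(I,\tau)=0,\qquad J_l(I,\tau):=\int_0^{T^I}X^1_{I_l}(I,\omega(I)t+\tau)\,dt.
\]
Since $\det DG\not\equiv 0$ on $U$ and $D\subset D_\R$ is a key set for $C^\omega(U)$, there must exist $I^*\in D$ with $\det DG(I^*)\neq 0$, whence $J_l(I^*,\tau)=0$ for all $l$ and all $\tau\in\Tset^q$. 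Since $F_k\circ\varphi(I,\theta)=F_{k,0}(I)$ by Proposition~\ref{LAJ}(iib), the Melnikov integrals in \eqref{eqn:i} factor as $\I_{F_k}^{I^*}(\tau)=\sum_l\partial_{I_l}F_{k,0}(I^*)\,J_l(I^*,\tau)=0$, contradicting the hypothesis that $\I_F^{I^*}(\tau)$ is not identically zero. The hardest step will be the Ziglin-type reduction: functional independence is only assumed for $\epsilon\neq 0$, and transferring it to the leading coefficients while preserving analytic dependence on $\epsilon$ is the most delicate ingredient in the argument.
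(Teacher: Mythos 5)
Your proposal is correct and follows essentially the same route as the paper's proof: pass to action-angle variables, show the leading-order terms of the first integrals depend only on $I$, invoke a Ziglin-type reduction (the paper's Proposition~\ref{fcn-ind}) to obtain functional independence of the leading terms, derive the vanishing of the first-order Melnikov integrals along resonant tori, and close with the key-set property. The only cosmetic differences are that you derive the persistence condition directly from the order-$\epsilon$ cohomological equation where the paper cites Theorem~2.2 of \cite{MY}, and you run the key-set argument in the contrapositive direction (finding $I^*\in D$ with invertible Jacobian rather than showing the Jacobian determinant vanishes identically).
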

We prove Theorem~\ref{thm:main-fi} in Section~\ref{pf:thm1}.

We next consider a special case in which
 Eq.~\eqref{eqn:p-gsys} is a two- or more-degree-of-freedom Hamiltonian system.
For an integer $m\ge 2$,
 let $(\M,\Omega)$ be a $2m$-dimensional analytic symplectic manifold
 with a symplectic form $\Omega$,
 and let $H_\epsilon(x)=H^0(x)+\epsilon H^1(x)+O(\epsilon^2)$
 be an analytic Hamiltonian for \eqref{eqn:p-gsys}
 and depend analytically on $\epsilon$ near $\epsilon=0$.
We assume the following:
\begin {enumerate}
\setlength{\leftskip}{-0.6em}
\item[\bf%(A1${}_{\mathbf{H}}$)
(A1')]
The unperturbed Hamiltonian system \eqref{eqn:gsys0} with the Hamiltonian $H^0(x)$
 is real-analytically Liouville-integrable, i.e.,
 there exist $m$ functionally independent analytic first integrals $F_1(x)(:=H^0(x )),F_2(x),\ldots,F_m(x)$
 such that they are pairwise Poisson commutative,
 i.e., $\{F_j,F_k\}=0$ for $j,k=1,\ldots,m$,
 where $\{\cdot,\cdot\}$ denotes the Poisson bracket for the symplectic form $\Omega$.
\end{enumerate}
Assumption~(A1') means (A1) with $q=m$ and $Y_j=X_{F_j}$ for $j=1, \ldots, m$,
 where $X_{F_j}$ denotes the Hamiltonian vector field for the Hamiltonian $F_j(x)$.
So we immediately obtain the following from Theorem~\ref{thm:main-fi}.

\begin{cor}\label{cor:Hsys}
Suppose that (A1'), (A2), (A3) and (K2) hold.
If there exists a key set $D\subset D_\R$ for $C^\omega(U)$
 such that $\I_F^I(\tau)$ with $X_1=X_{H^1}$ is not identically zero for any $I\in D$,
 then the perturbed Hamiltonian system for the Hamiltonian $H_\epsilon(x)$
 is not real-analytically Liouville-integrable in a neighborhood of $F^{-1}(c)$ near $\epsilon=0$
 such that $m-1$ additional first integrals also depend analytically on $\epsilon$.
\end{cor}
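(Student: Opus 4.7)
The plan is to derive the corollary as an immediate specialization of Theorem~\ref{thm:main-fi}. First I would verify that (A1') implies (A1) in the Hamiltonian setting with $q=m$: on the $n=2m$-dimensional symplectic manifold, the Hamiltonian vector fields $Y_j:=X_{F_j}$ are pairwise commutative because $[X_{F_j},X_{F_k}]=X_{\{F_j,F_k\}}=0$, they are linearly independent almost everywhere by functional independence of the $F_k$, and the relations $dF_k(Y_j)=\{F_j,F_k\}=0$ make $F_1,\ldots,F_m$ first integrals of every $Y_j$. Thus $n-q=m$ functionally independent analytic first integrals are already in hand, with $X^0=Y_1=X_{H^0}$, so the hypotheses of Theorem~\ref{thm:main-fi} are set up correctly for the perturbation $X_\epsilon=X_{H_\epsilon}$.

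Next I would argue by contradiction. Suppose the perturbed Hamiltonian system with Hamiltonian $H_\epsilon$ were real-analytically Liouville-integrable in a neighborhood of $F^{-1}(c)$, with $m-1$ additional first integrals depending analytically on $\epsilon$ near $\epsilon=0$. Together with $H_\epsilon$ itself, this would supply $m=n-q$ real-analytic first integrals of \eqref{eqn:p-gsys} that are functionally independent for every sufficiently small $|\epsilon|$ and that depend analytically on $\epsilon$. This is precisely the configuration excluded by Theorem~\ref{thm:main-fi} under (A1), (A2), (A3), (K2), together with the non-vanishing of $\I_F^I(\tau)$ on a key set $D\subset D_\R$.

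Finally, because the first-order perturbation of the vector field is exactly $X^1=X_{H^1}$, the subharmonic integrals $\I_F^I(\tau)$ invoked in the corollary's hypothesis are literally those appearing in Theorem~\ref{thm:main-fi}; no further identification is needed. The contradiction then completes the proof. There is essentially no obstacle: the substantive content sits inside Theorem~\ref{thm:main-fi}, and the corollary amounts to the observation that Liouville-integrability is strictly stronger than the existence of $n-q$ functionally independent real-analytic first integrals with analytic dependence on $\epsilon$, so the negative conclusion of the theorem automatically yields the negative conclusion of the corollary.
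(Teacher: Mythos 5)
Your proposal is correct and follows essentially the same route as the paper, which simply observes that (A1') is (A1) with $q=m$ and $Y_j=X_{F_j}$ and then invokes Theorem~\ref{thm:main-fi} directly; your verification that the $X_{F_j}$ commute and that Liouville-integrability would furnish the $m=n-q$ functionally independent, $\epsilon$-analytic first integrals excluded by that theorem is exactly the intended (and in the paper, unwritten) argument.
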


For integrability of non-Hamiltonian systems \eqref{eqn:p-gsys},
 we have to consider not only first integrals but also commutative vector fields.
So we need the following assumption additionally:
\begin {enumerate}
\setlength{\leftskip}{-1em}
\item[\bf(A4)]
For some $I^*\in U$, the Jacobian matrix $\D\omega(I^\ast)$ is injective, i.e., 
\[
 \rank\D\omega(I^\ast)=n-q.
\]
\end{enumerate}
Note that (A4) holds only when $n-q\le q$.
Finally, we state the second main result as follows.

\begin{thm}\label{thm:main-integ}
Suppose that assumptions (A1)-(A4) and (K2) hold.
If there exists a key set $D\subset D_{\R}$ for $C^\omega(U)$
 such that $\I_F^I(\tau)$ is not constant for any $I\in D$,
 then for $|\epsilon|\neq 0$ sufficiently small
 the perturbed system \eqref{eqn:p-gsys} is not real-analytically integrable 
 in the Bogoyavlenskij sense near $F^{-1}(c)$
 such that the first integrals and commutative vector fields
 depend analytically on $\epsilon$ near $\epsilon=0$:
 In particular there exists only such $n-q-1$ first integrals
 and such $q-1$ commutative vector fields at most.
\end{thm}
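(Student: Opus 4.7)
The plan is to verify separately that (i) the perturbed system admits at most $n-q-1$ real-analytic first integrals, and (ii) at most $q-1$ real-analytic commutative vector fields, both depending analytically on $\epsilon$. Since Bogoyavlenskij $(q',n-q')$-integrability requires $q'+(n-q')=n$ and $(q-1)+(n-q-1)=n-2<n$, no such integrable structure can exist. Bound (i) follows immediately from Theorem~\ref{thm:main-fi}, since non-constancy of $\I_F^I(\tau)$ on $D$ is strictly stronger than the non-vanishing hypothesis used there.

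To establish (ii), I would pass to action-angle coordinates $(I,\theta)\in U\times\Tset^q$ via Proposition~\ref{LAJ}, so that $X^0=\omega(I)\cdot\partial_\theta$, and first prove a rigidity lemma: any analytic vector field $Z=a(I,\theta)\partial_I+b(I,\theta)\partial_\theta$ on $U\times\Tset^q$ commuting with $X^0$ has the form $Z=b(I)\cdot\partial_\theta$. This follows by splitting $[Z,X^0]=0$ componentwise into $\omega\cdot\partial_\theta a=0$ and $\omega\cdot\partial_\theta b=\D\omega(I)\,a$; a Fourier expansion in $\theta$ combined with (K2) forces $a=a(I)$ and $b=b(I)$ together with the algebraic constraint $\D\omega(I)\,a(I)\equiv0$, and injectivity of $\D\omega(I^*)$ from (A4) together with analyticity then gives $a\equiv0$.

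Next, suppose for contradiction that the perturbed system admits $q$ pairwise commuting analytic-in-$\epsilon$ vector fields $Z_1^\epsilon=X_\epsilon,Z_2^\epsilon,\ldots,Z_q^\epsilon$, linearly independent almost everywhere. A standard elimination process allows us to assume that the leading-order vector fields $Z_j^0$ themselves are linearly independent almost everywhere; by the rigidity lemma, $Z_j^0=c_j(I)\cdot\partial_\theta$ with $\{c_j(I)\}_{j=1}^q$ spanning $\Rset^q$ on some open dense subset $U_0\subset U$. Expanding $[Z_j^\epsilon,X_\epsilon]=0$ to order $\epsilon$ yields $[Z_j^0,X^1]+[Z_j^1,X^0]=0$; applying $dF_k$ and using that $F_k$ is a first integral of $X^0$ and of $Y_j:=Z_j^0$, one obtains the cohomological equation
\begin{equation*}
X^0\bigl(Z_j^1(F_k)\bigr)=Y_j\bigl(X^1(F_k)\bigr)
\end{equation*}
on $U\times\Tset^q$. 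Integrating along the unperturbed closed orbit $\gamma_\tau^I$ of period $T^I$ annihilates the left-hand side, and by the chain rule the right-hand side becomes $c_j(I)\cdot\partial_\tau\I_{F_k}^I(\tau)$. Thus $c_j(I)\cdot\partial_\tau\I_{F_k}^I(\tau)\equiv0$ for every $j$ and $k$, and on $D_\R\cap U_0$ this forces $\partial_\tau\I_F^I(\tau)\equiv0$, i.e., $\I_F^I(\tau)$ is constant in $\tau$. Since $D$ is a key set in $U$ and $U\setminus U_0$ is contained in the zero set of an analytic function, $D\cap U_0$ is nonempty, yielding an $I\in D$ where $\I_F^I$ is constant, contradicting the hypothesis.

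The hard part will be the precise identification of the order-$\epsilon$ obstruction with a \emph{derivative} of the Melnikov integral $\I_F^I(\tau)$, which is where the strengthened hypothesis ``not constant'' (as opposed to merely ``not identically zero'' in Theorem~\ref{thm:main-fi}) enters, and where (A4) plays its critical role through the rigidity lemma. A secondary technical point is the preliminary elimination step ensuring linear independence of $Z_j^0$ at $\epsilon=0$ while preserving pairwise commutativity, which I expect to handle along the lines of the technique developed in \cite{MY}.
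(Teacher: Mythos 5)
Your proposal is correct and follows essentially the same route as the paper: the first-integral bound is delegated to Theorem~\ref{thm:main-fi}, your rigidity lemma is the paper's Lemma~\ref{lem:cvf} (proved there via Kozlov's lemma under (K2) and (A4)), and your order-$\epsilon$ cohomological computation showing $c_j(I)\cdot\partial_\tau\I_{F_k}^I(\tau)=0$ is a self-contained derivation of exactly the obstruction the paper imports from Theorem~3.5 of \cite{MY} in Lemma~\ref{lem:key-cvf}, followed by the same key-set/determinant contradiction. The one point you flag as a "secondary technical point" --- reducing to leading-order vector fields that are linearly independent almost everywhere at $\epsilon=0$ --- is in fact not carried out in the paper either (its proof simply assumes independence at $\epsilon=0$, with no vector-field analogue of Proposition~\ref{fcn-ind}), so you are not missing anything the authors supply.
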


\begin{rmk}\
\label{rmk:2a}
\begin{enumerate}
\setlength{\leftskip}{-2em}
\item[\rm(i)]
Theorems~\ref{thm:main-fi} and \ref{thm:main-integ} exclude even the possibility that
 the first integrals are functionally independent for $|\epsilon|\neq 0$ sufficiently small
 but not for $\epsilon=0$.
\item[\rm(ii)]
Theorem~\ref{thm:main-integ} does not exclude the possibility that
 the system \eqref{eqn:p-gsys} is real-analytically integrable
 in the Bogoyavlenskij sense near $F^{-1}(c)$
 such that in a punctured neighborhood of $\epsilon=0$
 there exist such $n-q'(\ge 0)$ first integrals and $q'$ commutative vector fields for some $q'>q$.
\end{enumerate}
\end{rmk}

We prove Theorem~\ref{thm:main-integ} in Section~\ref{pf:thm2}.

% ****************************************************************************************
% Section 2 Proofs of main theorems
% ****************************************************************************************

\section{Proofs of the main theorems}\label{pf}
In this section,
 we prove Theorems~\ref{thm:main-fi} and \ref{thm:main-integ}.
The proofs are based on the results of \cite{MY}.
Henceforth we mean real-analyticity when saying analyticity.

% ****************************************************************************************
% Proof of the 1st main theorem (first integral)
% ****************************************************************************************
\subsection{Proof of Theorem~\ref{thm:main-fi}}\label{pf:thm1}
%At first, 
We first prove Theorem~\ref{thm:main-fi}. 
Henceforth we assume that conditions~(A1)-(A3) and (K2) hold.
We begin with the following lemma.

\begin{lem}\label{erg}
Suppose that $G_1(x),\ldots,G_k(x)$ are analytic first integrals of the unperturbed system \eqref{eqn:gsys0}
 %$X^0(x)$
 near $F^{-1}(c)$ for $k\ge 1$
 and they may be functionally dependent.
Let $G(x)=(G_1(x),\ldots,G_k(x))$
 and let $\phi_t$ denote the flow generated by \eqref{eqn:gsys0}. %the unperturbed vector field $X^0$.
Then the following statements hold:
\begin{enumerate}
\setlength{\leftskip}{-1.8em}
\item[(i)]
There exists an analytic map $\psi:F(\mathcal{U})\to\Rset^k$ satisfying $G=\psi\circ F$
 in a neighborhood $\mathcal{U}$ of $F^{-1}(c)$ in $\M$.
\item[(ii)]
If $dG_1,\ldots, dG_k$ are linearly independent at $x\in\M$,
 then so are they at $\phi_t(x)$.
\end{enumerate}
\end{lem}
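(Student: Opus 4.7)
The plan is to reduce part~(i) to a Fourier-analysis argument in the action-angle coordinates supplied by Proposition~\ref{LAJ}, and to handle part~(ii) by a direct application of the chain rule to the invariance identity $G_i\circ\phi_t=G_i$.

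For (i), I would pull the first integrals back by $\varphi$ to obtain functions $\tilde G_i(I,\theta):=G_i\circ\varphi(I,\theta)$ on $U\times\Tset^q$. By Proposition~\ref{LAJ}(iic) the unperturbed vector field becomes $\dot I=0,\ \dot\theta=\omega(I)$, so being a first integral amounts to $\omega(I)\cdot\partial_\theta\tilde G_i(I,\theta)\equiv 0$. Expanding in Fourier series,
\[
\tilde G_i(I,\theta)=\sum_{r\in\Zset^q}\hat G_{i,r}(I)\exp(ir\cdot\theta),
\]
this condition gives $(r\cdot\omega(I))\hat G_{i,r}(I)\equiv 0$ for each $r\in\Zset^q$. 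For $r\neq 0$, assumption~(K2) prevents $r\cdot\omega(I)$ from vanishing identically on $U$, hence it is nonzero on an open dense set, and analyticity of $\hat G_{i,r}$ forces $\hat G_{i,r}\equiv 0$. Thus each $\tilde G_i$ depends on $I$ alone.

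To produce $\psi$, I would use Proposition~\ref{LAJ}(iib) to write $F\circ\varphi(I,\theta)=\tilde F(I)$ for an analytic $\tilde F:U\to\Rset^{n-q}$. Since $c$ is a regular value of $F$ and $D\varphi$ is everywhere invertible, the rank of $D(F\circ\varphi)(I_0,\theta)$ equals $n-q$; because this differential is supported in the $I$-directions, $\D\tilde F(I_0)$ must be invertible, so after shrinking $U$ the map $\tilde F$ is an analytic diffeomorphism onto a neighborhood of $c$. Setting $\psi:=\tilde G\circ\tilde F^{-1}$ (with $\tilde G=(\tilde G_1,\ldots,\tilde G_k)$) yields the desired analytic map with $G=\psi\circ F$ on $\mathcal{U}:=\varphi(U\times\Tset^q)$, a neighborhood of $F^{-1}(c)$.

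Part~(ii) is essentially formal: differentiating $G_i\circ\phi_t=G_i$ gives
\[
dG_i\big|_{\phi_t(x)}\circ \D\phi_t(x)=dG_i\big|_x,
\]
and since $\D\phi_t(x):T_x\M\to T_{\phi_t(x)}\M$ is a linear isomorphism, any linear dependence among $dG_1|_{\phi_t(x)},\ldots,dG_k|_{\phi_t(x)}$ pulls back to the same dependence among $dG_1|_x,\ldots,dG_k|_x$. The only genuinely nontrivial step is the Fourier-vanishing argument in (i), where (K2) combined with analyticity is precisely what is needed to kill every nonzero mode on the whole of $U$; everything else is bookkeeping with Proposition~\ref{LAJ}.
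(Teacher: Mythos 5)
Your proposal is correct and follows essentially the same route as the paper: part~(i) is the paper's argument except that you prove inline, via the Fourier-mode computation under (K2), what the paper simply cites as Lemma~1 in Section~1 of Chapter~IV of Kozlov's book, and you additionally justify the local invertibility of $\tilde{F}$, which the paper uses tacitly when writing $\psi=\tilde{G}\circ\tilde{F}^{-1}$. Part~(ii) is the paper's wedge-product argument rewritten as a direct chain-rule and linear-algebra statement using the invertibility of $(d\phi_t)_x$; the two formulations are equivalent.
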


\begin{proof}
%\red{Let $\tilde{F}=F\circ\varphi$,
% where $\varphi:\Tset^q\times U\to\mathcal{U}$ is a diffeomorphism given in Proposition~\ref{LAJ}
% and $\mathcal{U}$ is chosen sufficiently small such that it is contained in the domain of $G(x)$.
%By Proposition~\ref{LAJ} $\tilde{F}$ is a function only of $I$,
% which is regarded as an analytic diffeomorphism from $U$ to $\tilde{F}(U)$ by assumption~(A2).
%Note that $\tilde{F}(U)=F(\mathcal{U})$ contains $c$.
%On the other hand,
 %$\tilde{G}_j(\theta,I)=G_j\circ\varphi(\theta,I)$, $j=1,\ldots,{\red{k}}$, are first integrals of \eqref{eqn:aasys0}.
%Let $\tilde{G}(\theta,I)=(\tilde{G}_1(\theta,I),\ldots,\tilde{G}_k(\theta,I))$.
%By assumption~(A3),
% any orbit of \eqref{eqn:aasys0} starting at a point in $\{I=\bar{I}\}$ with $\bar{I}\in D_\N$
% is dense in the torus $\{I=\bar{I}\}$.
%Hence, $\tilde{G}_j|_{I=\bar{I}}$ is constant on $\{I=\bar{I}\}$ for $j=1,\ldots,{\red{k}}$
% since $\tilde{G}_j$ is continuous and constant along the dense orbit.
%So $\tilde{G}=(\tilde{G}_1,\ldots,\tilde{G}_k)$ is a function only of $I$
% since $D_\N$ is dense in $U$.
%Part~(i) immediately follows by transforming \eqref{eqn:gsys0} into \eqref{eqn:aasys0}
% and applying Lemma~1 in Section~1 of Chapter IV of \cite{K96}.
For the proof of part~(i)
 we first transform \eqref{eqn:gsys0} to \eqref{eqn:aasys0} with $\ell=n-q$ and $m=q$,
 based on Proposition~\ref{LAJ}.
In particular, by Proposition~\ref{LAJ}(iia)
 $F\circ\varphi(I,\theta)$ depends only on $I$.
Let $\tilde{F}(I)=F\circ\varphi(I,\theta)$.
On the other hand, we use Lemma~1 in Section~1 of Chapter IV of \cite{K96}
 to show that $G(\varphi(I,\theta))$ depends only on $I$.
Let $\tilde{G}(I)=G(\varphi(I,\theta))$.
So we have
\[
G=\tilde{G}\circ\varphi^{-1}=\tilde{G}\circ\tilde{F}^{-1}\circ\tilde{F}\circ\varphi^{-1}
\]
and take $\psi=\tilde{G}\circ\tilde{F}^{-1}$ to obtain part~(i).

We turn to the proof of part~(ii).
Since the pull-back of  $G_j$ by $\phi_t$ satisfies $\phi_t^* G_j=G_j$ for $j=1,\ldots,k$, we have
\begin{align*}
&
\phi_t^*(dG_1\wedge \cdots \wedge dG_k)_x
=(\phi_t^*dG_1\wedge\cdots\wedge\phi_t^*dG_k)_x\\
&
=\left(d(\phi_t^*G_1)\wedge \cdots \wedge d(\phi_t^*G_k)\right)_x
=(dG_1\wedge \cdots \wedge dG_k)_x\neq0,
\end{align*}
where `$\wedge$' represents the wedge product.
On the other hand, by the definition of pull-back
\begin{align*}
\phi_t^*(dG_1\wedge \ldots \wedge dG_k)_x 
=(dG_1\wedge \ldots \wedge dG_k)_{\phi_t(x)}\circ(d\phi_t)_x.
\end{align*}
Since $\phi^t$ is a diffeomorphism,
 we see that $(d\phi_t)_x$ is regular.
 Hence,
\[
(dG_1\wedge \ldots \wedge dG_k)_{\phi_t(x)}\neq 0,
\]
which yields part (ii).
\end{proof}

%Lemma~\ref{lem:lin-ind} means the linearly independence of first integrals $(G_1, \ldots, G_{n-q})$ of $X^0$ on the torus $\{I=I^0\}$
%is equivalent to the linearly independence at a point of the torus $\{I=I^0\}$.
%
%For Proposition~\ref{prop:key}, we refer following result on the persistence of first integrals
%for perturbed systems. See \cite{MY} for the proof and its details.

Using Theorem~2.2 of  \cite{MY} on persistence of first integrals and Lemma~\ref{erg},
 we obtain the following result.

\begin{lem}\label{lem:key}
Let $I\in D_\R$.
Suppose that near $\epsilon=0$
 the perturbed system \eqref{eqn:p-gsys} has $n-q$ analytic first integrals
 $G_1^\epsilon, \ldots, G_{n-q}^\epsilon$
 near $\mathscr{T}_I=\{\gamma^{I}_\tau\mid\tau\in\Tset^q\}$ in $\M$
 such that they are functionally independent on $\mathscr{T}_I$
 and depend analytically on $\epsilon$.
Then $\I_F^{I}(\tau)$ must be identically zero.
\end{lem}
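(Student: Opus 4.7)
The plan is to extract the $\epsilon^1$-content of the invariance identities $dG_k^\epsilon(X_\epsilon) \equiv 0$, integrate it around the unperturbed periodic orbit $\gamma^I_\tau$, and read off a linear system for the components of $\I_F^I(\tau)$. I would start by writing $G_k^\epsilon = G_k^0 + \epsilon G_k^1 + O(\epsilon^2)$ with each coefficient analytic on a neighborhood of $\mathscr{T}_I$ in $\M$. Since $G_k^0$ is a first integral of $X^0$ there, Lemma~\ref{erg}(i) supplies analytic scalar functions $\psi_k: F(\mathcal{U}) \to \Rset$ with $G_k^0 = \psi_k \circ F$, and I would package these as $\psi = (\psi_1,\ldots,\psi_{n-q})$. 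This factorization is the structural ingredient that lets the period integral close up into a linear system.

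Matching $\epsilon^1$-coefficients in $d(G_k^0+\epsilon G_k^1+\cdots)(X^0+\epsilon X^1+\cdots) = 0$ gives $dG_k^0(X^1) + dG_k^1(X^0) = 0$. Restricting this identity to $\gamma^I_\tau(t)$ and integrating over one period $T^I$, the $dG_k^1(X^0)$-term is the total $t$-derivative $\tfrac{d}{dt}(G_k^1 \circ \gamma^I_\tau)$ because $X^0$ is the tangent vector field of $\gamma^I_\tau$, so by periodicity its integral vanishes. For the other term I would apply the chain rule $dG_k^0 = \sum_j (\partial_j\psi_k)(F)\,dF_j$ and invoke Proposition~\ref{LAJ}(iib), which says $F \equiv \tilde F(I)$ on $\gamma^I_\tau$; this makes the coefficients $\partial_j\psi_k(\tilde F(I))$ constant in $t$ and lets them be pulled outside the integral. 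The outcome is the linear system
\[
D\psi\bigl(\tilde F(I)\bigr)\,\I_F^I(\tau)^\top = 0,\qquad \tau \in \Tset^q.
\]

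It remains to argue that $D\psi(\tilde F(I))$ is invertible, after which $\I_F^I \equiv 0$ follows at once. Because $c$ is a regular value of $F$, the differentials $dF_1,\ldots,dF_{n-q}$ are linearly independent on $F^{-1}(c) \supset \mathscr{T}_I$, so invertibility of $D\psi(\tilde F(I))$ is equivalent to linear independence of $dG_1^0,\ldots,dG_{n-q}^0$ at one point of $\mathscr{T}_I$ (equivalently, by Lemma~\ref{erg}(ii), at every point of $\mathscr{T}_I$). The hypothesis of functional independence of $G_1^\epsilon,\ldots,G_{n-q}^\epsilon$ on $\mathscr{T}_I$, evaluated at $\epsilon = 0$, delivers precisely this. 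The step I expect to demand the most attention is this rank argument: one has to make explicit that the analyticity in $\epsilon$ of the family forces rank $(n-q)$ of $D\psi^\epsilon|_{\epsilon=0}$ under the stated hypothesis, and, to interface properly with Remark~\ref{rmk:2a}(i) and Theorem~2.2 of \cite{MY}, to formulate the hypothesis so that the pathological scenario of rank dropping only at $\epsilon = 0$ is either excluded in the lemma or absorbed into the subsequent theorems.
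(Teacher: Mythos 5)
Your proposal is correct and follows essentially the same route as the paper: establish $\int_0^{T^I} dG^0(X^1)_{\gamma^I_\tau(t)}\,dt=0$, factor $G^0=\psi\circ F$ via Lemma~\ref{erg}(i), use that $F$ is constant along $\gamma^I_\tau$ to pull out $d\psi$, and conclude from $\rank d\psi=n-q$ (which follows from the functional independence of the $G_k^0$ on $\mathscr{T}_I$ together with the regularity of $F$). The only difference is that you derive the vanishing of the period integral directly from the first-order term of $dG_k^\epsilon(X_\epsilon)=0$ plus periodicity, whereas the paper simply cites Theorem~2.2 of \cite{MY} for this fact; your closing remark about the rank possibly dropping only at $\epsilon=0$ is indeed handled outside the lemma, via Proposition~\ref{fcn-ind} in the proof of Theorem~\ref{thm:main-fi}.
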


\begin{proof}
Assume that the hypothesis of the lemma holds.
Using Theorem~2.2 of \cite{MY}, we have
\begin{align*}
\I_{G^0}^{I}(\tau)&=\int_0^{T^{I}} dG^0(X^1)_{\gamma_\tau ^{I}(t)} dt=0.
\end{align*}
On the other hand, by Lemma~\ref{erg},
 $G^0:=(G_1^0, \ldots, G_{n-q}^0)$ is expressed as $G^0=\psi\circ F$
 for some analytic map $\psi:F(\mathcal{U})\to\Rset^{n-q}$,
 where $\mathcal{U}$ is a neighborhood of $F^{-1}(c)$.
Since $F$ is constant along the periodic orbit $\gamma^{I}_\tau(t)$,  we have
 $dG^0_{\gamma^{I}_\tau (t)}=d\psi_{F(\gamma^{I}_\tau (0))}dF_{\gamma^{I}_\tau(t)}$, so that
\begin{align*}
\I_{G^0}^{I}(\tau)%=\int_0^{T^{I}} dG^0(X^1)_{\gamma_\tau ^{I}(t)} dt\\
=\int_0^{T^{I}} d\psi_{F(\gamma_\tau ^{I}(0))}dF_{\gamma_\tau (t)}(X_{\gamma_\tau (t)})dt
=d\psi_{F(\gamma_\tau ^{I}(0))}\I_{F}^{I}(\tau).
\end{align*}
Since $dG^0$ and $dF$ have the maximum rank on $\gamma^{I}_\tau(t)$
 so that $\rank d\psi_{F(\gamma^{I}_\tau (0))}=n-q$,
 %by the regularity of $G^0$ and $F$ on $\gamma^{I}_\tau(t)$,
 we obtain $\I_{F}^{I}(\tau)=0$ for any $\tau\in\Tset^q$.
% by the invertibility of $d\psi_{F(\gamma^{I}_\tau (0))}$.
\end{proof}

For the proof of Theorem~\ref{thm:main-fi}
 we also need the following result (see Appendix~A for its proof).

\begin{prop}\label{fcn-ind}
Let $k\le n-q$ be a positive integer.
Suppose that in a neighborhood of $F^{-1}(c)$
 the perturbed system \eqref{eqn:p-gsys} has $k$ first integrals
 that are analytic in $(x,\epsilon)$ near $\epsilon=0$.
If they are functionally independent for $\epsilon\neq 0$,
 then in a neighborhood of $F^{-1}(c)$
 there exist $k$ first integrals that are analytic in $(x,\epsilon)$ %near $\epsilon=0$
 and functionally independent near $\epsilon=0$.
\end{prop}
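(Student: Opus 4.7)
The plan is to prove Proposition~\ref{fcn-ind} by an iterative algebraic reduction which, at each step, replaces the given $k$-tuple of first integrals by a new analytic $k$-tuple that is still functionally independent for $\epsilon\ne 0$ but whose Jacobian acquires an additional unit of rank at $\epsilon = 0$. A single integer invariant strictly decreases at each step, forcing termination in finitely many iterations.

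If $G_1^0,\ldots,G_k^0$ are already functionally independent near $F^{-1}(c)$, there is nothing to prove. Otherwise, let $r<k$ be the maximal rank of $dG^0 = d(G_1^0,\ldots,G_k^0)$ attained on a neighborhood of $F^{-1}(c)$; by lower semicontinuity this maximum is attained on a dense open set, where after reordering $G_1^0,\ldots,G_r^0$ are functionally independent. By Lemma~\ref{erg}(i) each $G_j^0$ factors as $\psi_j\circ F$, and a constant-rank argument applied to the induced analytic map $\Psi = (\psi_1,\ldots,\psi_k)$ produces an analytic function $\phi$, defined on a neighborhood of the image of $(G_1^0,\ldots,G_r^0)$, such that
\begin{equation*}
G_{r+1}^0 = \phi(G_1^0,\ldots,G_r^0)
\end{equation*}
on a suitable sub-neighborhood in $\mathcal{U}$.

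I then set
\begin{equation*}
\tilde G_{r+1}^\epsilon := \frac{1}{\epsilon}\bigl(G_{r+1}^\epsilon - \phi(G_1^\epsilon,\ldots,G_r^\epsilon)\bigr),
\end{equation*}
which is analytic in $(x,\epsilon)$ because the numerator vanishes identically at $\epsilon = 0$, and is a first integral of $X_\epsilon$ because $\epsilon$ is a parameter. Replacing $G_{r+1}^\epsilon$ by $\tilde G_{r+1}^\epsilon$ in the $k$-tuple amounts to a lower-triangular change of coordinates with diagonal $(1,\ldots,1,\epsilon^{-1},1,\ldots,1)$, so the new $k$-tuple is still functionally independent for $\epsilon\ne 0$. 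To bound the number of iterations, fix a $k$-subset $I$ of coordinates for which the minor $J^\epsilon := \det(\partial G^\epsilon/\partial x_I)$ is not identically zero; such an $I$ exists by functional independence off $\epsilon = 0$. Write $J^\epsilon = \epsilon^{m} J_*(x,\epsilon)$ with $J_*(x,0)\not\equiv 0$. Elementary row operations (subtracting multiples of the first $r$ rows from row $r+1$) identify the corresponding minor for the new $k$-tuple with $\epsilon^{-1}J^\epsilon = \epsilon^{m-1}J_*$; the required divisibility $m\ge 1$ holds because $\partial G^0/\partial x_I$ has rank $\le r < k$ so that $J^\epsilon$ vanishes at $\epsilon = 0$. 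Thus $m$ strictly decreases, and after at most $m$ steps the chosen $k$-minor is nonzero at some point of $\{\epsilon = 0\}$, giving functional independence there and, by continuity, on a neighborhood of $F^{-1}(c)$.

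The main obstacle I anticipate is the construction and global validity of the analytic function $\phi$: a priori the constant-rank reduction is only available near a generic point of $F^{-1}(c)$, whereas the proposition demands first integrals on a full neighborhood. I would overcome this by carrying out the reduction on a small neighborhood of a point $x_0\in F^{-1}(c)$ where $dG^0$ attains its maximal rank, producing the local first integral $\tilde G_{r+1}^\epsilon$, and then extending it to a full neighborhood of $F^{-1}(c)$ by analytic continuation along the flow of $X_\epsilon$. Invariance makes the continuation single-valued, and the connectedness and compactness of $F^{-1}(c)$, together with the fact that $X_\epsilon$ is a small analytic perturbation of $X^0$, ensure that the extension covers all of $F^{-1}(c)$ and remains analytic jointly in $(x,\epsilon)$.
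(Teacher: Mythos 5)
Your argument is correct and is essentially the paper's own proof, which is a modification of Ziglin's lemma: both produce, from a functional relation among the $G_j^0$, a new analytic first integral by dividing by $\epsilon$, and both show that the $\epsilon$-order of the top wedge $dG_1^\epsilon\wedge\cdots\wedge dG_k^\epsilon$ (equivalently, of a chosen $k$-minor) drops by exactly one at each step, forcing termination. The differences are cosmetic: you solve the relation explicitly as $G_{r+1}^0=\phi(G_1^0,\ldots,G_r^0)$ via the constant-rank theorem, which makes the nonvanishing of the $y_{r+1}$-derivative automatic, whereas the paper works with an implicit relation $\zeta(G^0)=0$ (its Lemma~\ref{lem:rank}) and must differentiate $\zeta$ in $y_k$ until that nonvanishing is achieved, wrapping the descent in an induction on $k$; and the globality concern you raise about the domain of $\phi$ is equally present in the paper's proof (its $\zeta$ is just as local) and is innocuous here because functional independence is an almost-everywhere condition, so it suffices to establish nonvanishing of the wedge on the tube $\varphi(V\times\Tset^q)$ on which the modified integrals are defined.
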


\begin{rmk}
\label{rmk:3a}
From Proposition~\ref{fcn-ind} we also see that
 the condition for $\D F_0(I)$ to have a maximum rank at a point $I_0\in\Rset^\ell$
 was unnecessary in Theorem~\ref{thm:Kozlov}.
\end{rmk}

\begin{proof}[Proof of Theorem~\ref{thm:main-fi}]
Suppose that $\I_{F}^{I}(\tau)$ is not identically zero for any $I\in D$.
Using Lemma~\ref{lem:key} for each $I\in D$, we see that
 the perturbed system \eqref{eqn:p-gsys} does not have $n-q$ analytic first integrals
 near $\mathscr{T}_I$ %$\gamma^{I}_\tau$
 such that they are functionally independent on $\mathscr{T}_I$ %$\gamma^{I}_\tau$
 and depend analytically on $\epsilon$ near $\epsilon=0$.

Additionally, suppose that
 there are $n-q$ analytic first integrals %$G^\epsilon_1, \ldots, G^\epsilon_{n-q}$ 
 such that they are functionally independent for $|\epsilon|\neq 0$ sufficiently small but not at $\epsilon=0$
 and depend analytically on $\epsilon$ near $\epsilon=0$.
Then by Proposition~\ref{fcn-ind}
 there exist $n-q$ analytic first integrals $G_1^\epsilon, \ldots, G_{n-q}^\epsilon$
 which are functionally independent and depend analytically on $\epsilon$ near $\epsilon=0$.
 Hence, $dG_1^0, \ldots, dG_{n-q}^0$ are linearly dependent on $\mathscr{T}_I$ for $I\in D$.
As in the proof of Lemma~\ref{erg} (i),
 we consider the transformed system \eqref{eqn:aasys0}
 and write $\tilde{G}_j(I)=G_j(\varphi(I,\theta))$ for $ j=1,\dots n-q$.
 Let $\tilde{G}(I)=(\tilde{G}_1(I),\ldots,\tilde{G}_{n-q}(I))$.
We see that the determinant of the Jacobi matrix of $\tilde{G}(I)$ is zero for $I\in D$,
 so that it is identically zero on $U$ since $D$ is a key set for $C^\omega(U)$.
 This yields a contradiction.
%\red{
%Hence, $dG_1^0, \ldots, dG_{n-q}^0$ are linearly independent on $\mathscr{T}_I$ for $I\in D$
% since if not, then they are linearly dependent near $\mathscr{T}_I$ for $I\in U$.}
%Thus, we have
%\[
%\det d(G_1^0(I), \ldots, G_{n-q}^0(I))=0
%\]
%in the transformed system \eqref{eqn:aasys0} for $I\in D$,
% so that it is identically zero on $U$ since $D$ is a key set for $C^\omega(U)$.
 %$dG_1^0, \ldots, dG_{n-q}^0$ are linearly independent almost everywhere.
\end{proof}

% ****************************************************************************************
% Proof of the 3rd main theorem (integrability)
% ****************************************************************************************
\subsection{Proof of Theorem~\ref{thm:main-integ}}\label{pf:thm2}
We turn to the proof of Theorem~\ref{thm:main-integ}.
Henceforth, we assume that conditions~(A1)-(A4) and (K2) hold.

We begin with the following lemma.
Recall that $Y_l(x)$, $l=1,\ldots,q$, are commutative vector fields
 for the unperturbed system \eqref{eqn:gsys0}.

\begin{lem}\label{lem:cvf}
An analytic  vector field $Z(x)$ commutes with the vector field $X^0(x)$
if and only if it can be written as $Z(x)=\sum_{l=1}^q\rho_l(F(x))Y_l(x)$,
 where $\rho_l:\Rset^{n-q}\to\Rset$, $l=1,\ldots,n-q$, are analytic.
In particular, the vector field $X^0(x)$ has only $q$ commutative vector fields
which are linearly independent almost everywhere.
\end{lem}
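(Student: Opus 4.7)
The ``if'' direction is a one-line computation: for $Z=\sum_{l=1}^q \rho_l(F(x))Y_l(x)$,
\[
[Z,X^0]=\sum_l\bigl(\rho_l(F)\,[Y_l,X^0]-(X^0(\rho_l\circ F))\,Y_l\bigr)=0,
\]
since $[Y_l,X^0]=0$ by (A1) and $X^0(\rho_l\circ F)=0$ because each $F_k$ is a first integral of $X^0$.

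For the ``only if'' direction, my plan is to work in the action-angle coordinates $(I,\theta)\in U\times\Tset^q$ supplied by Proposition~\ref{LAJ}, in which $X^0=\sum_j\omega_j(I)\partial_{\theta_j}$ and $F\circ\varphi$ depends only on $I$ with $\partial F/\partial I$ of full rank $n-q$. I would first pin down the shape of the $Y_l$'s. Writing $Y_l=\sum_k A_{l,k}(I,\theta)\partial_{I_k}+\sum_j B_{l,j}(I,\theta)\partial_{\theta_j}$, the identities $Y_l F_k=0$ combined with the invertibility of $\partial F/\partial I$ force $A_{l,k}\equiv 0$; then $[Y_l,X^0]=0$ reduces to $X^0(B_{l,j})=0$, and Lemma~1 of Ch.~IV of \cite{K96} (using (K2)) yields $B_{l,j}=B_{l,j}(I)$. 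Assumption (A2) ensures that the resulting $q\times q$ matrix $[B_{l,j}(I)]$ is invertible on a neighborhood of $I_0$.

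Next, for a general analytic $Z=\sum_k A_k(I,\theta)\partial_{I_k}+\sum_j B_j(I,\theta)\partial_{\theta_j}$ with $[Z,X^0]=0$, the $\partial_{I_k}$-component of the commutator gives $X^0(A_k)=0$, so $A_k=A_k(I)$ by (K2). The $\partial_{\theta_j}$-component then reads
\[
X^0(B_j)=\sum_k A_k(I)\,\partial_{I_k}\omega_j(I);
\]
since the right-hand side is independent of $\theta$ while the left-hand side has zero $\theta$-mean, both sides must vanish, yielding $\D\omega(I)\cdot A(I)=0$ on $U$. Assumption (A4), openness of maximal rank, and analyticity then propagate $A(I^*)=0$ to $A\equiv 0$ on the connected chart $U$. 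Consequently $X^0(B_j)=0$, so $B_j=B_j(I)$ by (K2), and invertibility of $[B_{l,j}(I)]$ lets me solve $B_j(I)=\sum_l \rho_l(I)B_{l,j}(I)$ uniquely with analytic $\rho_l(I)$. Since $F$ restricts to an analytic diffeomorphism from $U$ onto $F(U)$, each $\rho_l$ rewrites as an analytic function of $F$, completing the decomposition. The final assertion is then immediate: any commutative vector field lies in the rank-$q$ module generated by $Y_1,\ldots,Y_q$ over analytic functions of $F$, so at most $q$ of them can be linearly independent on a dense set.

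The main technical obstacle will be the step upgrading $A(I^*)=0$ to $A\equiv 0$ on $U$. It relies on lower semicontinuity of rank (so $\D\omega$ remains of rank $n-q$ on an open neighborhood of $I^*$, forcing $A$ to vanish there) followed by analytic continuation to the rest of $U$. A minor subtlety is that the point $I^*$ in (A4) need not equal $I_0$ from (A2); this causes no issue since $\D\omega(I)\cdot A(I)=0$ was derived as an identity on the entire action-angle chart.
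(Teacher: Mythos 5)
Your proof is correct and follows essentially the same route as the paper: reduce to action--angle coordinates via Proposition~\ref{LAJ} and show that any vector field commuting with $\sum_j\omega_j(I)\,\partial/\partial\theta_j$ must have the normal form \eqref{eqn:cvf}. The only difference is that the paper delegates the necessity step to Lemma~1 in Section~3 of Chapter~IV of \cite{K96}, invoked under (K2) and (A4), whereas you prove that statement directly (zero $\theta$-mean of $X^0(B_j)$ forcing $\D\omega(I)A(I)=0$, then (A4) plus analytic continuation to kill $A$); the details you supply are sound.
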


\begin{proof}
By Proposition~\ref{LAJ}, we may consider \eqref{eqn:aasys0}.
So we prove that it is necessary and sufficient for an analytic vector field $Z(I, \theta)$
 to commute with the vector field of \eqref{eqn:aasys0} that it can be written as
\begin{equation}
Z(I,\theta)=\sum_{l=1}^q\rho_l(I) \frac{\partial}{\partial \theta_l}.
\label{eqn:cvf}
\end{equation}
The sufficiency is obvious.
The necessity follows from assumptions (K2) and (A4)
 by Lemma~1 in Section~3 of Chapter~IV of \cite{K96}.
%Let $Z(I,\theta)=(Z_{I}(I,\theta),Z_\theta(I,\theta))$.
%The commutativity of $Z$ and $X^0$ means
%\[
%\left(
%    \begin{array}{cc}
%       D_I Z_I (I,\theta) & D_\theta Z_I (I,\theta)\\
%       D_I Z_\theta (I,\theta) & D_\theta Z_\theta (I,\theta)
%    \end{array}
%  \right)\left(
%    \begin{array}{c}
%     0 \\
%       \omega(I)
%    \end{array}
%  \right)
% -\left(
%    \begin{array}{cc}
%       0 & 0\\
%       D_I \omega(I) & 0
%    \end{array}
%  \right)\left(
%    \begin{array}{c}
%      Z_I(I,\theta) \\
%      Z_\theta(I,\theta)
%    \end{array}
%  \right)
%  =0,
%\]
%that is,
%\begin{align}
%(D_\theta Z_I) \omega(I)&=0 \label{eq:cvf1}\\
%(D_\theta Z_\theta) \omega(I)&=(D_I\omega) Z_I\label{eq:cvf2}.
%\end{align}
%By mltipling $e^{i\langle \alpha, \theta\rangle}$ and integrating,
%we have
%When $\alpha\neq 0$, by  (A3), 
\end{proof}
%The following proposition may be proved in a similar way to Proposition~\ref{fcn-ind}.

Using Theorem~3.5 of \cite{MY} on persistence of commutative vector fields and Lemma~\ref{lem:cvf},
 we obtain the following.

\begin{lem}\label{lem:key-cvf}
Let $I\in D_\R$.
Suppose that near $\epsilon=0$
 the perturbed system \eqref{eqn:p-gsys} has $q$ analytic commutative vector fields
 $Z_1^\epsilon(x), \ldots, Z_{q}^\epsilon(x)$
 near $\mathscr{T}_I=\{\gamma^{I}_\tau\mid\tau\in\Tset^q\}$ in $\M$
 such that they are linearly independent on $\mathscr{T}_I$
 and depend analytically on $\epsilon$.
Then $\I_F^{I}(\tau)$ must be constant.
\end{lem}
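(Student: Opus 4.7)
The plan is to mirror the proof of Lemma~\ref{lem:key}, replacing Theorem~2.2 of~\cite{MY} (persistence of first integrals) by Theorem~3.5 of~\cite{MY} (persistence of commutative vector fields), and replacing Lemma~\ref{erg} by Lemma~\ref{lem:cvf}. By analyticity of $Z_j^\epsilon$ in $\epsilon$, the unperturbed limits $Z_j^0(x):=Z_j^\epsilon(x)|_{\epsilon=0}$, $j=1,\ldots,q$, commute with $X^0$ and are linearly independent on $\mathscr{T}_I$. Applying Lemma~\ref{lem:cvf} yields analytic functions $\rho_l^j:\Rset^{n-q}\to\Rset$ with
\begin{equation*}
Z_j^0(x)=\sum_{l=1}^q \rho_l^j(F(x))\,Y_l(x),\quad j=1,\ldots,q.
\end{equation*}
Since $F\equiv c$ on $\mathscr{T}_I$ and both $\{Y_l\}$ and $\{Z_j^0\}$ are linearly independent there, the constant matrix $R:=(\rho_l^j(c))_{j,l}$ is invertible.

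Next, Theorem~3.5 of~\cite{MY} supplies, for each $Z_j^\epsilon$ that persists analytically, the vanishing condition
\begin{equation*}
\int_0^{T^I} dF_k\bigl([X^1,Z_j^0]\bigr)_{\gamma^I_\tau(t)}\,dt=0,\quad k=1,\ldots,n-q,\ j=1,\ldots,q,
\end{equation*}
along each unperturbed periodic orbit $\gamma^I_\tau$. The Lie-derivative identity gives
\begin{equation*}
dF_k\bigl([X^1,Z_j^0]\bigr)=-\mathcal{L}_{Z_j^0}\bigl(dF_k(X^1)\bigr)+d\bigl(\mathcal{L}_{Z_j^0}F_k\bigr)(X^1),
\end{equation*}
whose last term vanishes because $\mathcal{L}_{Z_j^0}F_k=\sum_l\rho_l^j(F)\,dF_k(Y_l)=0$ by condition~(ii) of Definition~\ref{dfn:1}. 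Passing to the action-angle coordinates of Proposition~\ref{LAJ}, we have $Y_l\leftrightarrow\partial/\partial\theta_l$, and along $\gamma^I_\tau$ the scalar $F$ is constant, so the coefficients $\rho_l^j(F)$ reduce to the constants $\rho_l^j(c)$; therefore
\begin{equation*}
\int_0^{T^I}\mathcal{L}_{Z_j^0}\bigl(dF_k(X^1)\bigr)_{\gamma^I_\tau(t)}\,dt
=\sum_{l=1}^q\rho_l^j(c)\,\frac{\partial \I_{F_k}^I}{\partial\tau_l}(\tau).
\end{equation*}

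Combining the last three displays shows that $R$ annihilates the vector $\bigl(\partial_{\tau_l}\I_{F_k}^I(\tau)\bigr)_l$ for every $k$; invertibility of $R$ then forces $\partial_{\tau_l}\I_{F_k}^I\equiv 0$ on $\Tset^q$ for all $l$ and $k$, so $\I_F^I(\tau)$ is constant as asserted. The main obstacle is extracting from Theorem~3.5 of~\cite{MY} the integral condition in precisely the form used above: if it is phrased intrinsically only modulo $X^0$ and $Y_2,\ldots,Y_q$, one must additionally invoke $dF_k(X^0)\equiv 0$ and $dF_k(Y_l)\equiv 0$ to collapse this ambiguity before pairing with $dF_k$; a secondary care-point is justifying that the pointwise linear independence of $Z_j^\epsilon$ on $\mathscr{T}_I$ transfers to $Z_j^0$ so that Lemma~\ref{lem:cvf} applies, which follows from analyticity in $\epsilon$ by letting $\epsilon\to 0$.
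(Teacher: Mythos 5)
Your proposal is correct and follows essentially the same route as the paper: express $Z_j^0$ via Lemma~\ref{lem:cvf}, invoke Theorem~3.5 of \cite{MY} for the vanishing of the bracket integrals, and use linear independence of $Z_1^0,\ldots,Z_q^0$ on $\mathscr{T}_I$ to invert the coefficient matrix $(\rho_{jl})$ and kill the $\tau$-derivatives of the integral. The only (cosmetic) difference is that the paper pairs the bracket with $dI$ in action--angle coordinates and then converts the constancy of $\I_I^I$ into that of $\I_F^I$ via the map $\psi$ from Lemma~\ref{erg}, whereas you pair directly with $dF_k$; also note that your closing worry is moot, since the hypothesis already asserts linear independence on $\mathscr{T}_I$ at $\epsilon=0$ (a limit argument would in any case only preserve linear \emph{dependence}, not independence).
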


\begin{proof}
We first transform \eqref{eqn:p-gsys} to \eqref{eqn:aasys} with $\ell=n-q$ and $m=q$,
 as in the proof of Lemma~\ref{erg}.
Assume that the hypothesis of the lemma holds.
Then by Lemma~\ref{lem:cvf}, %we can assume that 
 $Z^0_j(I,\theta)$ has the form \eqref{eqn:cvf} with $\rho_l(I)=\rho_{jl}(I)$, $l=1,\ldots,q$,
  for $ j=1,\dots q$.
Let
\begin{align*}
\tilde{X}^1(I,\theta)=\sum_{j=1}^\ell h_j(I,\theta;0)\frac{\partial}{\partial I_j}
				+\sum_{k=1}^m g_k(I,\theta;0)\frac{\partial}{\partial \theta_k}
\end{align*}
where $h_j(I, \theta;\epsilon)$ (resp. $g_k(I, \theta;\epsilon)$) is $j$th (resp. $k$th) component of $h(I, \theta;\epsilon)$ (resp. $g(I, \theta;\epsilon)$) for $j=1,\ldots,\ell$ (for $k=1,\ldots,m$).
Using Theorem~3.5 of \cite{MY}, we have
\begin{align*}
0 =&  \int_0^{T^{I}} dI\left(\left[Z_j^0(I,\theta),
%\begin{pmatrix}
%h(I,\theta)\\
%g(I,\theta)
%\end{pmatrix}
\tilde{X}^1(I,\theta)\right]\right)_{\theta=\omega(I)t+\tau}dt\\
=&\sum_{l=1}^q\rho_{jl}(I)\left(\int_0^{T^{I}} \frac{\partial h}{\partial \theta_l}(I, \omega(I)t+\tau; 0)dt\right)
=\sum_{l=1}^q\rho_{jl}(I)\frac{d\I_I^I}{d\tau_l}(\tau).
\end{align*}
Since $Z_1^0,\dots,Z_{q}^0$ are linearly independent on $\mathscr{T}_I$,
 the $q\times q$ matrix $(\rho_{jl})_{j,l=,1,\ldots,q}$ is invertible,
 so that by $n-q\le q$, $(d \I_I^I/d \tau)(\tau)=O$, i.e., $\I_I^I(\tau)$ is constant.

On the other hand, by Lemma~\ref{erg},
 there exists an analytic map $\psi:F(\mathcal{U})\to\Rset^k$ such that $I=\psi\circ F(x)$.
As in the proof of Lemma~\ref{lem:key},
 we show that $\rank d\psi_{F(\gamma^I(0))}=n-q$
 and $\I_I^I(\tau)=d\psi_{F(\gamma^I(0))}\I_F^I(\tau)$.
Hence, $\I_F^I(\tau)$ is also constant.
\end{proof}

\begin{proof}[Proof of Theorem~\ref{thm:main-integ}]
Suppose that $\I_{F}^{I}(\tau)$ is not constant for any $I\in D$.
From Theorem~\ref{thm:main-fi} we see that there exist only $n-q-1$ first integrals at most
 such that they are functionally independent and 
 depend analytically on $\epsilon$ near $\epsilon=0$.
 
Assume that there exist $q$ analytic commutative vector fields
 $Z_1^\epsilon, \ldots,Z_q^\epsilon$
 such that for $\epsilon=0$ they are linearly independent almost everywhere
 and depend analytically on $\epsilon$.
Applying Lemma~\ref{lem:key-cvf} for each $I\in D$, we see that
 the perturbed system \eqref{eqn:p-gsys} does not have $q$ analytic commutative vector fields
 near $\mathscr{T}_I$ %$\gamma^{I}_\tau$
 such that they are linearly independent on $\mathscr{T}_I$ %$\gamma^{I}_\tau$
 and depend analytically on $\epsilon$ near $\epsilon=0$.
Hence, $Z_1^\epsilon,\ldots,Z_{q}^\epsilon$ are linearly dependent
 on $\mathscr{T}_I$ for $I\in D$ at $\epsilon=0$
 if they depend analytically on $\epsilon$ near $\epsilon=0$.
As in the proof of Lemma~\ref{lem:key-cvf},
 we consider the transformed system \eqref{eqn:aasys0}
 and use Lemma~\ref{lem:cvf}
 to write $Z^0_j(I,\theta)$ in the form \eqref{eqn:cvf} with $\rho_l(I)=\rho_{jl}(I)$, $l=1,\ldots,q$,
 for $ j=1,\dots q$.
We see that the determinant of the matrix $(\rho_{jl}(I))_{j,l=,1,\ldots,q}$ is zero for $I\in D$,
 so that it is identically zero on $U$ since $D$ is a key set for $C^\omega(U)$.
This yields a contradiction.
So we obtain the desired result.
\end{proof}

% ****************************************************************************************
% Section 4 ActionAngle
% ****************************************************************************************

\section{Consequences of the Theory to \eqref{eqn:aasys}}\label{inAA}
In this section, we consider nearly integrable systems of the form \eqref{eqn:aasys}
 written in the action-angle coordinates
 and describe consequences of Theorems~\ref{thm:main-fi} and \ref{thm:main-integ} to it.
The unperturbed system~\eqref{eqn:aasys0} is $(m,\ell)$-integrable
 in the Bogoyavlenskij sense
 and has $\ell$ first integrals $I_1,\ldots,I_{\ell}$ and $m$ commutative vector fields
$\omega(I)\frac{\partial}{\partial \theta_1}, \frac{\partial}{\partial \theta_2}, \ldots, \frac{\partial}{\partial \theta_m}$.
% $(0,\omega(I)),(0,e_1), \ldots,(0,e_{m-1})\in\Rset^\ell\times\Rset^m$,
 %where $e_j$ is the $m$-dimensional vector
 %of which the $j$th element is the unit and the other elements are zero.
Thus, conditions~(A1) and (A2) with $n=\ell+m$ and $q=m$ already hold.
In particular, the level set of $I=c$ given by $\{c\}\times\Tset^m$
 is connected and compact.
Take some $I_0\in\Rset^\ell$
 and let $U$ be its neighborhood in $\Rset^\ell$, as in the preceding sections.

We first discuss consequences of Theorem~\ref{thm:main-fi} to \eqref{eqn:aasys}
 and assume that conditions~(K2) and (A3) with $n=\ell+m$ and $q=m$ hold.
For $I\in D_\R$ the unperturbed system \eqref{eqn:aasys0}
 has an $m$-parameter family of periodic orbits given by \eqref{eqn:po} with $q=m$.
The integrals given by \eqref{eqn:i} for the $\ell$ first integrals $I=(I_1,\ldots,I_{\ell})$ become
\begin{align*}
\I_I^I(\tau)
%=&\int_0^{T^{\bar{I}}}\D_{(I,\theta)}I \cdot
%\left(\begin{array}{c}
%      h(I,\rho_I\omega_0(I)t+(0,\tau);0)\\
%      g(I,\rho_I\omega_0(I)t+(0,\tau);0)
%    \end{array}
%  \right)dt
 =\int_0^{T^{\bar{I}}} h(I ,\omega(I)t+\tau;0)dt,
%\label{intAA}
\end{align*}
where $\tau\in\Tset^m$.
%Now we give a relation between the integral \eqref{intAA} and the Fourier coefficients of $h$.

Assume that $m>1$.
Using the Fourier expansion of $h(I,\theta;0)$ given in \eqref{eqn:hath},
 we rewrite the above integral as
\begin{align}
\I_I^I(\tau)
=&\int_0^{T^{\bar{I}}}\sum_{r\in \Zset^m}
 \hat{h}_r(I)\exp(i r\cdot(\omega(I)t+\tau))d t
%=\sum_{r\in \Zset^m} \hat{h}_r(I) \int_0^{T^I}e^{i r\cdot (\omega(I)t+\tau)}
= T^I\sum_{r\in \Lambda_I}\hat{h}_r(I)e^{i r\cdot\tau},
\label{eqn:int-FT}
\end{align}
where $\Lambda_I=\{r\in\Zset^m\mid r\cdot\omega(I)=0\}$.
Applying Theorem~\ref{thm:main-fi},
 we obtain the following result for \eqref{eqn:aasys}.
Recall that $\hat{h}_r(I)$, $r\in\Zset^m$, represent the Fourier coefficients of $h(I,\theta;0)$
 (see Eq.~\eqref{eqn:hath}).
 
\begin{thm}\label{thm:4a}
Let $m>1$, and suppose that assumptions~(K2) and (A3) with $n=\ell+m$ and $q=m$ hold.
If there exists a key set $D\subset D_\R$ for $C^\omega(U)$
 such that $\hat{h}_r(I)\neq 0$ for some $r\in\Lambda_I$ with $I\in D$,
 then the perturbed system \eqref{eqn:aasys} does not have $\ell$ real-analytic first integrals
 in a neighborhood of the level set $\{c\}\times\Tset^m$ near $\epsilon=0$
 such that they are functionally independent for $|\epsilon|\neq 0$
 and depend analytically on $\epsilon$.
\end{thm}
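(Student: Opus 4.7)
The plan is to deduce Theorem~\ref{thm:4a} as a direct application of Theorem~\ref{thm:main-fi} to the system \eqref{eqn:aasys}, viewed as a perturbation of the $(m,\ell)$-integrable system \eqref{eqn:aasys0}. As recalled in the opening paragraph of Section~\ref{inAA}, with $n=\ell+m$, $q=m$, and the first integrals taken to be the action components $F=(I_1,\ldots,I_\ell)$, assumptions~(A1) and (A2) are automatic (the relevant regular level set being $F^{-1}(c)=\{c\}\times\Tset^m$, which is connected, compact, and carries the $m$ linearly independent coordinate vector fields $\partial/\partial\theta_k$). Assumptions~(K2) and (A3) are in the hypothesis of the theorem, so the only condition of Theorem~\ref{thm:main-fi} that still needs to be extracted from our hypothesis is the non-identical-vanishing of $\I_F^I(\tau)=\I_I^I(\tau)$ on a key set.

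For this I would invoke the Fourier identity already recorded in \eqref{eqn:int-FT},
\[
\I_I^I(\tau) \;=\; T^I\sum_{r\in\Lambda_I}\hat{h}_r(I)\,e^{ir\cdot\tau},
\qquad \Lambda_I=\{r\in\Zset^m\mid r\cdot\omega(I)=0\},
\]
and observe that, as a function of $\tau\in\Tset^m$, this is a trigonometric series whose own Fourier coefficients are precisely $T^I\hat{h}_r(I)$ for $r\in\Lambda_I$. Because a real-analytic function on $\Tset^m$ is uniquely determined by its Fourier coefficients, $\I_I^I(\tau)\not\equiv 0$ in $\tau$ if and only if $\hat{h}_r(I)\neq 0$ for some $r\in\Lambda_I$. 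Hence the non-vanishing hypothesis of the theorem on the key set $D\subset D_\R$ is literally equivalent to the Melnikov hypothesis of Theorem~\ref{thm:main-fi}, and the conclusion follows immediately from that theorem.

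The argument is essentially a bookkeeping translation, and I do not expect any real obstacle. The only delicate point worth writing out is justification of \eqref{eqn:int-FT} itself: for real-analytic $h(I,\theta;0)$ on $U\times\Tset^m$ the Fourier series converges absolutely and uniformly on compact subsets, which legitimizes term-by-term integration along the linear flow $\theta=\omega(I)t+\tau$ and produces, after cancellation of the non-resonant exponentials via $\int_0^{T^I}e^{ir\cdot\omega(I)t}\,dt=0$ for $r\notin\Lambda_I$, the resonant sum above. Once this is in hand, no further estimation or construction is required.
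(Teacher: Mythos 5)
Your proposal is correct and follows essentially the same route as the paper: the authors likewise note that (A1) and (A2) hold automatically for \eqref{eqn:aasys} with $F=(I_1,\ldots,I_\ell)$, derive the resonant Fourier sum \eqref{eqn:int-FT} for $\I_I^I(\tau)$, and conclude by applying Theorem~\ref{thm:main-fi}. Your added remarks on term-by-term integration and on uniqueness of Fourier coefficients simply make explicit what the paper leaves implicit.
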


\begin{rmk}\
\label{rmk:4a}
\begin{itemize}
\setlength{\leftskip}{-1.8em}
\item[\rm(i)]
From the proof given in \cite{K96}
 we see that the conclusion of Theorem~\ref{thm:Kozlov} also holds
 even if the zero vector is taken as one of $r_j\in\Zset^m$, $j=1,\ldots,\ell-s$,
 in the definition of a Poincar\'e set.
This fact was overlooked in \cite{K96}.
\item[\rm(ii)]
If a Poincar\'e set $\P_{\ell-1}\subset U$ modified as stated in part~(i)
 is a key set for $C^\omega(U)$, then condition~(A3) holds.
Moreover, there exists a key set $D\subset D_\R$ for $C^\omega(U)$
 such that $\hat{h}_r(I)\neq 0$ with some $r\in\Lambda_I$ for $I\in D$
 if and only if such a Poincar\'e set $\P_{\ell-1}\subset U$ is a key set for $C^\omega(U)$.
% or $\hat{h}_0(I)$ is not identically zero.
\item[\rm(iii)] The hypothesis of Theorem~\ref{thm:4a} holds
if both of $\omega(I)$ and $\hat{h}_0(I)$ are not identically zero in $U$.
\item[\rm(iv)]
If the system \eqref{eqn:aasys} is Hamiltonian, then $\hat{h}_0(I)\equiv 0$.
\item[\rm(v)]
From Theorem~2.2 of \cite{MY} and Eq.~\eqref{eqn:int-FT}
 we see that the first integrals $I_1,\ldots, I_m$ do not persist in \eqref{eqn:aasys}
 near the resonant torus $\{I\}\times\Tset^m$
 if $\hat{h}_r(I)\neq 0$ for some $r\in \Lambda_I$.
\end{itemize}
\end{rmk}

Let $m=1$ and assume that $\omega(I)\neq 0$.
Then the integral \eqref{eqn:int-FT} becomes
\begin{equation}
\I_{I}^{I}(\tau)=\int_0^{2\pi/\omega(I)} h\left( I ,\omega(I)t+\tau ;0\right) dt
=\frac{2\pi \hat{h}_0(I)}{\omega(I)}
\label{eqn:int-FT1}
\end{equation}
since $\Lambda_I=\{0\}\subset\Zset$.
Noting that assumptions (K2) and (A3) hold if $\omega(I)\neq 0$ for some $I\in U$,
 we obtain the following.

\begin{thm}\label{thm:4b}
Let $m=1$.
If %there exists a key set $D\subset D_\R$ for $C^\omega(U)$ such that
 $\omega(I)\neq 0$ and $\hat{h}_0(I)\neq 0$ for some $I\in U$,
 then the perturbed system \eqref{eqn:aasys} does not have $\ell$ real-analytic first integrals
 in a neighborhood of $\{c\}\times\Sset^1$ near $\epsilon=0$
 such that they are functionally independent for $|\epsilon|\neq 0$
 and depend analytically on $\epsilon$.
\end{thm}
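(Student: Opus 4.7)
The plan is to deduce Theorem~\ref{thm:4b} from Theorem~\ref{thm:main-fi} applied with $n=\ell+1$ and $q=m=1$. For the unperturbed system \eqref{eqn:aasys0} with $m=1$, assumptions~(A1) and (A2) are automatic: the components of $I$ are $\ell$ functionally independent analytic first integrals, the single analytic commutative vector field is $\omega(I)\,\partial/\partial\theta$, and the regular level set $\{c\}\times\Sset^1$ is connected and compact. Hence only (K2), (A3) and the integral hypothesis of Theorem~\ref{thm:main-fi} require verification.

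First I would fix $I_\ast\in U$ with $\omega(I_\ast)\neq 0$ and $\hat{h}_0(I_\ast)\neq 0$, and then form $D:=\{I\in U:\omega(I)\neq 0\text{ and }\hat{h}_0(I)\neq 0\}$. By analyticity of $\omega$ and $\hat{h}_0$, each of the defining inequalities cuts out an open subset of $U$, so $D$ is open and nonempty, and therefore a key set for $C^\omega(U)$. Condition~(K2) with $m=1$ reduces to the statement that any integer $r$ with $r\omega(I)\equiv 0$ on $\Rset^\ell$ must vanish, which is immediate since $\omega$ is analytic and not identically zero on $U$. Condition~(A3) with $q=1$ is likewise elementary: the resonance requirement $\dim_{\Qset}\langle\omega_1(I)\rangle=1$ holds on the whole set $\{\omega\neq 0\}$ (one may take $\omega_0(I)=|\omega(I)|$), so $D_\R$ may be chosen to contain $D$.

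Next I would verify the integral hypothesis of Theorem~\ref{thm:main-fi} with the first integrals $F=I$. For $I\in D$ the identity \eqref{eqn:int-FT1} already recorded in the text gives
\[
\I^{I}_{I}(\tau)=\int_{0}^{2\pi/\omega(I)}h(I,\omega(I)t+\tau;0)\,dt=\frac{2\pi\,\hat{h}_0(I)}{\omega(I)},
\]
because every nonzero Fourier mode of $h(I,\cdot;0)$ integrates to zero over one period of the unperturbed flow when $\omega(I)\neq 0$. For $I\in D$ both $\hat{h}_0(I)$ and $\omega(I)$ are nonzero, so $\I^{I}_{I}(\tau)$ is a nonzero constant in $\tau$, and in particular is not identically zero. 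Theorem~\ref{thm:main-fi} then yields exactly the asserted nonexistence of $\ell$ real-analytic first integrals of \eqref{eqn:aasys} near $\{c\}\times\Sset^1$ that are functionally independent for $|\epsilon|\neq 0$ and depend analytically on $\epsilon$.

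I expect no substantive obstacle: the argument is the $m=1$ specialization of Theorem~\ref{thm:main-fi}, and the only care needed is the analyticity-plus-open-set construction delivering a single key set $D\subset D_\R$ on which both $\omega$ and $\hat{h}_0$ are nonzero simultaneously, which is what makes the hypothesis $\omega(I)\neq 0$ and $\hat{h}_0(I)\neq 0$ at one common point $I\in U$ (rather than at two possibly distinct points) the natural formulation.
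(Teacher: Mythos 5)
Your proposal is correct and follows essentially the same route as the paper: the authors also reduce to Theorem~\ref{thm:main-fi} by observing that for $m=1$ the integral \eqref{eqn:int-FT1} equals $2\pi\hat{h}_0(I)/\omega(I)$ (so it is a nonzero constant wherever $\omega$ and $\hat{h}_0$ are both nonzero), that (K2) and (A3) hold once $\omega\not\equiv 0$, and that the nonempty open set where both functions are nonzero serves as the key set $D\subset D_\R$. The only cosmetic caveat is that calling a nonempty open subset of $U$ a key set uses the identity theorem and hence connectedness of $U$, which one may assume by shrinking the neighborhood.
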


Assuming the existence of $\ell-1$ functionally independent first integrals
 in the perturbed system \eqref{eqn:aasys}
% such that they are functionally independent at $\epsilon=0$,
 and taking Remarks~\ref{rmk:4a}(i) and (ii) into account,
 we obtain  the same result as Theorems~\ref{thm:4a} and \ref{thm:4b} from Theorem~\ref{thm:Kozlov}
(see also Remark~\ref{rmk:3a}).
Moreover, when the existence of such only $s\ (<\ell-1)$ first integrals is assumed,
 Theorem~\ref{thm:Kozlov} guarantees the nonexistence of no additional first integral
 if a Poincar\'e set $\P_s$ modified in Remark~\ref{rmk:4a}(i) is a key set for $C^\omega(U)$,
 in particular $\hat{h}_{r_j}(I)$, $j=1,\ldots,\ell-s\ (>1)$, are linearly independent.
% for $r_j\in\Zset^m$ such that $r_j\neq 0$ are linearly independent
% and $r_j\cdot\omega(I)=0$, $j=1,\ldots,\ell-s$.
Note that such a Poincar\'e set $\P_s$ does not exist when $m=1$ and $s<\ell-1$.
%When $\ell=m$, assuming the existence of $\ell-1$ functionally independent first integrals
% in the perturbed system \eqref{eqn:aasys} and taking Remark~\ref{rmk:4a}(i) into account,
% we obtain  the same result as Theorem~\ref{thm:4a} from Theorem~\ref{thm:Kozlov}.
%However, when $\ell\neq m$, we cannot:
%If $\ell>m$, then we cannot assume
% the existence of $\ell-1$ functionally independent first integrals in Theorem~\ref{thm:Kozlov};
% and if $\ell<m$, then we can only take $s=\ell-1<m-1$ at most
% and consequently the Fourier coefficients $\hat{h}_{r_j}(I)$, $j=1,\ldots,m-\ell+1$,
% need to be linearly independent on the corresponding Poincar\'e set $\P_{\ell-1}$.

We next apply Theorem~\ref{thm:main-integ} to \eqref{eqn:aasys}.
When $m=1$, the integral $\I_I^I(\tau)$ is constant by \eqref{eqn:int-FT1},
 so that Theorem~\ref{thm:main-integ}  does not apply.
Thus, we obtain the following result.
 
\begin{thm}\label{thm:4c}
Let $m>1$, and suppose that assumptions~(K2), (A3) and (A4) hold.
If there exists a key set $D\subset D_\R$ for $C^\omega(U)$
 such that $\hat{h}_r(I)\neq 0$ for some $r\in\Lambda_I\setminus\{0\}$ with $I\in D$,
 then for $|\epsilon|\neq 0$ sufficiently small
 the perturbed system \eqref{eqn:aasys} is not real-analytically integrable
 in the meaning of Theorem~\ref{thm:main-integ} near $\{c\}\times\Tset^m$.
\end{thm}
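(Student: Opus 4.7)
The plan is to reduce Theorem~\ref{thm:4c} to a direct application of Theorem~\ref{thm:main-integ}, exactly as was done in the proof of Theorem~\ref{thm:4a}. As already noted in Section~4, the system \eqref{eqn:aasys0} satisfies (A1) and (A2) automatically with $n=\ell+m$, $q=m$, first integrals $F=I=(I_1,\ldots,I_\ell)$, and the standard commutative vector fields on the torus $\{c\}\times\Tset^m$. Since (K2), (A3) and (A4) are assumed, every hypothesis of Theorem~\ref{thm:main-integ} is in place except the one involving the obstruction integrals $\I_F^I(\tau)$, so the only real work is to verify that the condition \textquotedblleft$\I_F^I(\tau)$ is not constant in $\tau$\textquotedblright{} holds on a key set.

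For this, I would first observe that with $F=I$ the integral defined in \eqref{eqn:i} reduces to
\[
\I_I^I(\tau)=\int_0^{T^I} h(I,\omega(I)t+\tau;0)\,\d t,
\]
which, by inserting the Fourier expansion \eqref{eqn:hath} of $h(I,\theta;0)$ and integrating term by term, yields the explicit form \eqref{eqn:int-FT}:
\[
\I_I^I(\tau)=T^I\sum_{r\in\Lambda_I}\hat{h}_r(I)\,e^{ir\cdot\tau},
\qquad \Lambda_I=\{r\in\Zset^m\mid r\cdot\omega(I)=0\}.
\]
The term with $r=0\in\Lambda_I$ is a constant, and the other terms form a genuine nonzero Fourier series on the subtorus $\{e^{ir\cdot\tau}:r\in\Lambda_I\}$ of $\Tset^m$.

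Next, I would invoke the uniqueness of Fourier coefficients to conclude that $\I_I^I(\tau)$ is constant in $\tau$ if and only if $\hat{h}_r(I)=0$ for every $r\in\Lambda_I\setminus\{0\}$. Consequently, the hypothesis \textquotedblleft $\hat{h}_r(I)\neq 0$ for some $r\in\Lambda_I\setminus\{0\}$\textquotedblright{} for each $I$ in the given key set $D\subset D_\R$ is equivalent to \textquotedblleft $\I_I^I(\tau)$ is not constant for any $I\in D$.\textquotedblright{} Applying Theorem~\ref{thm:main-integ} with this $D$ immediately yields the conclusion that the perturbed system \eqref{eqn:aasys} is not real-analytically integrable in the Bogoyavlenskij sense near $\{c\}\times\Tset^m$, in the precise meaning of Theorem~\ref{thm:main-integ}.

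The argument is essentially mechanical once the Fourier computation \eqref{eqn:int-FT} is in hand, so there is no substantive obstacle; the only point to watch is distinguishing the $r=0$ contribution (which merely shifts $\I_I^I(\tau)$ by a constant and is therefore irrelevant for nonconstancy) from the contributions with $r\in\Lambda_I\setminus\{0\}$, which is exactly why the statement here excludes $r=0$ whereas Theorem~\ref{thm:4a} does not.
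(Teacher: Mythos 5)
Your proposal is correct and follows essentially the same route as the paper: the paper likewise derives the Fourier expansion \eqref{eqn:int-FT} of $\I_I^I(\tau)$, observes that the $r=0$ term is the only constant contribution so that non-constancy in $\tau$ is equivalent to $\hat{h}_r(I)\neq 0$ for some $r\in\Lambda_I\setminus\{0\}$, and then applies Theorem~\ref{thm:main-integ} with (A1), (A2) holding automatically for \eqref{eqn:aasys}. Your explicit remark about why $r=0$ must be excluded here but not in Theorem~\ref{thm:4a} is exactly the point the paper makes implicitly.
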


%%%%%%%%%%%%%%%%%%%%%%%%%%%
%    Melnikov method
%%%%%%%%%%%%%%%%%%%%%%%%%%%

\section{Relationships with the Melnikov Methods} 
%Periodic perturbations of single-degree-of-freedom Hamiltonian systems}
\label{Melnikov}
In this section, we discuss relationships of our main results in Section~2
 with the subharmonic and homoclinic Melnikov methods 
 for time-periodic perturbations of single-degree-of-freedom Hamiltonian systems.
See \cite{GH83,M63,W90,Y96} for the details of the Melnikov methods.
A concise review of the methods was also given in Section~4.1 of \cite{MY}.

%\subsection{Melnikov's method}
Consider systems of the form
\begin{align}\label{mel}
\dot{x}=J\D H(x)+\varepsilon u(x,\nu t),\quad
x\in\Rset^2,
\end{align}
where $\epsilon$ is a small parameter as in the preceding sections, $\nu>0$ is a constant,
 $H:\mathbb{R}^2\to\mathbb{R}$ and $u:\Rset^2\times\Sset\to\Rset^2$ are analytic,
 and $J$ is the $2\times 2$ symplectic matrix,
\[
J=%\left(\begin{array}{cc} 0&1\\ -1 & 0 \end{array}\right)
\begin{pmatrix}
0 & 1\\
-1 & 0
\end{pmatrix}.
\]
Equation~\eqref{mel} represents a time-periodic perturbation
 of the single-degree-of-freedom Hamiltonian system
\begin{align}\label{mel0}
\dot{x}=J\D H(x),
\end{align}
with the Hamiltonian $H(x)$.
Letting $\phi=\nu t\mod 2\pi$ such that $\phi\in\Sset^1$,
 we rewrite \eqref{mel} as an autonomous system,
\begin{equation}\label{mel-aut}
%\begin{split}
\dot{x}=J\D H(x)+\varepsilon u(x,\phi),\quad
\dot{\phi}=\nu.
%\end{split}
\end{equation}
We easily see that assumptions~(A1) and (A2) hold in \eqref{mel-aut} with $\epsilon=0$:
 $H(x)$ is a first integral and $(0,1)\in\Rset^2\times\Rset$ is a commutative vector field.
We make the following assumptions on the unperturbed system \eqref{mel0}:
\begin {enumerate}
\setlength{\leftskip}{-0.6em}
\item[\bf(M1)]
There exists a one-parameter family of periodic orbits $x^{\alpha}(t)$
 with period $\hat{T}^{\alpha}>0$, $\alpha\in(\alpha_1,\alpha_2)$, for some $\alpha_1<\alpha_2$.
Moreover, $\hat{T}^{\alpha}$ is not constant as a function of $\alpha$.
 \item[\bf(M2)]
$x^{\alpha}(t)$ is analytic with respect to $\alpha\in(\alpha_1,\alpha_2)$.
\end{enumerate}
Note that in (M1) $x^{\alpha}(t)$ is automatically analytic with respect to $t$
 since the vector field of \eqref{mel0} is analytic.

We assume that at $\alpha=\alpha^{l/n}$
\begin{equation}
\frac{2\pi}{\hat{T}^\alpha}=\frac{n}{l}\nu,
\label{eqn:res4}
\end{equation}
where $l$ and $n$ are relatively prime integers.
We define the \emph{subharmonic Melnikov function} as
\begin{equation}
M^{l/n}(\phi)=\int_0^{2\pi l/\nu}\D H(x^{\alpha}(t))\cdot u(x^\alpha(t),\nu t+\phi)\d t,
\label{eqn:subM}
\end{equation}
where $\alpha=\alpha^{l/n}$.
Let $T^\alpha=n\hat{T}^\alpha=2\pi l/\nu$ for $\alpha=\alpha^{l/n}$.
If $M^{l/n}(\phi)$ has a simple zero at $\phi=\phi_0$
 and $d\hat{T}^\alpha/d\alpha\neq 0$ at $\alpha=\alpha^{l/n}$,
 then for $|\epsilon|>0$ sufficiently small
 there exists a $T^\alpha$-periodic orbit
 near $(x,\phi)=(x^\alpha(t),\nu t+\phi_0)$ in \eqref{mel-aut}.
See Theorem~3.1 of \cite{Y96}.
A similar result is also found in \cite{GH83,W90}.
The stability of the periodic orbit can also be determined easily \cite{Y96}.
Moreover, several bifurcations of periodic orbits
 when $d\hat{T}^\alpha/d\alpha\neq 0$ or not
 were discussed in \cite{Y96,Y02,Y03}.

On the other hand, since it is a single-degree-of-freedom Hamiltonian system,
 the unperturbed system \eqref{mel0} is integrable,
 so that it can be transformed into the form \eqref{eqn:aasys0} with $\ell,m=1$.
So the perturbed system \eqref{mel-aut} is transformed into the form \eqref{eqn:aasys}
 with $\ell=1$ and $m=2$.
Here we take $I=\alpha$ unlike \cite{Y96,Y21a},
 and have $\omega(I)=(\Omega(I),\nu)$, where
\[
\Omega(\alpha)=\frac{2\pi}{\hat{T}^\alpha}.
\]
We remark that the transformed system is not Hamiltonian even when $\epsilon=0$,
 unlike \cite{Y96,Y21a}.
Choose a point $\alpha=\alpha_0\in(\alpha_1,\alpha_2)$
 such that $d\hat{T}^\alpha/d\alpha\neq 0$,
 and let $U$ be a neighborhood of $\alpha_0$.
We see that assumptions~(K2) and (A3) hold for
\[
D_\R=\{\alpha^{l/n}\mid l,n\in\Nset\}\cap U.
\]

Let $\alpha=\alpha^{l/n}$ and let $\gamma_\tau^\alpha(t)=(x^\alpha(t+\tau_1),\nu (t+\tau_1)+\tau_2)$.
We see that $\gamma_\tau^\alpha(t)$ is a $T^\alpha$-periodic orbit
% \red{with} $\alpha=\alpha^{l/n}$
 in \eqref{mel-aut} with $\epsilon=0$.
Note that $\gamma_\tau^\alpha(t)$ is essentially parameterized by a single parameter,
 say $\phi:=\nu \tau_1+\tau_2$.
So we write
 $\gamma_\phi^\alpha(t)=(x^\alpha(t),\nu t+\phi)$. %for $\alpha=\alpha^{l/n}$.
The integral \eqref{eqn:i} for $H(x)$ along $\gamma_\phi^\alpha(t)$ %with $\alpha=\alpha^{l/n}$
 becomes 
\begin{align}
\I_H^\alpha(\phi)%=&\int_0^{2\pi l/\nu}(\D H\cdot u)_{\gamma^\alpha_{\blue{\phi}}(t)}dt\notag\\
=&\int_0^{2\pi l/\nu}\D H(x^\alpha(t))\cdot u(x^\alpha(t),\nu t+\phi)dt
=M^{l/n}(\phi)
\end{align}
by \eqref{eqn:subM}.
As stated above,
 if $M^{l/n}(\phi)$ has a simple zero at $\phi=\phi_0$,
  then there exists a $T^\alpha$-periodic orbit near $\gamma_{\phi_0}^\alpha(t)$.
Applying Theorems~\ref{thm:main-fi} and \ref{thm:main-integ}, we have the following two results.

\begin{thm}\label{thm:sub-mel1}
Suppose that there exists a key set $D\subset D_\R$ for $C^\omega(U)$ 
 such that $M^{l/n}(\phi)$ is not identically zero for $\alpha^{l/n}\in D$.
Then for $|\epsilon|\neq 0$ sufficiently small
 the system \eqref{mel-aut} has no real-analytic first integral
 in a neighborhood of $\{x^{\alpha_0}(t)\mid t\in[0,\hat{T}^{\alpha_0})\}\times\Sset^1$
 such that it depends analytically on $\epsilon$ near $\epsilon=0$.
\end{thm}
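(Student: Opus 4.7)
The plan is to deduce Theorem~\ref{thm:sub-mel1} directly from Theorem~\ref{thm:main-fi} applied to the extended autonomous system \eqref{mel-aut} on the three-dimensional manifold $\M=\Rset^2\times\Sset^1$, taking $n=3$ and $q=2$ so that $n-q=1$. With this choice, the conclusion of Theorem~\ref{thm:main-fi} is precisely the nonexistence of a non-constant real-analytic first integral that depends analytically on $\epsilon$, which is exactly what Theorem~\ref{thm:sub-mel1} asserts.

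First, I would verify the standing hypotheses (A1)--(A3) and (K2) in this setting. For (A1), take $Y_1:=X^0=(J\D H(x),\nu)$ and $Y_2:=\partial/\partial\phi$; these commute because $X^0$ is $\phi$-independent, and $F_1:=H$ is a first integral of both. For (A2), set $c=H(x^{\alpha_0}(0))$; since $x^{\alpha_0}$ is not a fixed point of $X_H$, the differential $\D H$ is nowhere zero along it, so $c$ is a regular value and $Y_1,Y_2$ are linearly independent on the connected compact component $\{x^{\alpha_0}(t)\mid t\in\Rset\}\times\Sset^1$, which is analytically a $2$-torus. Assumptions (K2) and (A3) have already been spelled out in the discussion preceding the theorem: because $d\hat{T}^\alpha/d\alpha\neq 0$ at $\alpha_0$, the frequency $\Omega(\alpha)=2\pi/\hat{T}^\alpha$ is strictly monotone on a small neighborhood $U$ of $\alpha_0$, so any relation $r_1\Omega(\alpha)+r_2\nu\equiv 0$ on $U$ forces $r=0$ (giving (K2)), and the resonant set $D_\R=\{\alpha^{l/n}\}\cap U$ is dense in $U$ and hence a key set for $C^\omega(U)$ (giving (A3)).

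Next, I would identify the obstruction $\I_F^I(\tau)$ of Theorem~\ref{thm:main-fi} with $M^{l/n}(\phi)$. This identification is essentially contained in the computation immediately preceding the theorem: after reducing the two-parameter family of resonant periodic orbits $\gamma_\tau^\alpha$ to an essentially one-parameter family $\gamma_\phi^\alpha(t)=(x^\alpha(t),\nu t+\phi)$ via $\phi=\nu\tau_1+\tau_2$, the single-component integral \eqref{eqn:i} collapses to $\I_H^\alpha(\phi)=\int_0^{2\pi l/\nu}\D H(x^\alpha(t))\cdot u(x^\alpha(t),\nu t+\phi)\,dt=M^{l/n}(\phi)$. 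Thus the hypothesis that $M^{l/n}$ is not identically zero on the key set $D\subset D_\R$ is exactly the hypothesis of Theorem~\ref{thm:main-fi} in this setting, and invoking that theorem yields the desired nonexistence; the conclusion in the original $(x,\phi)$-coordinates follows by pulling back through the action-angle diffeomorphism of Proposition~\ref{LAJ}.

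There is no substantive obstacle here, since every ingredient has been prepared earlier in the paper. The one bookkeeping point that should be made carefully is the correspondence between neighborhoods in the two coordinate systems: the neighborhood of $\{x^{\alpha_0}(t)\mid t\in[0,\hat{T}^{\alpha_0})\}\times\Sset^1$ in the original variables matches, under the $\epsilon$-independent diffeomorphism of Proposition~\ref{LAJ}(ii), a neighborhood of $F^{-1}(c)$ in the action-angle chart, so the property of being analytic in $\epsilon$ is preserved by the transformation and the statement transports faithfully from one chart to the other.
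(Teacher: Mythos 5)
Your proposal is correct and follows the same route as the paper: Section~\ref{Melnikov} sets up exactly the verification of (A1), (A2), (K2) and (A3) for \eqref{mel-aut} with $n=3$, $q=2$, identifies $\I_H^\alpha(\phi)$ with $M^{l/n}(\phi)$, and then the theorem is obtained by direct application of Theorem~\ref{thm:main-fi}. The only detail worth noting is that your verification is somewhat more explicit than the paper's (which simply asserts these hypotheses hold), but the substance is identical.
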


\begin{thm}\label{thm:sub-mel2}
Suppose that there exists a key set $D\subset D_\R$ for $C^\omega(U)$ 
 such that $M^{l/n}(\phi)$ is not constant for $\alpha^{l/n}\in D$.
Then for $|\epsilon|\neq 0$ sufficiently small
 the system \eqref{mel-aut} is not real-analytically integrable
 in the meaning of Theorem~\ref{thm:main-integ}
 in a neighborhood of $\{x^{\alpha_0}(t)\mid t\in[0,\hat{T}^{\alpha_0})\}\times\Sset^1$.
\end{thm}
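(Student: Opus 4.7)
The plan is to deduce Theorem~\ref{thm:sub-mel2} directly from Theorem~\ref{thm:main-integ}, reusing the setup already developed in the paragraphs preceding Theorem~\ref{thm:sub-mel1}. The autonomous system \eqref{mel-aut} lives on a $3$-dimensional manifold with $n=3$, $q=2$ and $n-q=1$; the unperturbed first integral is $H(x)$ and the two commutative vector fields are $J\D H(x)$ and $\partial/\partial\phi$, so (A1) and (A2) are immediate from (M1), and (K2) and (A3) are already verified in the discussion preceding Theorem~\ref{thm:sub-mel1} (the density of $D_\R=\{\alpha^{l/n}\}\cap U$ in $U$ makes it a key set for $C^\omega(U)$).

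The one new hypothesis that has to be checked is (A4), i.e.\ $\rank\D\omega(\alpha^*)=n-q=1$ for some $\alpha^*\in U$. Since $\omega(\alpha)=(\Omega(\alpha),\nu)$ with $\Omega(\alpha)=2\pi/\hat{T}^\alpha$, the Jacobian is $\D\omega(\alpha)=(\Omega'(\alpha),0)^{\top}$, and this has rank one precisely when $\Omega'(\alpha)\neq 0$, equivalently $d\hat{T}^\alpha/d\alpha\neq 0$. But $\alpha_0$ was chosen exactly so that $d\hat{T}^\alpha/d\alpha\neq 0$ at $\alpha=\alpha_0$, so (A4) holds at $I^*=\alpha_0$.

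Next I would identify the Melnikov integral with the obstruction of Theorem~\ref{thm:main-integ}. The computation carried out just before Theorem~\ref{thm:sub-mel1} already gives
\[
\I_H^{\alpha}(\phi)=M^{l/n}(\phi)
\]
for $\alpha=\alpha^{l/n}$, where $\phi=\nu\tau_1+\tau_2$ is the essential parameter of the two-torus of periodic orbits $\gamma_\tau^\alpha$. Thus $\I_F^I(\tau)$ (in the notation of Section~\ref{Main results}) reduces to $M^{l/n}(\nu\tau_1+\tau_2)$, and its being nonconstant as a function of $\tau\in\Tset^2$ is equivalent to $M^{l/n}(\phi)$ being nonconstant as a function of $\phi\in\Sset^1$. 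Therefore the hypothesis of Theorem~\ref{thm:sub-mel2} on the key set $D$ translates verbatim to the hypothesis of Theorem~\ref{thm:main-integ}.

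With (A1)--(A4), (K2) and the nonconstancy of $\I_F^I$ on a key set $D$ all in hand, Theorem~\ref{thm:main-integ} applies and yields the desired nonintegrability in the Bogoyavlenskij sense in a neighborhood of $F^{-1}(c)=\{x^{\alpha_0}(t)\mid t\in[0,\hat{T}^{\alpha_0})\}\times\Sset^1$. I do not expect any genuine obstacle here: the only non-bookkeeping point is the verification of (A4), and the role of the geometric condition $d\hat{T}^\alpha/d\alpha\neq 0$ at $\alpha_0$ is precisely to secure it. The one place requiring mild care is to make sure the passage from the two-dimensional torus parameter $\tau$ to the one-dimensional Melnikov phase $\phi$ does not accidentally collapse ``nonconstant in $\phi$'' into ``constant in $\tau$''; but because $M^{l/n}$ enters only through the combination $\nu\tau_1+\tau_2$ with $\nu\neq 0$, a nonconstant $M^{l/n}$ really does give a nonconstant $\I_F^I$.
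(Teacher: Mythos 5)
Your proposal is correct and follows essentially the same route as the paper, which derives Theorem~\ref{thm:sub-mel2} by verifying (A1)--(A3) and (K2) in the paragraphs preceding Theorem~\ref{thm:sub-mel1}, identifying $\I_H^\alpha(\phi)$ with $M^{l/n}(\phi)$, and then invoking Theorem~\ref{thm:main-integ}; your explicit check of (A4) via $\D\omega(\alpha)=(\Omega'(\alpha),0)^{\top}$ and the choice of $\alpha_0$ with $d\hat{T}^\alpha/d\alpha\neq 0$ is exactly the point the paper leaves implicit. The observation that nonconstancy in $\phi=\nu\tau_1+\tau_2$ is equivalent to nonconstancy in $\tau\in\Tset^2$ is also the right (and only) bookkeeping step needed.
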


\begin{rmk}\
\label{rmk:5a}
\begin{itemize}
\setlength{\leftskip}{-1.8em}
\item[\rm(i)]
If $D$ has an accumulation point,
 then it becomes a key set for $C^\omega(U)$.
\item[\rm(ii)]
If the system \eqref{mel} is Hamiltonian,
 then the hypotheses of Theorems~\ref{thm:sub-mel1} and \ref{thm:sub-mel2}
 are equivalent to the condition that $M^{l/n}(\phi)$ has a simple zero.
Actually, letting
\[
u(x,\phi)=J\D_x H^1(x,\phi)=J\sum_{r\in\Zset}\D\hat{H}_r^1(x)e^{ir\phi},
\]
we have
\begin{align*}
M^{l/n}(\phi)=&\int_0^{2\pi l/\nu}\D H(x^\alpha(t))\cdot J\D_x H^1(x^\alpha(t),\nu t+\phi)dt\\
=&\sum_{r\in\Zset}e^{ir\phi}\int_0^{2\pi l/\nu}\D H(x^\alpha(t))\cdot J\D\hat{H}_r^1(x^\alpha(t))e^{ir\nu t}d t
\end{align*}
and
\begin{align*}
&
\int_0^{2\pi l/\nu}\D H(x^\alpha(t))\cdot J\D\hat{H}_0^1(x^\alpha(t))d t\\
&
=-\int_0^{2\pi l/\nu}\D\hat{H}_0^1(x^\alpha(t))\cdot J\D H(x^\alpha(t))d t\\
&
=-\int_0^{2\pi l/\nu}\D\hat{H}_0^1(x^\alpha(t))\cdot \dot{x}^\alpha(t)d t=0,
\end{align*}
where $\hat{H}_r^1(x)$, $r\in\Zset$, represent the Fourier coefficients of $H^1(x,\phi)$.
Thus, we obtain the claim.
\end{itemize}
\end{rmk}

%\begin{prop}\label{prop:sub-mel}
%If Eq.~\eqref{mel} is a time-periodic Hamiltonian system, that is,
%there exists an analytic function $H_\epsilon(x,t)=H(x)+\epsilon H^1(x,t;\epsilon)$ such that
% Eq.~\eqref{mel} is given by $\dot{x}=J\D_xH_\epsilon(x,t)$,
% and if the time average of $H^1(x,t;0)$ is $0$,
% then the time average of $M^{l/n}(\phi)$ is also.
% In particular, under the hypothesis stated above, the following three conditions are equivalent:
% \begin{itemize}
% 	\item[\rm(i)] $M^{l/n}(\phi)$ is not identically zero
%	\item[\rm(ii)] $M^{l/n}(\phi)$ is not constant
%	\item[\rm(iii)] $M^{l/n}(\phi)$ has a simple zero
% \end{itemize}
%\end{prop}
%\begin{proof}
%	We expand $H^1(x,t;0)$ in Fourier series as $H^1(x,t;0)=\sum_{r\in\Zset}\hat{H}_r(x) \exp{(i r\nu t)}$
%	where $\hat{H}_r\ ( r\in \Zset)$ are the Fourier coefficients.
%	Then, we have
%	\begin{align*}
%		M^{l/n}(\phi)=\sum_{r\in \Zset}\left(\int_{0}^{2\pi l/\nu} \{H_0, \hat{H}_r\}_{x^\alpha(t)} \exp(ir\nu t) dt\right) \exp({ir\nu \phi}).
%	\end{align*}
%	Thus, if $ \hat{H}_0(x)=0$, then the time average of $M(\phi)$ vanishes.
%	Moreover, in that case, conditions (i), (ii) and (iii) are equivalent
%	by the smoothness of the subharmonic Melnikov function $M^{l/n}(\phi)$.
%\end{proof}

\begin{figure}
\includegraphics[scale=0.9,bb=0 0 179 122]{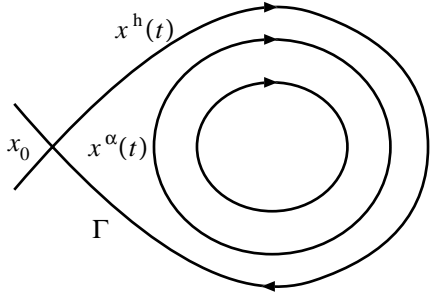}
\caption{Assumption~(M3).\label{fig:M3}}
\end{figure}

We additionally assume the following on the unperturbed system \eqref{mel0}:
\begin {enumerate}
\setlength{\leftskip}{-0.6em}
\item[\bf(M3)]
There exists a hyperbolic saddle $x_0$ with a homoclinic orbit $x^\h(t)$ such that
\[
\lim_{\alpha\to\alpha_2}\sup_{t\in\Rset}d(x^\alpha(t),\Gamma)=0,
\]
where $\Gamma=\{x^\h(t)\mid t\in\Rset\}\cup\{x_0\}$
 and $d(x,\Gamma)=\inf_{y\in\Gamma}|x-y|$.
See Fig.~\ref{fig:M3}.
\end{enumerate}
We define the \emph{homoclinic Melnikov function} as 
\begin{equation}
M(\phi)=\int_{-\infty}^{\infty}\D H(x^{\h}(t))\cdot u(x^{\h}(t),t+\phi)dt.
\label{eqn:homM}
\end{equation}
If $M(\phi)$ has a simple zero,
 then for $|\epsilon|>0$ sufficiently small
 there exist transverse homoclinic orbits to a periodic orbit near $\{x_0\}\times\Sset^1$
 in \eqref{mel-aut} \cite{GH83,M63,W90}.
The existence of such transverse homoclinic orbits
 implies that the system \eqref{mel-aut} exhibits chaotic motions
 by the Smale-Birkhoff theorem \cite{GH83,W90}
 and has no resl-analytic (additional) first integral (see, e.g., Chapter III of \cite{M73}).
We easily show that
\begin{equation}
\lim_{l\to \infty} M^{l/1}(\phi)=M(\phi)
\label{eqn:limM}
\end{equation}
for each $\phi\in \Sset^1$ (see Theorem~4.6.4 of \cite{GH83}).
Let $U$ be a neighborhood of $\alpha=\alpha_2$.
It follows from \eqref{eqn:limM} that
 if $M(\phi)$ is not identically zero or constant,
 then for $l>0$ sufficiently large neither is $M^{l/1}(\phi)$.
Let $\hat{U}\subset\Rset^2$ be a region such that $\partial\hat{U}\supset \Gamma$
%where $\Gamma^\p=\{x^\alpha(t)\mid t\in[0,\hat{T}^\alpha] \}$
and $\hat{U}\supset\{x^\alpha(t)\mid t\in[0,\hat{T}^\alpha)\}$
for some $\alpha\in(\alpha_1,\alpha_2)$.
We obtain the following from Theorems~\ref{thm:sub-mel1} and \ref{thm:sub-mel2}.

\begin{thm}\label{thm:hom-mel1}
Suppose that $M(\phi)$ is not identically zero
Then for $|\epsilon|\neq 0$ sufficiently small
 the system \eqref{mel-aut} has no real-analytic first integral
 in $\hat{U}\times\Sset^1$%a neighborhood of $\Gamma\times\Sset^1$
 such that it depends analytically on $\epsilon$ near $\epsilon=0$.
\end{thm}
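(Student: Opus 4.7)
The plan is to deduce Theorem~\ref{thm:hom-mel1} from Theorem~\ref{thm:sub-mel1} by using the limit \eqref{eqn:limM} to convert the homoclinic hypothesis into a subharmonic one. Since $\D H$, $u$ and $x^\h(t)$ are analytic and $x^\h$ decays exponentially to $x_0$, the function $M(\phi)$ in \eqref{eqn:homM} is real-analytic on $\Sset^1$, so $M\not\equiv 0$ yields $\phi_0\in\Sset^1$ with $M(\phi_0)\neq 0$. Evaluating \eqref{eqn:limM} at $\phi=\phi_0$ then produces $l_\ast\in\Nset$ such that $M^{l/1}(\phi_0)\neq 0$, and in particular $M^{l/1}(\phi)\not\equiv 0$, for every $l\geq l_\ast$.

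Next I would set up the subharmonic framework as in Section~\ref{Melnikov}. By (M3), $\hat T^\alpha\to\infty$ as $\alpha\to\alpha_2$, so the resonance values $\alpha^{l/1}$ determined by $\hat T^{\alpha^{l/1}}=2\pi l/\nu$ form a monotone sequence converging to $\alpha_2$. Choose $\alpha_0\in(\alpha_1,\alpha_2)$ close enough to $\alpha_2$ that $x^{\alpha_0}(t)\subset\hat U$ for every $t$, and pick an open neighborhood $U\subset(\alpha_1,\alpha_2)$ of $\alpha_0$ on which $d\hat T^\alpha/d\alpha$ does not vanish; this guarantees that (K2) and (A3) hold for \eqref{mel-aut} with $D_\R=\{\alpha^{l/n}\mid l,n\in\Nset\}\cap U$. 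Setting $D=\{\alpha^{l/1}\in U:l\geq l_\ast\}\subset D_\R$, the first paragraph gives $M^{l/1}\not\equiv 0$ at every point of $D$. Provided $D$ is a key set for $C^\omega(U)$, Theorem~\ref{thm:sub-mel1} then rules out a real-analytic first integral depending analytically on $\epsilon$ in any neighborhood of $\{x^{\alpha_0}(t)\mid t\in[0,\hat T^{\alpha_0})\}\times\Sset^1$, and since this set sits in $\hat U\times\Sset^1$, Theorem~\ref{thm:hom-mel1} follows at once.

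The main obstacle is therefore verifying the key-set property for $D$. The naive sequence $\{\alpha^{l/1}\}_{l\geq l_\ast}$ accumulates only at $\alpha_2\in\overline U\setminus U$, so Remark~\ref{rmk:5a}(i) does not apply as stated. I would address this by enlarging $D$ with further resonances whose Melnikov functions are nontrivial: decomposing $M^{l/n}(\phi)$ into a sum of $n$ shifted one-period integrals and applying \eqref{eqn:limM} to each piece gives $\lim_{l\to\infty,\,\gcd(l,n)=1}M^{l/n}(\phi)=n\sum_{r\in n\Zset}\hat M_r e^{ir\phi}$, so $M^{l/n}\not\equiv 0$ for every sufficiently large $l$ whenever $n$ divides some nonzero Fourier index of $M$ (which holds for all $n$ when $\hat M_0\neq 0$). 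Combining these resonance points over such $n$ and exploiting the density of the rational ratios $l/n$ in the image of $\alpha\mapsto\nu\hat T^\alpha/(2\pi)$ on $U$ should furnish an accumulation point of the enlarged $D$ in the interior of $U$, hence the required key set. Making this enlargement robust against possible cancellations among the Fourier modes of $M$ is where the technical work lies.
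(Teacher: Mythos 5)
Your first two paragraphs reproduce the paper's intended argument: use \eqref{eqn:limM} to get $M^{l/1}\not\equiv 0$ for all $l\ge l_\ast$ and feed the corresponding resonant actions into Theorem~\ref{thm:sub-mel1}. The genuine gap is in your third paragraph, and the repair you sketch there cannot be completed. Writing $M(\phi)=\sum_{r}\hat{M}_re^{ir\phi}$, your limit formula $\lim_{l\to\infty}M^{l/n}(\phi)=n\sum_{r\in n\Zset}\hat{M}_re^{ir\phi}$ is correct, but this sum can vanish identically for \emph{every} $n\ge 2$ even though $M\not\equiv 0$: take $M(\phi)=c\sin\phi$, which is exactly the homoclinic Melnikov function of the undamped Duffing oscillator in Section~\ref{Duf} (there $\hat{M}_0=0$ and only $r=\pm 1$ contribute). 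In that case your enlarged set is the original one. Moreover, even when some $n\ge 2$ does contribute, nontriviality of $M^{l/n}$ is only guaranteed for $l$ large with $n$ fixed, and for each fixed $n$ the points $\alpha^{l/n}$ again accumulate only at $\alpha_2$; an interior accumulation point would require resonances with $l/n$ bounded and $n\to\infty$, a regime in which \eqref{eqn:limM} gives no information. So the enlargement does not, in general, produce an accumulation point of $D$ inside $(\alpha_1,\alpha_2)$, and the key-set property is not established.

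The paper resolves this differently: it takes $U$ to be a neighborhood of $\alpha=\alpha_2$ itself, so that $D=\{\alpha^{l/1}\mid l\ge l_\ast\}$ accumulates at a point \emph{of} $U$ and Remark~\ref{rmk:5a}(i) applies directly. Concretely, the obstruction from Lemma~\ref{lem:key} forces the reduced first integral $\psi$ (with $G^0=\psi\circ H$) to satisfy $\psi'(h_l)=0$ at the energies $h_l=H(x^{\alpha^{l/1}})$, and since $h_l\to H(\Gamma)$ one concludes $\psi'\equiv 0$ by the identity theorem \emph{at the accumulation point} $H(\Gamma)$, not at an interior point. This is where the hypothesis that the first integral lives on all of $\hat{U}\times\Sset^1$ with $\Gamma\subset\partial\hat{U}$ (or, where the extension of $\psi$ to the homoclinic energy is automatic, on a neighborhood of $\Gamma\times\Sset^1$ as in Remark~\ref{rmk:5b}(iii)) is used. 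By restricting to a subinterval $U$ compactly contained in $(\alpha_1,\alpha_2)$ you discard precisely this information, which is why you are forced to hunt for interior accumulation points. The fix is not to enlarge $D$ but to carry the zeros of $\psi'$ to their limit point at the homoclinic level.
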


\begin{thm}\label{thm:hom-mel2}
Suppose that $M(\phi)$ is not constant.
Then for $|\epsilon|\neq 0$ sufficiently small
 the system \eqref{mel-aut} is not real-analytically integrable
 in the meaning of Theorem~\ref{thm:main-integ}
 in $\hat{U}\times\Sset^1$. %a neighborhood of $\Gamma\times\Sset^1$
\end{thm}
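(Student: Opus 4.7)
The plan is to follow the template used for Theorem~\ref{thm:hom-mel1}, with Theorem~\ref{thm:sub-mel2} playing the role of Theorem~\ref{thm:sub-mel1}. The central idea is to transfer the non-constancy of the homoclinic Melnikov function $M(\phi)$ to non-constancy of the $n=1$ subharmonic Melnikov functions $M^{l/1}(\phi)$ for all sufficiently large $l$, using the pointwise limit \eqref{eqn:limM}, and then invoke the subharmonic non-integrability criterion at a resonant periodic orbit lying close enough to the homoclinic loop $\Gamma$ to sit inside $\hat U$.

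For the transfer, since $M(\phi)$ is non-constant there exist $\phi_1,\phi_2\in\Sset^1$ with $M(\phi_1)\neq M(\phi_2)$; applying \eqref{eqn:limM} at $\phi_1$ and $\phi_2$ yields $M^{l/1}(\phi_1)\neq M^{l/1}(\phi_2)$ for every $l\ge L_0$ with some $L_0\in\Nset$, so $M^{l/1}(\phi)$ is non-constant for all such $l$. Next, (M3) together with $\Omega(\alpha^{l/1})=\nu/l\to 0$ forces $\alpha^{l/1}\to\alpha_2$, so the periodic orbit $x^{\alpha^{l/1}}$ lies in $\hat U$ for $l$ large. Enlarging $L_0$ if necessary, I would fix $L\ge L_0$ with $d\hat T^\alpha/d\alpha\neq 0$ at $\alpha_0:=\alpha^{L/1}$ and select an open neighborhood $U\subset(\alpha_1,\alpha_2)$ of $\alpha_0$ on which $x^\alpha\subset\hat U$ for every $\alpha\in U$ and on which assumptions (K2), (A3), (A4) hold for the action-angle form of \eqref{mel-aut}.

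It then remains to exhibit, inside $U$, a key set $D\subset D_\R$ for $C^\omega(U)$ on which $M^{l/n}$ is non-constant, and to invoke Theorem~\ref{thm:sub-mel2} to conclude non-integrability in a neighborhood of $\{x^{\alpha_0}(t)\mid t\in[0,\hat T^{\alpha_0})\}\times\Sset^1\subset\hat U\times\Sset^1$, hence in $\hat U\times\Sset^1$ itself. This last construction is the main technical obstacle: the only resonant parameters at which I directly control the Melnikov function are the $\alpha^{l/1}$ with $l\ge L_0$, and these accumulate only at $\alpha_2$, which lies on $\partial U$ rather than in its interior, so Remark~\ref{rmk:5a}(i) does not apply straightforwardly. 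The route I plan to take is to choose $U$ so that $\alpha_2$ is its right endpoint and, within the contradiction hypothesis of Theorem~\ref{thm:main-integ}, to observe that the determinant-type analytic functions on $U$ whose vanishing on $D$ drives the contradiction are built from first integrals and commutative vector fields defined on all of $\hat U\times\Sset^1$; these analytic objects extend up to $\alpha_2$, so the accumulation of $D$ at $\alpha_2$ forces them to vanish identically on $U$ by the analytic identity theorem, delivering the required contradiction and completing the proof.
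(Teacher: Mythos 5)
Your overall route is the same as the paper's: transfer the non-constancy of $M(\phi)$ to $M^{l/1}(\phi)$ for all large $l$ via \eqref{eqn:limM}, locate the corresponding resonant tori inside $\hat U\times\Sset^1$, and feed them into Theorem~\ref{thm:sub-mel2}; the paper states this deduction in one line, citing Theorems~\ref{thm:sub-mel1} and \ref{thm:sub-mel2} together with Remark~\ref{rmk:5a}(i). You have correctly isolated the one genuinely delicate point: the only resonances at which the Melnikov function is controlled are the $\alpha^{l/1}$, and these accumulate only at $\alpha_2$, which cannot be an interior point of any admissible $U\subset(\alpha_1,\alpha_2)$. A set accumulating only at a boundary point is in general \emph{not} a key set for $C^\omega(U)$ --- for instance $\sin\bigl(\pi/(\alpha_2-\alpha)\bigr)$ is analytic on $(\alpha_1,\alpha_2)$ and vanishes on the sequence $\alpha_2-1/l$ without vanishing identically --- so Remark~\ref{rmk:5a}(i) indeed does not apply as stated.

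The repair you propose, however, does not close this gap. The functions whose vanishing on $D$ must be propagated --- the Jacobian of $\tilde G(I)$ and the determinant $\det\bigl(\rho_{jl}(I)\bigr)$ appearing in the proofs of Theorems~\ref{thm:main-fi} and \ref{thm:main-integ} --- are built from the hypothesized first integrals and commutative vector fields through the action--angle chart $\varphi$. Those objects are only assumed analytic on $\hat U\times\Sset^1$, and $\Gamma$ lies on $\partial\hat U$, not in $\hat U$; moreover the chart itself degenerates as $\alpha\to\alpha_2$ (the period $\hat T^\alpha$ blows up). Nothing in the hypotheses forces these determinant-type functions to extend analytically, or even continuously, to $\alpha=\alpha_2$, so the identity theorem cannot be invoked there, and the contradiction does not follow: an analytic function on $(\alpha',\alpha_2)$ may vanish at every $\alpha^{l/1}$ and still be nonzero almost everywhere, which is all that Bogoyavlenskij integrability requires of the linear-independence determinant. (An extension argument might be salvageable if integrability were assumed on a full neighborhood of $\Gamma\times\Sset^1$, cf.\ Remark~\ref{rmk:5b}(iii), but that is a different hypothesis from the one in the statement.) To be fair, the paper offers no more detail at this step than you do, so your write-up does not fall below its standard; but as a self-contained proof the final step needs an actual argument in place of the asserted extension to $\alpha_2$.
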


\begin{rmk}\
\label{rmk:5b}
\begin{enumerate}
\setlength{\leftskip}{-1.8em}
\item[(i)]
Theorems~\ref{thm:hom-mel1} and \ref{thm:hom-mel2}, respectively,
 mean that the system \eqref{mel-aut} has no first integral and is nonintegrable
 even if the Melnikov function $M(\phi)$ does not have a simple zero,
 i.e., there may exist no transverse homoclinic orbit to the periodic orbit in \eqref{mel-aut},
 but it is not identically zero and constant.
See Section~\ref{Duf}.
\item[(ii)]
As in Remark~\ref{rmk:5a}(ii), if the system \eqref{mel} is Hamiltonian,
 then the hypotheses of Theorems~\ref{thm:hom-mel1} and \ref{thm:hom-mel2}
 are equivalent to the condition that $M^{l/n}(\phi)$ has a simple zero.
\item[(iii)]
In the statements of Theorems~\ref{thm:hom-mel1} and \ref{thm:hom-mel2},
 the region $\hat{U}\times\Sset^1$ may be replaced with a neighborhood of $\Gamma\times\Sset^1$
 although they are weakened.
 \end{enumerate}
\end{rmk}

%\blue{In fact, as seen in Subsection~\ref{Duf}, nonintegrability can hold without a transverse homoclinic orbit.}
%On the other hand, when the system \eqref{mel-aut} is time-periodic Hamiltonian system, the hypothesies of Theorem~\ref{thm:hom-mel1}, \ref{thm:hom-mel2} are the same as the condition that $M(\phi)$ has a simple zero.\blue{\begin{prop}
%If Eq.~\eqref{mel} is a time-periodic Hamiltonian system, that is,
%there exist a time periodic function $H_\epsilon(x,t)=H(x)+\epsilon H^1(x,t;\epsilon)$ such that
% Eq.~\eqref{mel} is given by $\dot{x}=J\D_xH_\epsilon(x,t)$,
% and if the time average of $H^1(x,t;0)$ is $0$,
% then the time average of $M(\phi)$ is also.
% In particular, under the hypothesis stated above, the following three conditions are equivalent:
% \begin{itemize}
% 	\item[\rm(i)] $M(\phi)$ is not identically zero
%	\item[\rm(ii)] $M(\phi)$ is not constant
%	\item[\rm(iii)] $M(\phi)$ has a simple zero
% \end{itemize}
%\end{prop}
%\begin{proof}
%	 The proof is done similarly as in Proposition~\ref{prop:sub-mel}.
%\end{proof}
%}
%In other words, for Hamiltonian perturbations of single-degree-of-freedom systems of the form \eqref{mel-aut} whose time average coincides with the unperturbed Hamiltonian, the sufficient conditions of nonexistence of first integrals, that of nonintegrability and that of existence of transverse homoclinic orbits are the same.

\section{Examples}\label{examples}

We now illustrate the above theory for four examples:
 Simple pendulum with a constant torque, %general two-dimensional systems with elliptic equilibria,
 second-order coupled oscillators and the periodically forced Duffing oscillator \cite{GH83,H79,W90}.
% and a perturbed Kepler problem \cite{BUF97,OP18}

\subsection{Simple pendulum with a constant torque}
Consider a simple pendulum with a constant torque:
\begin{align}
\dot{I}=\epsilon (\beta\sin\theta +1),\quad \dot{\theta}=I, \quad (I,\theta)\in \Rset\times \Tset\label{simp-torque}
\end{align}
where $\beta\in\Rset$ is a constant.
Equation~\eqref{simp-torque} is of the form \eqref{eqn:aasys} with $m=\ell=1$.
Using Theorem~\ref{thm:4b}, we obtain the following.

\begin{prop}
The system \eqref{simp-torque} has no real-analytic first integral
 depending analytically on $\epsilon$ near $\epsilon=0$.
\end{prop}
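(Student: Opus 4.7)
The plan is to apply Theorem~\ref{thm:4b} directly, since the system \eqref{simp-torque} is already written in the action--angle form \eqref{eqn:aasys} with $\ell=m=1$, unperturbed angular frequency $\omega(I)=I$, and perturbation component
\[
h(I,\theta;0)=\beta\sin\theta+1.
\]
First I would read off the zeroth Fourier coefficient of $h$ with respect to $\theta$: averaging over $\theta\in\Sset^1$ gives
\[
\hat{h}_0(I)=\frac{1}{2\pi}\int_0^{2\pi}(\beta\sin\theta+1)\,d\theta=1,
\]
which is nonzero for every $I\in\Rset$.

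Next I would fix any $I_0\in\Rset\setminus\{0\}$ and let $U$ be a neighborhood of $I_0$ with $0\notin U$. On $U$ both $\omega(I)=I\neq 0$ and $\hat{h}_0(I)=1\neq 0$ hold, so the hypotheses of Theorem~\ref{thm:4b} are satisfied. The theorem then yields that \eqref{simp-torque} has no $\ell=1$ real-analytic first integral in a neighborhood of $\{I_0\}\times\Sset^1$ which is functionally independent (i.e., non-constant) for $|\epsilon|\neq 0$ and depends analytically on $\epsilon$ near $\epsilon=0$.

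Finally, to obtain the global statement of the proposition, I would observe that any real-analytic first integral $F(I,\theta;\epsilon)$ of \eqref{simp-torque} that depends analytically on $\epsilon$ near $\epsilon=0$ would, by restriction, give such a first integral in the neighborhood of $\{I_0\}\times\Sset^1$, contradicting the conclusion above unless $F$ is constant in $(I,\theta)$ for each $\epsilon$ near $0$; but a constant function is not a first integral in the required sense. There is essentially no obstacle here: the example is a direct illustration of Theorem~\ref{thm:4b}, and the only minor subtlety is that the theorem is stated locally near a level set, which is handled by the restriction argument. Note also that the nonzero constant term of $h$ is what drives the obstruction, reflecting the fact that the constant torque pumps energy into the pendulum and prevents any analytic conservation law from persisting.
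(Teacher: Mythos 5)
Your proposal is correct and follows exactly the paper's route: the paper proves this proposition by a direct appeal to Theorem~\ref{thm:4b}, and you have simply filled in the details (the computation $\hat{h}_0(I)=1$, the choice of $I_0\neq 0$ so that $\omega(I_0)=I_0\neq 0$, and the restriction argument reducing the global statement to the local one).
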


\begin{rmk}\
\begin{enumerate}
\setlength{\leftskip}{-2em}
\item[(i)]
The system \eqref{simp-torque} has the first integral
\[
F(I, \theta; \epsilon)=\frac{1}{2}I^2+\epsilon (\beta\cos\theta-\theta)
\]
and is $(1,1)$-integrable as a system on $\Rset\times\Rset$, although $F(I,\theta; \epsilon)$ is not even a function on $\Rset\times\Sset^1$.
\item[(ii)]
Let $\beta=0$.
Then the system \eqref{simp-torque} is $(2,0)$-integrable when $\epsilon\neq0$,
 where the vector fields $\epsilon\frac{\partial}{\partial I}+I\frac{\partial}{\partial \theta}$ and  $\frac{\partial}{\partial \theta}$ are commutative and linearly independent.
However, when $\epsilon=0$, the two vector fields are linearly dependent
 and Eq.~\eqref{simp-torque} is not $(2,0)$-integrable.
% which mean that bifurcations of commutative vector fields can occur.
See also Remark~\ref{rmk:2a}(ii).
\end{enumerate}
\end{rmk}

\subsection{Second-order coupled oscillators}
Consider
\begin{equation}
\begin{split}
\dot{I}_j=&\epsilon\left(-\delta I_j+\Omega_j
 +\sum_{i=1}^\ell \sum_{k\in\Nset^2}  a_{k}\sin(k_1\theta_j-k_2\theta_i)\right),\\
\dot{\theta}_j=&I_j,\quad j=1, \ldots, \ell,
\end{split}
\label{eqn:kuramoto-sys}
\end{equation}
where $\delta,\Omega_j\ge 0$, $j=1,\ldots,\ell$, and $a_{k}$, $k=(k_1,k_2)\in\Nset^2$, are constants
 such that $|a_k|\le Me^{-(k_1+k_2)\delta}$ for some $M,\delta>0$.
We see by the remark after Lemma~2 in Section~12 of Chapter~3 in \cite{A88}
 that the vector field of \eqref{eqn:kuramoto-sys} is analytic.
Equation~\eqref{eqn:kuramoto-sys} has the form \eqref{eqn:aasys} with $m=\ell$
 and is rewritten in a system of second-order differential equations as
\begin{align*}%\label{eqn:kuramoto}
\ddot{\theta}_j+\epsilon\delta\dot{\theta}
=\epsilon\left(\Omega_j+\sum_{i=1}^\ell \sum_{k\in \Nset^2}  a_{k}\sin(k_1\theta_j-k_2\theta_i)\right),
\quad j=1, \ldots, \ell,
\end{align*}
which reduces to the \emph{second-order Kuramoto model} \cite{RPJK16}
 when $a_k\neq 0$ for $k=(1,1)$ and $a_k=0$ for $k\neq (1,1)$.
Obviously, assumptions~(K2), (A3) and (A4) hold.
Using Theorems~\ref{thm:4a} and \ref{thm:4c}, we obtain the following.

\begin{prop}
The following statements hold for \eqref{eqn:kuramoto-sys}:
\begin{itemize}
\setlength{\leftskip}{-2em}
\item[(i)]
If one of $\delta$ and $\Omega_j$, $j=1, \ldots, \ell$, is nonzero at least, 
 then the system \eqref{eqn:kuramoto-sys} does not have $\ell$ real-analytic first integrals near $\epsilon=0$
 such that they are functionally independent for $|\epsilon|\neq 0$ and depend analytically on $\epsilon$;
\item[(ii)]
If $K_1=\{k_1/k_2\mid a_k,k_2\neq 0\}$ or $K_2=\{k_2/k_1\mid a_k,k_1\neq 0\}$ has an accumulation point,
then for $|\epsilon|\neq 0$ sufficiently small
 the system \eqref{eqn:kuramoto-sys} is not real-analytically integrable
 in the meaning of Theorem~\ref{thm:main-integ}.
\end{itemize}
\end{prop}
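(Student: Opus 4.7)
The plan is to verify that assumptions~(K2), (A3), and (A4) hold automatically for the system \eqref{eqn:kuramoto-sys} and then apply Theorems~\ref{thm:4a} and \ref{thm:4c} respectively to parts~(i) and (ii). Since the unperturbed frequency is $\omega(I)=I$ with $\D\omega(I)\equiv\mathrm{id}$, all three assumptions are immediate on any open $U\subset\Rset^\ell$. The Fourier expansion of $h(I,\theta;0)$ has only two kinds of nonzero coefficients: the zero mode
\[
\hat{h}_0(I)=(-\delta I_1+\Omega_1,\ldots,-\delta I_\ell+\Omega_\ell),
\]
and, for each pair $i\neq j$ and each $(k_1,k_2)\in\Nset^2$ with $a_{(k_1,k_2)}\neq 0$, the modes $r=\pm(k_1 e_j-k_2 e_i)$ whose $j$-th component is a nonzero constant multiple of $a_{(k_1,k_2)}$.

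For part~(i) I would invoke Remark~\ref{rmk:4a}(iii). The zero vector lies in every $\Lambda_I$, and $\hat{h}_0(I)$ is an affine function of $I$ that fails to vanish identically as soon as $\delta\neq 0$ or some $\Omega_j\neq 0$. Shrinking $U$ so that both $\omega(I)$ and $\hat{h}_0(I)$ are nowhere zero, Theorem~\ref{thm:4a} yields the claimed nonexistence of $\ell$ real-analytic first integrals.

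For part~(ii) I would apply Theorem~\ref{thm:4c} using a key set $D\subset D_\R$ built from resonance hyperplanes. Fix indices $i\neq j$ and let $\Sigma\subset\{(k_1,k_2)\in\Nset^2\colon a_{(k_1,k_2)}\neq 0,\ k_1 k_2\neq 0\}$ be a countable set whose ratios $k_2/k_1$ accumulate, which is supplied by the hypothesis on $K_1$ or $K_2$. For each $(k_1,k_2)\in\Sigma$ the hyperplane $H_{(k_1,k_2)}=\{I\in U\colon k_1 I_j=k_2 I_i\}$ carries the nontrivial resonance $r=k_1 e_j-k_2 e_i\in\Lambda_I\setminus\{0\}$ with $\hat{h}_r(I)\neq 0$; moreover, since $H_{(k_1,k_2)}$ is a rational hyperplane, real multiples of its integer points are dense in $H_{(k_1,k_2)}\cap U$, so $D_\R$ is dense in each such slice. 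I would then define
\[
D:=\bigcup_{(k_1,k_2)\in\Sigma}\bigl(H_{(k_1,k_2)}\cap D_\R\bigr).
\]

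The main technical step, which I expect to be the only nontrivial one, is showing that $D$ is a key set for $C^\omega(U)$. If $f\in C^\omega(U)$ vanishes on $D$, then by analyticity along each connected $H_{(k_1,k_2)}\cap U$ together with the density of $D_\R$ inside it, $f$ vanishes identically on every such hyperplane. Restricting $f$ to a two-dimensional affine slice obtained by freezing all coordinates other than $I_i$ and $I_j$ yields an analytic function vanishing on infinitely many lines through the origin whose slopes accumulate; the substitution $(s,t)\mapsto(t,st)$ combined with the one-variable identity theorem then forces this restriction to vanish on the slice. Since the slice is arbitrary, $f\equiv 0$ on $U$, so $D$ is indeed a key set and Theorem~\ref{thm:4c} delivers part~(ii). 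Any concern about the substitution on the coordinate hyperplanes $\{I_i=0\}$ or $\{I_j=0\}$ is handled by choosing $U$ slightly away from them.
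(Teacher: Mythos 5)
Your proposal is correct and follows essentially the same route as the paper: part~(i) is exactly Remark~\ref{rmk:4a}(iii) combined with Theorem~\ref{thm:4a}, and part~(ii) builds a key set out of the resonance hyperplanes $\{k_1I_j=k_2I_i\}$ with accumulating slopes and applies Theorem~\ref{thm:4c}. The only differences are minor: the paper verifies the key-set property more directly by restricting to a single line $L_b=\{(I_1,\ldots,I_{\ell-1})=b\}$, on which the intersection points $(b_1,\ldots,b_{\ell-1},k_1b_1/k_2)$ accumulate so the one-variable identity theorem applies at once (avoiding your two-dimensional substitution $(s,t)\mapsto(t,st)$ and its boundary caveats), while you are in fact more careful than the paper on one point, namely intersecting the hyperplanes with $D_\R$ and checking density there so that the key set genuinely lies in $D_\R$ as Theorem~\ref{thm:4c} requires.
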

\begin{proof}
Part (i) immediately follows from Theorem~\ref{thm:4a} and Remark~\ref{rmk:4a}(iii)
 since $\hat{h}_0(I)$ is not identically zero
 if one of $\delta$ and $\Omega_j$, $j=1, \ldots, \ell$, is nonzero at least.

We turn to the proof of part (ii).
%Obviously, the system~\eqref{eqn:kuramoto-sys} satisfies (K2), (A3) and (A4).
Let $D_1=\{I\in\Rset^\ell\mid k_1I_j-k_2I_i=0,\  i,j=1,\dots,\ell, \  k_1/k_2\in K_1\}$
 and $D_2=\{I\in\Rset^\ell\mid k_1I_j-k_2I_i=0,\  i,j=1,\dots,\ell, \  k_2/k_1\in K_2\}$.
If $K_1$ (resp. $K_2$) has an accumulation point,
 then $D_1$ (resp. $D_2$) is a key set for $C^\omega(\Rset^\ell)$.
Their claim is shown as follows.
Assume that $K_1$ has an accumulation point. Let $f(I)\in C^\omega(\Rset^\ell)$ be an analytic function which vanishes on $D_1$, and take a line $L_b:=\{I\in\Rset^\ell\mid (I_1,\ldots, I_{\ell-1})=b\}$ for $b=(b_1, \ldots,b_{\ell-1})\in\Rset^{\ell-1}$ fixed . 
Then $(b_1, \ldots,b_{\ell-1}, k_1b_1/k_2) \in L_b\cap D_1$ for all $k_1/k_2\in K_1$, so that $f(I)$ is identically zero on $L_b$.
This means that $f(I)$ is identically zero in $\Rset^{\ell}$, and consequently $D_1$ is a key set for $C^\omega(\Rset^\ell)$.
Similarly, we see that the claim is true for $K_2$ and $D_2$.
Applying Theorem~\ref{thm:4c}, we obtain the desired result.
\end{proof}

\subsection{Periodically forced Duffing oscillator}\label{Duf}

\begin{figure}[t]
\includegraphics[scale=0.27,bb=0 0 690 572]{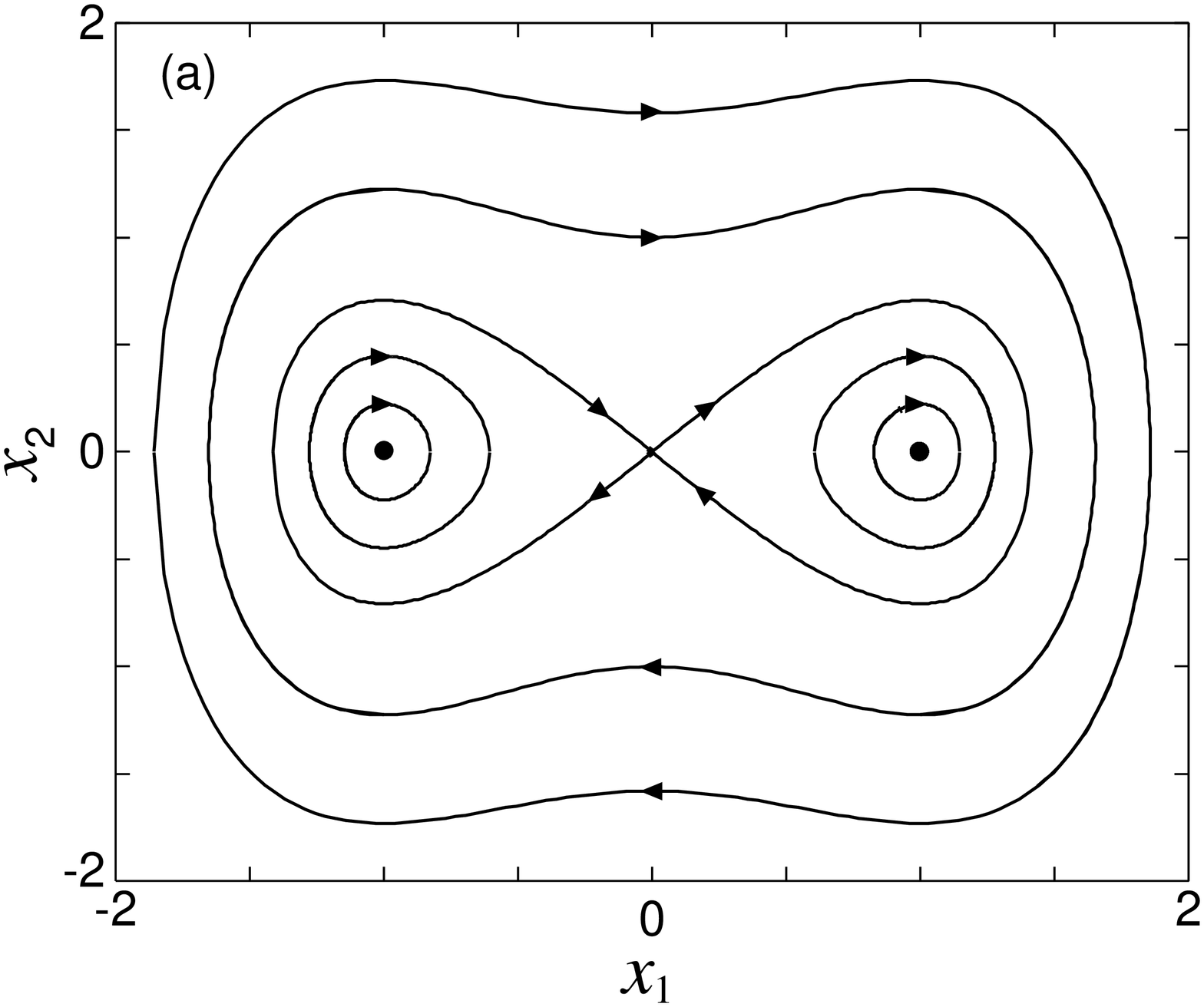}\quad
\includegraphics[scale=0.28,bb=0 0 559 551]{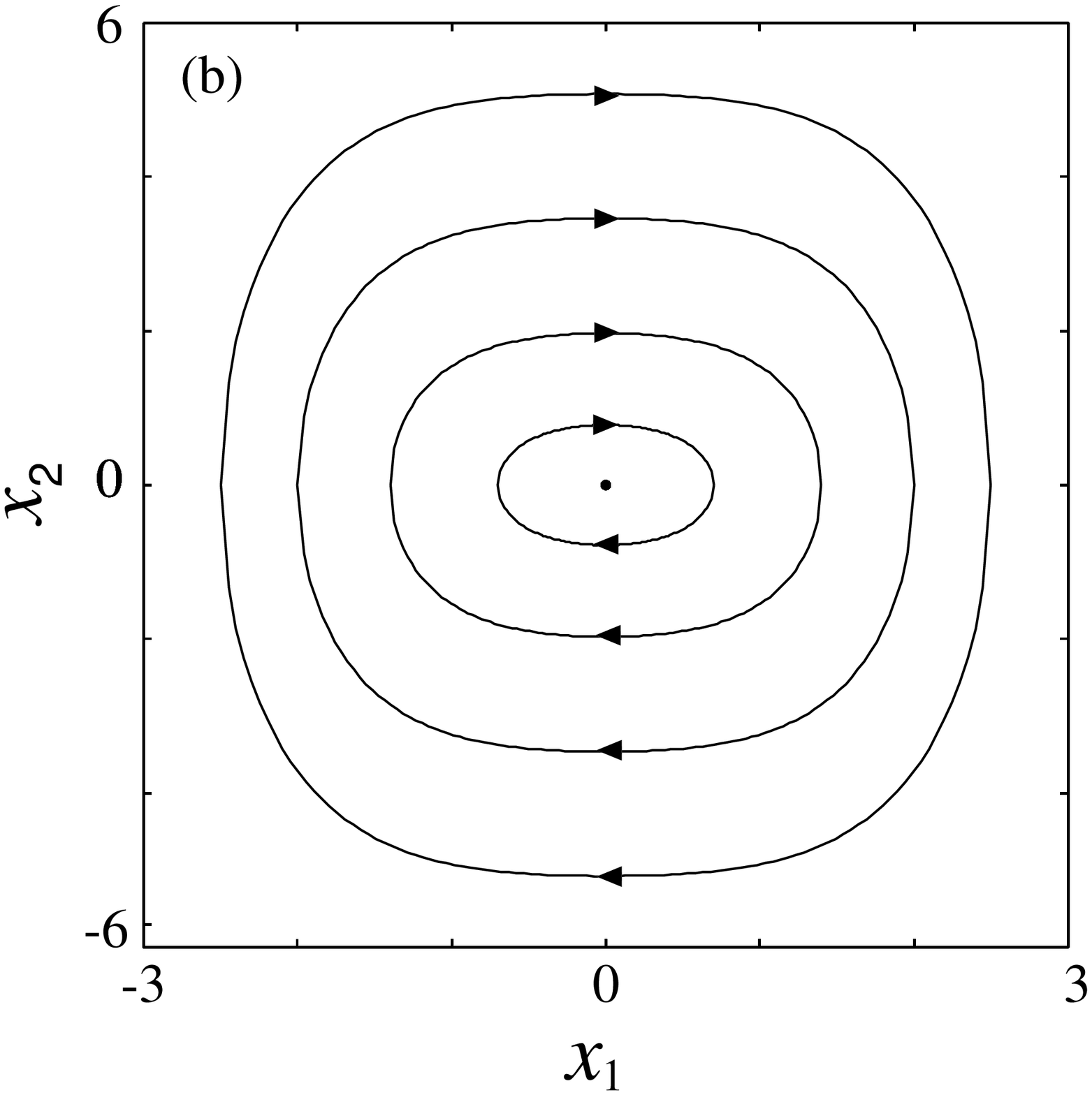}
\caption{Phase portraits of \eqref{Duffing} with $\epsilon=0$: (a) $a=1$; (b) $a=-1$.
\label{fig:Duffing}}
\end{figure}

Consider the periodically forced Duffing oscillator
\begin{equation}\label{Duffing}
\dot{x}_1=x_2,\quad
\dot{x}_2=ax_1-x_1^3+\epsilon(\beta\cos\nu t-\delta x_2),
\end{equation}
where $\nu>0$ and $\beta,\delta\ge 0$ are constants, and $a=-1$ or $1$.
The system~\eqref{Duffing} has the form \eqref{mel} with
\[
H=-\frac{1}{2} ax_1^2+\frac{1}{4}x_1^4+\frac{1}{2}x_2^2
\]
and the autonomous system \eqref{mel-aut} becomes
\begin{align}\label{DuffingAut}
\dot{x}_1=x_2,\quad
\dot{x}_2=ax_1-x_1^3+\epsilon(\beta\cos\theta-\delta x_2),\quad
\dot{\phi}=\nu,
\end{align}
where $(x,\phi)\in\Rset^2\times \Sset$.
See Fig.~\ref{fig:Duffing} for the phase portraits of \eqref{Duffing} with $\epsilon=0$.

We begin with the case of  $a=1$.
When $\epsilon=0$, in the phase plane there exist a pair of homoclinic orbits
\[
x^\h_\pm(t)=(\pm\sqrt{2}\sech t, \mp\sqrt{2}\sech t\,\tanh t),
\]
a pair of one-parameter families of periodic orbits
\begin{align*}
x^k_{\pm}(t)
 =&\biggl(\pm\frac{\sqrt{2}}{\sqrt{2-k^2}}\dn\left(\frac{t}{\sqrt{2-k^2}}\right),\\
& \quad\mp\frac{\sqrt{2}k^2}{2-k^2}\sn\left(\frac{t}{\sqrt{2-k^2}}\right)
\cn\left(\frac{t}{\sqrt{2-k^2}}\right)\biggr),\quad
k\in(0,1),
\end{align*}
inside each of them, and a one-parameter periodic orbits
\begin{align*}
\tilde{x}^k(t)
 =&\biggl(\frac{\sqrt{2}k}{\sqrt{2k^2-1}}\cn\left(\frac{t}{\sqrt{2k^2-1}}\right),\\
& \quad -\frac{\sqrt{2}k}{2k^2-1}\sn\left(\frac{t}{\sqrt{2k^2-1}}\right) 
 \dn\left(\frac{t}{\sqrt{2k^2-1}}\right)\biggr),\quad
k\in\bigl(1/\sqrt{2},1\bigr),
\end{align*}
outside of them, as shown in Fig.~\ref{fig:Duffing}(a),
 where $\sn$, $\cn$ and $\dn$ represent the Jacobi elliptic functions with the elliptic modulus $k$.
The periods of $x^k_{\pm}(t)$ and $\tilde{x}^k(t)$ are given
 by $\hat{T}^k=2K(k)\sqrt{2-k^2}$ and $\tilde{T}^k=4K(k)\sqrt{2k^2-1}$, respectively,
 where $K(k)$ is the complete elliptic integral of the first kind.
Note that $x^k_{\pm}(t)$ approaches $x^\h_{\pm}(t)$ as $k\to 1$.
 %, i.e., $\lim_{k\to 1} q^k_{\pm}(t)= q^\h_{\pm}(t)$.
See \cite{GH83,W90}.
See also \cite{BF54} for general information on elliptic functions.

Assume that the resonance conditions
\begin{equation}
l\hat{T}^k=\frac{2\pi n}{\nu},\quad\mbox{i.e.,}\quad
\nu=\frac{2\pi n}{2lK(k)\sqrt{2-k^2}},
\label{eqn:resk}
\end{equation}
and
\begin{equation}
l\tilde{T}^k=\frac{2\pi n}{\nu},\quad\mbox{i.e.,}\quad
\nu=\frac{2\pi n}{4lK(k)\sqrt{2k^2-1}},
\label{eqn:tresk}
\end{equation}
hold for $x_\pm^k(t)$ and $\tilde{x}^k(t)$, respectively,
 with $l,n>0$ relatively prime integers.
Then the subharmonic Melnikov function \eqref{eqn:subM}
 for $x_\pm^k(t)$ and $\tilde{x}^k(t)$ are
\[
M_\pm^{n/l}(\tau)=-\delta J_1(k,l)\pm\beta J_2(k,n,l)\sin\tau
\]
and
\[
\tilde{M}^{n/l}(\tau)=-\delta\tilde{J}_1(k,l)+\beta\tilde{J}_2(k,n,l)\sin\tau,
\]
respectively, where
\begin{align*}
&
J_1(k,l)=\frac{4l[(2-k^2)E(k)-2k'^2K(k)]}{3(2-k^2)^{3/2}},\\
&
J_2(k,n,l)=
\begin{cases}
\sqrt{2}\pi \nu\sech\left(\displaystyle\frac{n\pi K(k')}{K(k)}\right) & \mbox{(for $l=1$)};\\
0\quad & \mbox{(for $l\neq 1$)},\
\end{cases}\\
&
\tilde{J}_1(k,l)=\frac{8l[(2k^2-1)E(k)+k'^2K(k)]}{3(2k^2-1)^{3/2}},\\
&
\tilde{J}_2(k,n,l)=
\begin{cases}
2\sqrt{2}\pi \nu\sech\left(\displaystyle\frac{n\pi K(k')}{2K(k)}\right)
 & \mbox{(for $l=1$ and $n$ odd)}; \\
0 & \mbox{(for $l\neq 1$ or $n$ even).}
\end{cases}
\end{align*}
Here $E(k)$ is the complete elliptic integral of the second kind 
 and $k'=\sqrt{1-k^2}$ is the complimentary elliptic modulus.
When $\delta\neq 0$, %we easily see that
 the subharmonic Melnikov functions $M_\pm^{n/l}(\tau)$ and $\tilde{M}^{n/l}(\tau)$
 are not identically zero for any relatively prime integers $n,l>0$
 since $J_1(k,l)$ and $\tilde{J}_1(k,l)$ are not zero.
Moreover, the homoclinic Melnikov function \eqref{eqn:homM} for $x_\pm^\h(t)$ is
\[
M_\pm(\tau)
 =-\frac{4}{3}\delta\pm\sqrt{2}\pi\nu\beta\csch\left(\frac{\pi\nu}{2}\right)\sin\tau,
\]
which is not identically zero for $\beta\neq0$.
See \cite{GH83,W90} for the computations of the Melnikov functions.

Let
\begin{align*}
&
R=\{k\in(0,1)\mid \mbox{$k$ satisfies \eqref{eqn:resk} for $n,l\in\Nset$}\},\\
%&
%R_1=\{k\in(0,1)\mid \mbox{$k$ satisfies \eqref{eqn:resk} with $l=1$ for $n\in\Nset$ }\},\\
&
\tilde{R}=\bigl\{k\in\bigl(1/\sqrt{2},1\bigr)\mid
\mbox{$k$ satisfies \eqref{eqn:tresk} for $n,l\in\Nset$}\bigr\},%\\
%&
%\tilde{R}_{\mathrm{o}}=\bigl\{k\in\bigl(1/\sqrt{2},1\bigr)\mid
%\mbox{$k$ satisfies \eqref{eqn:tresk} with $l=1$ and $n$ odd}\bigr\},
\end{align*}
and let
\begin{align*}
&S_\pm^k=\{(x_\pm^k(t),\theta)\in\Rset^2\times\Sset^1\mid t\in[0,\hat{T}^k),\theta\in\Sset^1\},\\
&
\tilde{S}^k=\{(\tilde{x}^k(t),\theta)\in\Rset^2\times\Sset^1\mid t\in[0,\tilde{T}^k),\theta\in\Sset^1\},\\
&
%S_\pm^\h=\{(x_\pm^\h(t),\theta)\in\Rset^2\times\Sset^1\mid (t,\theta)\in\Rset\times\Sset\}.
\Gamma_\pm=\{x_\pm^\h(t)\in\Rset^2\mid t\in\Rset\}\cup\{0\}.
\end{align*}
Noting that
\[
\lim_{n\to\infty}\tilde{M}^{2n+1/1}(\tau)=M_+(\tau)+M_-(\tau)
\]
and applying Theorems~\ref{thm:sub-mel1}, \ref{thm:hom-mel1}, \ref{thm:hom-mel2} and their slight extensions,
 we have the following.

\begin{prop}
\label{prop:Duf1}
The system~\eqref{DuffingAut} with $a=1$ has no real-analytic first integral
depending analytically on $\epsilon$ in neighborhoods of $S_\pm^k$ for $k\in R$,
 of $\tilde{S}^k$ for $k\in\tilde{R}$, and of $S_\pm^\h$ near $\epsilon=0$
if $\delta\neq0$.
\end{prop}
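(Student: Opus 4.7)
The plan is to reduce the three claims to the Melnikov-based Theorems~\ref{thm:sub-mel1}, \ref{thm:hom-mel1} and \ref{thm:hom-mel2}, using the explicit formulas for the subharmonic and homoclinic Melnikov functions that are already assembled just before the statement. The autonomous system \eqref{DuffingAut} already has the form \eqref{mel-aut} with $\phi = \nu t$, so the general machinery of Section~\ref{Melnikov} applies directly with the parameter $\alpha = k$ indexing the periodic families.

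For the claim near $S_\pm^k$ with $k \in R$: I would first verify (M1) and (M2). The orbits $x_\pm^k(t)$ are given by explicit Jacobi elliptic expressions, hence analytic in $k \in (0,1)$, and $\hat{T}^k = 2K(k)\sqrt{2-k^2}$ is manifestly non-constant (it diverges as $k\to 1^-$). Fix any $k_0 \in R$ with $d\hat{T}^k/dk \neq 0$ and take $U$ to be a small neighborhood of $k_0$. Because $\hat{T}^k$ is an analytic, non-constant function on $U$, the image $\hat{T}^k(U)$ is an open interval; for each fixed $l \geq 1$, the equation $\hat{T}^k = 2\pi n/(l\nu)$ then has infinitely many solutions in $U$ as $n$ varies, and these solutions accumulate in $U$. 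By Remark~\ref{rmk:5a}(i) the subset $R \cap U$ is therefore a key set for $C^\omega(U)$. For each $k \in R$, the Melnikov function
\[
M_\pm^{n/l}(\tau) = -\delta J_1(k,l) \pm \beta J_2(k,n,l)\sin\tau
\]
has nonzero mean $-\delta J_1(k,l)$ whenever $\delta \neq 0$, since $(2-k^2)E(k) - 2k'^2 K(k) > 0$ for $k \in (0,1)$; in particular $M_\pm^{n/l}$ is not identically zero. Theorem~\ref{thm:sub-mel1} then yields the nonexistence of a real-analytic first integral depending analytically on $\epsilon$ in a neighborhood of $S_\pm^k$. The argument for $\tilde{S}^k$ with $k \in \tilde{R}$ is parallel, using $\tilde{T}^k = 4K(k)\sqrt{2k^2 - 1}$ (non-constant, tending to $\infty$ as $k \to 1^-$) and the fact that the mean $-\delta \tilde{J}_1(k,l)$ of $\tilde{M}^{n/l}$ is nonzero for $\delta \neq 0$.

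For the homoclinic region $S_\pm^\h$, I would apply Theorem~\ref{thm:hom-mel1} to a neighborhood of $\Gamma_\pm \times \Sset^1$, invoking the extension in Remark~\ref{rmk:5b}(iii). The homoclinic Melnikov function
\[
M_\pm(\tau) = -\tfrac{4}{3}\delta \pm \sqrt{2}\pi\nu\beta \csch\!\left(\tfrac{\pi\nu}{2}\right)\sin\tau
\]
has mean $-\tfrac{4}{3}\delta \neq 0$ when $\delta \neq 0$, so it is not identically zero regardless of whether $\beta$ vanishes. Assumption (M3) is immediate from the phase portrait in Fig.~\ref{fig:Duffing}(a): the periodic orbits $x_\pm^k(t)$ converge uniformly in $t$ to the homoclinic loop $\Gamma_\pm$ as $k \to 1^-$. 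Thus Theorem~\ref{thm:hom-mel1} yields the result.

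The main obstacle I foresee is the key-set step for $R$ and $\tilde{R}$: one must argue that the countable union of level sets of $\hat{T}^k$ (resp.\ $\tilde{T}^k$) produced by varying $(n,l)$ really does accumulate inside an arbitrary chosen open $U$. This is not automatic from countability alone, but it follows cleanly from the non-constancy and analyticity of the period functions combined with the density of $\{n/(l\nu) : n,l \in \Nset\}$ in $(0,\infty)$, so the step is short but worth stating carefully; the verifications of the explicit Melnikov formulas and of (M1)–(M3) are routine and already tabulated in the excerpt.
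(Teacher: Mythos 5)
Your proposal is correct and takes essentially the same route as the paper, which proves the statement simply by invoking Theorems~\ref{thm:sub-mel1} and \ref{thm:hom-mel1} (with Remark~\ref{rmk:5b}(iii)) together with the observation that the means $-\delta J_1(k,l)$, $-\delta\tilde{J}_1(k,l)$ and $-\tfrac{4}{3}\delta$ of the relevant Melnikov functions are nonzero when $\delta\neq 0$. The only slip is your intermediate claim that for each \emph{fixed} $l$ the resonance equation has infinitely many solutions in the bounded neighborhood $U$ --- that is not literally true, and the accumulation of $R\cap U$ (resp.\ $\tilde{R}\cap U$) comes from letting $l$ vary, exactly as you correctly note at the end via the density of $\{n/(l\nu)\mid n,l\in\Nset\}$ and the non-constancy of the period function.
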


\begin{prop}
\label{prop:Duf2}
Let $\hat{U}_\pm$ (resp. $\tilde{U})$ be regions (resp. a region) in $\Rset^2$
 such that $\partial\hat{U}_\pm\supset\Gamma_\pm$ (resp. $\partial\tilde{U}\supset\Gamma_+\cup\Gamma_-$)
 %where $\Gamma_\pm^\p=\{x_\pm^k(t)\mid t\in[0,\hat{T}^\alpha] \}$
 %(resp. $\tilde{\Gamma}^\p=\{\tilde{x}^k(t)\mid t\in[0,\tilde{T}^\alpha] \}$)
 and $\hat{U}_\pm\supset\{x_\pm^k(t)\mid t\in[0,\hat{T}^\alpha) \}$
 (resp. $\tilde{U}\supset\{\tilde{x}^k(t)\mid t\in[0,\tilde{T}^k)\}$
for some $k\in(0,1)$ (resp. $k\in(1/\sqrt{2},1)$).
For $|\epsilon|\neq 0$ sufficiently small
 the system~\eqref{DuffingAut} with $a=1$ is not real-analytically integrable
in the regions $\hat{U}_\pm\times\Sset^1$ (resp. $\tilde{U}\times\Sset^1$) %near $S_\pm^\h$
 in the meaning of Theorem~\ref{thm:main-integ} if $\beta\neq0$.
\end{prop}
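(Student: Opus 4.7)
The plan is to prove Proposition~\ref{prop:Duf2} by invoking the Melnikov-based nonintegrability criteria Theorem~\ref{thm:hom-mel2} and Theorem~\ref{thm:sub-mel2}, together with the region-enlargement noted in Remark~\ref{rmk:5b}(iii). The relevant Melnikov integrals have already been computed in the discussion preceding Proposition~\ref{prop:Duf1}, so the task is essentially one of checking hypotheses and handling the outer region carefully.

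For the inner regions $\hat{U}_\pm\times\Sset^1$, I would apply Theorem~\ref{thm:hom-mel2} directly with $\Gamma=\Gamma_\pm$, $x^\h=x^\h_\pm(t)$, and the subharmonic family $x^\alpha=x_\pm^k(t)$. The hypothesis that $\partial\hat U_\pm\supset\Gamma_\pm$ and that $\hat U_\pm$ contains some inner periodic orbit $x_\pm^k(t)$ is exactly what the proposition provides. It then suffices to observe that the homoclinic Melnikov function
\[
M_\pm(\tau)=-\tfrac{4}{3}\delta\pm\sqrt{2}\pi\nu\beta\csch(\pi\nu/2)\sin\tau
\]
has oscillatory coefficient $\pm\sqrt{2}\pi\nu\beta\csch(\pi\nu/2)\neq 0$ when $\beta\neq 0$ (since $\csch$ never vanishes), hence $M_\pm(\tau)$ is not constant. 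Theorem~\ref{thm:hom-mel2} then gives real-analytic nonintegrability of \eqref{DuffingAut} in $\hat{U}_\pm\times\Sset^1$ in the sense of Theorem~\ref{thm:main-integ}.

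For the outer region $\tilde{U}\times\Sset^1$, I would use the subharmonic family $\tilde x^k(t)$, which accumulates on the figure-eight $\Gamma_+\cup\Gamma_-\subset\partial\tilde U$ as $k\to 1$. For $l=1$ and any odd $n$, the already-computed subharmonic Melnikov function
\[
\tilde M^{n/1}(\tau)=-\delta\tilde J_1(k,1)+2\sqrt{2}\pi\nu\beta\sech\!\Big(\tfrac{n\pi K(k')}{2K(k)}\Big)\sin\tau
\]
is not constant when $\beta\neq 0$, because $\sech$ never vanishes. The resonant values $k=k_n$ determined by \eqref{eqn:tresk} with $l=1$ and $n$ odd form an infinite sequence in $(1/\sqrt{2},1)$ accumulating at $k=1$. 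Since the hypothesis on $\tilde U$ ensures that $\tilde x^k\subset \tilde U$ for all $k$ in a one-sided neighborhood of $1$, infinitely many of the $k_n$'s correspond to periodic orbits sitting in $\tilde U\times\Sset^1$. I would then apply Theorem~\ref{thm:sub-mel2} (with the accumulation argument of Remark~\ref{rmk:5a}(i) producing the required key set) to obtain nonintegrability in some open neighborhood of $\{\tilde x^{k_n}(t)\}\times\Sset^1$ inside $\tilde U\times\Sset^1$; since integrability on $\tilde U\times\Sset^1$ would restrict to integrability on any open subset, the nonintegrability propagates to all of $\tilde U\times\Sset^1$.

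The main obstacle is the outer case: the resonant parameters $k_n$ accumulate at the boundary point $k=1$ of the allowed interval $(1/\sqrt{2},1)$, not at an interior point, so feeding them into Theorem~\ref{thm:sub-mel2} as a key set requires the extension flagged in Remark~\ref{rmk:5b}(iii), which replaces ``a neighborhood of a single periodic orbit'' by ``a neighborhood of the limiting invariant set'' (here, the figure-eight). Equivalently, one passes to the limit $\lim_{n\to\infty}\tilde M^{2n+1/1}(\tau)$ already employed in the proof of Proposition~\ref{prop:Duf1} and argues as in Theorem~\ref{thm:hom-mel2}: from the non-constancy of $\tilde M^{n/1}$ for all large odd $n$ together with this limit relation, one concludes nonintegrability in a neighborhood of $(\Gamma_+\cup\Gamma_-)\times\Sset^1$, which is then widened to $\tilde U\times\Sset^1$ by the observation above about restriction of integrability.
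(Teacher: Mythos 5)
Your proposal is correct and takes essentially the same route as the paper, whose own ``proof'' consists of the single remark that one applies Theorems~\ref{thm:sub-mel2} and \ref{thm:hom-mel2} ``and their slight extensions'' (the extension being exactly the figure-eight adaptation you identify for the outer family $\tilde{x}^k$). One point worth keeping in your write-up: your direct check that $\tilde{M}^{n/1}(\tau)$ is non-constant for odd $n$ because $\sech\neq 0$ is the essential step for the outer region, since the limit $M_+(\tau)+M_-(\tau)=-\tfrac{8}{3}\delta$ is constant and therefore cannot by itself deliver the non-constancy required by a purely homoclinic-limit argument.
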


If $\beta\neq0$ and
\begin{align}\label{eqn:ineq}
\frac{\delta}{\beta}<\frac{3}{4}\sqrt{2}\pi\nu\csch \left(\frac{\pi\nu}{2}\right),
\end{align}
then $M_\pm(\tau)$ has a simple zero,
 so that for $|\epsilon|>0$ sufficiently small there exist transverse homoclinic orbits
 to a periodic orbit near the origin and chaotic dynamics may occur
 in \eqref{DuffingAut} with $a=1$, as stated in Section~\ref{Melnikov}.
From Proposition~\ref{prop:Duf2} we see that the system \eqref{DuffingAut} is nonintegrable
 in the meaning of Theorem~\ref{thm:main-integ}
 even if condition \eqref{eqn:ineq} does not hold, i.e., 
 there may exist no transverse homoclinic orbit to the periodic orbit, as stated in Remark~\ref{rmk:5b}(i).
On the other hand,  when the system \eqref{Duffing} is Hamiltonian, i.e., $\delta=0$,
 condition \eqref{eqn:ineq} always holds and such inconsistency does not occur.
See also Remark~\ref{rmk:5b}(ii).

%\begin{rmk}
%By Remark~\ref{rmk:4a} (ii), when $\beta\neq0$ with $a=1$, the Poincar\'e set $\P_0$ for the system in the action-angle coordinates corresponding to Eq.~\eqref{DuffingAut} is a key set.
%\end{rmk}

We turn to the case of $a=-1$.
When $\epsilon=0$, in the phase plane
 there exists a one-parameter family of periodic orbits
\begin{align*}
\hat{x}^k(t)
 =&\biggl(\frac{\sqrt{2}k}{\sqrt{1-2k^2}}\cn\left(\frac{t}{\sqrt{1-2k^2}}\right),\\
& \quad-\frac{\sqrt{2}k}{1-2k^2}\sn\left(\frac{t}{\sqrt{1-2k^2}}\right)
\dn\left(\frac{t}{\sqrt{1-2k^2}}\right)\biggr),\quad
k\in\bigl(0,1/\sqrt{2}\bigr),
\end{align*}
as shown in Fig.~\ref{fig:Duffing}(b),
 and their period is given by $\hat{T}^k=4K(k)\sqrt{1-2k^2}$.
See \cite{Y94,Y96}.
Assume that the resonance conditions
\begin{equation}
l\hat{T}^k=\frac{2\pi n}{\nu},\quad\mbox{i.e.,}\quad
\nu=\frac{\pi n}{2lK(k)\sqrt{1-2k^2}}
\label{eqn:hresk}
\end{equation}
holds for $l,n>0$ relatively prime integers.
We compute the subharmonic Melnikov function \eqref{eqn:subM}
 for $\hat{x}^k(t)$ as
\[
\hat{M}^{n/l}(\tau)=-\delta\hat{J}_1(k,l)\pm\beta\hat{J}_2(k,n,l)\sin\tau,
\]
where
\begin{align*}
&
\hat{J}_1(k,l)=\frac{8l[(2k^2-1)E(k)+k'^2K(k)]}{3(1-2k^2)^{3/2}},\\
&
\hat{J}_2(k,n,l)=
\begin{cases}
\displaystyle
\frac{\sqrt{2}\pi^2n}{K(k)\sqrt{1-2k^2}}
\sech\left(\frac{\pi nK(k')}{2K(k)}\right) & \mbox{(for $l=1$ and $n$ odd)};\\
0\quad & \mbox{(for $l\neq 1$ or $n$ even)}.
\end{cases}
\end{align*}
See also \cite{Y94,Y96} for the computations of the Melnikov function.
When $\delta\neq 0$, the subharmonic Melnikov function $\hat{M}^{n/l}(\tau)$ is not identically zero
 for any relatively prime integers $n,l>0$ since $J_1(k,l)$ is not zero.

Let
\begin{align*}
&
\hat{R}=\bigl\{k\in\bigl(0,1/\sqrt{2}\bigr)\mid
 \mbox{$k$ satisfies \eqref{eqn:hresk} for $n,l\in\Nset$}\bigr\},%\\
%&
%\hat{R}_{\mathrm{o}}=\bigl\{k\in\bigl(0,1/\sqrt{2}\bigr)\mid
% \mbox{$k$ satisfies \eqref{eqn:hresk} with $l=1$ and $n$ odd}\bigr\},
\end{align*}
and let
\[
\hat{S}^k=\{(\hat{x}^k(t),\theta)\in\Rset^2\times\Sset^1\mid t\in[0,\hat{T}^k),\theta\in\Sset^1\}.
\]
Applying Theorem~\ref{thm:sub-mel1}, we obtain the following.

\begin{prop}\label{prop:5b}
The system~\eqref{DuffingAut} with $a=-1$
 has no real-analytic first integral depending analytically on $\epsilon$
 in a neighborhood of $\hat{S}^k$ for $k\in\hat{R}$ %(resp. $k\in\hat{R}_{\mathrm{o}}$)
 near $\epsilon=0$
 if $\delta\neq 0$%(resp. $\beta\neq 0$)
.
%  In particular, Eq.~\eqref{Duffing} with $a=-1$ and $\delta\neq 0$ for $|\epsilon|\neq 0$ sufficiently small has no \blue{real} analytic first integral
%depending on $\epsilon$ analytically.
\end{prop}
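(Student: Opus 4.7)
The plan is to apply Theorem~\ref{thm:sub-mel1} to the transformed autonomous system \eqref{DuffingAut} with $a=-1$, with the role of the parameter $\alpha$ played by the elliptic modulus $k$ that labels the family $\hat{x}^k(t)$. Fix a reference modulus $k_0 \in (0,1/\sqrt{2})$ and let $U$ be a neighborhood of $k_0$ in $(0,1/\sqrt{2})$. The resonant set relevant to Theorem~\ref{thm:sub-mel1} is $D_\R = \hat{R} \cap U$.

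First I would check the hypotheses corresponding to (M1) and (M2). The period $\hat{T}^k = 4K(k)\sqrt{1-2k^2}$ depends analytically and non-trivially on $k\in(0,1/\sqrt2)$, and $\hat{x}^k(t)$ depends analytically on $k$, so (M1) and (M2) hold on a neighborhood of $k_0$ where $d\hat{T}^k/dk\neq 0$; without loss of generality we may shrink $U$ so that this holds throughout $U$.

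Next I would verify that $\hat{R}\cap U$ is a key set for $C^\omega(U)$. Since $\hat{T}^k$ is a non-constant analytic function of $k$ on $U$, the image $\hat{T}^k(U)$ contains an open interval, hence the resonance condition \eqref{eqn:hresk} is satisfied by a set of $k$-values that is dense in $U$ (for each pair of relatively prime $n,l$ one solves $\nu = \pi n/(2 l K(k)\sqrt{1-2k^2})$, and varying $n,l$ over the rationals produces a dense set of resonant moduli). In particular $\hat{R}\cap U$ has an accumulation point in $U$, so by Remark~\ref{rmk:5a}(i) it is a key set for $C^\omega(U)$.

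Finally I would show that the subharmonic Melnikov function $\hat{M}^{n/l}(\tau) = -\delta\hat{J}_1(k,l)\pm\beta\hat{J}_2(k,n,l)\sin\tau$ is not identically zero at every $k\in \hat{R}\cap U$ when $\delta\neq 0$. This follows because its constant term $-\delta\hat{J}_1(k,l)$ is nonzero: the bracket $(2k^2-1)E(k)+k'^2K(k)$ appearing in $\hat{J}_1(k,l)$ is strictly positive for $k\in(0,1/\sqrt{2})$, as can be verified by computing its value at $k=0$ and its derivative (or by recognizing it as a positive multiple of $l \hat{T}^k$ times the time-average of $\dot{x}_2^2$ along $\hat{x}^k$, which yields positivity directly). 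Hence Theorem~\ref{thm:sub-mel1} applies and gives the claimed nonexistence of a real-analytic first integral depending analytically on $\epsilon$ in a neighborhood of $\hat{S}^{k_0}$. Since $k_0\in\hat{R}$ was arbitrary, the statement follows.

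The main obstacle is the positivity/nonvanishing of $\hat{J}_1(k,l)$ on $(0,1/\sqrt{2})$; the density of $\hat{R}$ in $(0,1/\sqrt{2})$ is straightforward from the monotone non-constancy of $\hat{T}^k$ and reduces the key-set requirement to an application of Remark~\ref{rmk:5a}(i).
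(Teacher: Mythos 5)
Your proposal is correct and follows essentially the same route as the paper: the paper's proof is precisely an application of Theorem~\ref{thm:sub-mel1}, using that $\hat{M}^{n/l}(\tau)$ is not identically zero for $\delta\neq 0$ because $\hat{J}_1(k,l)\neq 0$ on $\bigl(0,1/\sqrt{2}\bigr)$, with $\hat{R}$ serving as the key set. You merely supply more detail (the accumulation-point argument via Remark~\ref{rmk:5a}(i) and the positivity of $(2k^2-1)E(k)+k'^2K(k)$) than the paper records.
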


\begin{rmk}\
\begin{enumerate}
\setlength{\leftskip}{-2em}
\item[(i)]
Since the subharmonic Melnikov function $\hat{M}^{n/l}(\tau)$ is constant for $l\neq1$,
 Theorem~\ref{thm:sub-mel2} is not applicable to \eqref{DuffingAut} with $a=-1$.
So we cannot exclude the possibility that
 the system \eqref{DuffingAut} with $a=-1$ is $(3,0)$-integrable
 in the meaning of Theorem~\ref{thm:main-integ} when $\beta,\delta\neq 0$.
\item[(ii)]
It was shown in \cite{Y21a} that the system~\eqref{DuffingAut} with $a=-1$ is
meromorphically nonintegrable in a meaning similar to that of Theorem~\ref{thm:main-integ} when the independent and state variables are extended to complex ones.
\end{enumerate}
\end{rmk}

\section*{Acknowledgement}
This work was partially supported by the JSPS KAKENHI Grant Numbers JP17H02859 and JP19J22791.

%\section*{acknowledgment}
%The authors thank 
% for useful comments and helpful discussions.
% This work is 
%Mitsuru Shibayama,Yoshiyuki Yamaguchi, Shogo Yamanaka
%Hiroshi Kokubu, Juan Morales, Shotaro Yamazoe, Junji Yamada
 
%\newpage
% **********************************************************
% Appendices
% **********************************************************

\appendix
\renewcommand{\theequation}{\Alph{section}.\arabic{equation}}
\setcounter{equation}{0}

\section{Proof of Proposition~\ref{fcn-ind}}
%To prove this, we prove several lemmas.
%First we consider analytically dependence of analytic functions.
In this Appendix, we prove Proposition~\ref{fcn-ind}.
We begin with the following lemma.

\begin{lem}\label{lem:rank}
Let $\Omega$ be an open subset of $\Rset^k$
 and let $\chi_j:\Omega\to \Rset$, $j=1,\ldots,m$, be analytic, where $k,m\in\Nset$.
Let $\chi(x)=(\chi_1(x),\ldots,\chi_m(x))$.
If $\rank d\chi$ is constant on $\Omega$ and less than $m$,
 then for any $x\in \Omega$ there exists a neighborhood $V$ of $x$
 on which $\chi_1, \ldots,\chi_m$ are \emph{analytically dependent},
 i.e., there exist an open set $\Omega' \subset \Rset^m$
 and a non-constant analytic map $\zeta:\Omega'\to\Rset$
 such that $\chi(V)\subset \Omega'$ and $\zeta(\chi(y))=0$ for any $y\in V$.
\end{lem}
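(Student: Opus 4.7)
The plan is to apply the analytic constant rank theorem locally. Fix $x_0 \in \Omega$ and let $r = \rank d\chi$ on $\Omega$, noting that $r < m$ by hypothesis. After reordering the components of $\chi$, I may assume $d\chi_1(x_0), \ldots, d\chi_r(x_0)$ are linearly independent, so by continuity they remain linearly independent on a neighborhood $V$ of $x_0$.

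First I would flatten $\chi_1, \ldots, \chi_r$ into coordinates: extending these $r$ functions by $k-r$ of the ambient coordinate functions yields an analytic map with invertible differential at $x_0$, so the analytic inverse function theorem produces analytic local coordinates $(u_1, \ldots, u_k)$ on a (possibly smaller) $V$ in which $\chi_j(u) = u_j$ for $j = 1, \ldots, r$.

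Next I would use the constant rank hypothesis to force the remaining $\chi_j$ to depend only on $u_1, \ldots, u_r$. In the new coordinates, the Jacobian of $\chi$ has the block form $\left(\begin{smallmatrix} I_r & 0 \\ A(u) & B(u) \end{smallmatrix}\right)$ with $B(u) = (\partial \chi_j/\partial u_i)_{j>r,\,i>r}$; since the total rank is identically $r$ on $V$, the block $B$ vanishes identically, and analyticity then yields analytic functions $f_{r+1}, \ldots, f_m$ on a neighborhood $W \subset \Rset^r$ of $(\chi_1(x_0), \ldots, \chi_r(x_0))$ with $\chi_j = f_j(\chi_1, \ldots, \chi_r)$ on $V$.

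Finally, after shrinking $V$ so that $(\chi_1, \ldots, \chi_r)(V) \subset W$ and setting $\Omega' = W \times \Rset^{m-r}$, the function $\zeta(y_1, \ldots, y_m) = y_{r+1} - f_{r+1}(y_1, \ldots, y_r)$ is analytic on $\Omega'$, non-constant since $\partial\zeta/\partial y_{r+1} \equiv 1$, and satisfies $\chi(V) \subset \Omega'$ together with $\zeta \circ \chi \equiv 0$ on $V$. The main obstacle is really contained in the first two steps: picking the correct $r$ of the $\chi_j$ to invert and checking that the analytic inverse function theorem applies in a joint neighborhood where the constant rank condition remains in force. Once the coordinate normal form is established, vanishing of the block $B$ and the construction of $\zeta$ are immediate.
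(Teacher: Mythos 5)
Your proof is correct and rests on the same idea as the paper's: the paper invokes the analytic constant rank theorem (Theorem~1.3.14 of Narasimhan) as a black box and takes $\zeta$ to be the last component $u'_m$ of the target-side straightening chart, whereas you prove the normal form inline via the analytic inverse function theorem and read off the dependence explicitly as $\chi_j=f_j(\chi_1,\dots,\chi_r)$, taking $\zeta(y)=y_{r+1}-f_{r+1}(y_1,\dots,y_r)$. The two arguments are equivalent in substance; yours is merely self-contained (just make sure $V$ is taken connected, e.g.\ a cube in the $u$-coordinates, so that $B\equiv 0$ really forces $\chi_j$ for $j>r$ to depend only on $u_1,\dots,u_r$, and note that the degenerate case $r=0$ is handled trivially).
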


\begin{proof}
Using Theorem~1.3.14 %(the Rank Theorem)
 of \cite{N68}
 and an argument in the proof of Theorem~1.4.15 of \cite{N68},
 we can immediately obtain the desired result as follows.
The theorem says that
 there exist a neighborhood $V$ (resp. $V'$) of $x$ (resp. of $\chi(x)$),
 a cube $Q$ (resp. $Q'$) in $\Rset^k$ (resp. in $\Rset^m$)
 and analytic isomorphisms $u:Q\to V$ and $u':V'\to Q'$
 such that the composite map $u'\circ\chi\circ u$
 has the form $(x_1, \ldots, x_k)\to (x_1, \ldots, x_{m'}, 0, \ldots, 0)$,
 where $x_j$ is the $j$th element of $x$ for $j=1,\ldots,k$ and $m'=\rank d\chi<m$.
Here a \emph{cube} in $\Rset^k$ is an open set  of the form
\[
\{x\mid |x_j-a_j|<r_j,j=1,\ldots,k\}
\]
for some $a_j\in\Rset$ and $r_j>0$, $j=1,\ldots,k$.
Letting $u'=(u'_1, \ldots, u'_m)$ and $\zeta=u'_m$,
 we have $\zeta(\chi(y))=0$ for every $y\in V$.
\end{proof}

Let $f_\epsilon: \M\to\Rset$ be an analytic function
 such that it depends on $\epsilon$ analytically.
We expand it near $\epsilon=0$ as $f_\epsilon(x)=\sum_{j=0}^\infty f^j(x)\epsilon^j$,
 where $f^j(x)$, $j\in\Zset_0:=\Nset\cup\{0\}$, are analytic functions on $\mathscr{M}$.
Define the order function $\sigma(f_\epsilon)$ by
\[
\sigma(f_\epsilon):=\min\{j\in\Zset_0%\cup\{+\infty\}
 \mid f^j(x)\not\equiv 0\}
\]
if $f_\epsilon\not\equiv 0$
 and $\sigma(0):=+\infty$, as in \cite{BCRS96}.
%\red{We extend the domain of the order function $\sigma$ to tensor fields on $\mathscr{M}$ depending on $\epsilon$ analytically.}

\begin{lem}\label{fcn-ind:step1}
Suppose that
 $f_\epsilon(x)$ is a nonconstant analytic first integral of \eqref{eqn:p-gsys}
  depending analytically on $\epsilon$ near $\epsilon=0$.
Then there exists an analytic first integral
 $\tilde{f}_\epsilon(x)=\tilde{f}^0(x)+O(\epsilon)$ depending analytically on $\epsilon$ near $\epsilon=0$
 such that $\tilde{f}^0(x)$ is not constant.
\end{lem}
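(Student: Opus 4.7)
The strategy is simply to strip off the constant-in-$x$ leading terms of the $\epsilon$-series of $f_\epsilon$ until a nonconstant coefficient is revealed, and then rescale by the appropriate power of $\epsilon$. Write the expansion
\[
f_\epsilon(x)=\sum_{j=0}^\infty f^j(x)\,\epsilon^j.
\]
Since $f_\epsilon$ is nonconstant (in the sense that it does not reduce to a function of $\epsilon$ alone), not every coefficient $f^j$ can be a constant function on $\mathscr{M}$, so the index
\[
k:=\min\{j\in\Zset_0\mid f^j\text{ is not identically constant on }\mathscr{M}\}
\]
is well defined and finite. If $k=0$, nothing needs to be done and I take $\tilde f_\epsilon:=f_\epsilon$. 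Otherwise $f^j\equiv c_j\in\Rset$ for $0\le j<k$, and I define
\[
\tilde f_\epsilon(x):=\frac{1}{\epsilon^k}\Bigl(f_\epsilon(x)-\sum_{j=0}^{k-1}c_j\,\epsilon^j\Bigr)
=f^k(x)+\epsilon f^{k+1}(x)+\cdots,
\]
which is manifestly analytic in $(x,\epsilon)$ near $\epsilon=0$ since the bracketed expression vanishes to order $k$ in $\epsilon$. By the choice of $k$, the leading term $\tilde f^0(x)=f^k(x)$ is nonconstant, which is what the conclusion requires.

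It remains to check that $\tilde f_\epsilon$ is again a first integral of \eqref{eqn:p-gsys}. For every $\epsilon\neq 0$ close to $0$, $\tilde f_\epsilon$ is a nonzero scalar multiple of $f_\epsilon$ minus a constant, hence a first integral; by analyticity in $\epsilon$ the identity $X_\epsilon\tilde f_\epsilon\equiv 0$ extends to $\epsilon=0$. Equivalently, expanding $X_\epsilon f_\epsilon=0$ in powers of $\epsilon$ and using $X^i c_l=0$ for $l<k$, the order-$k$ coefficient collapses to $X^0 f^k=0$, confirming directly that $\tilde f^0=f^k$ is a first integral of the unperturbed field $X^0$.

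I do not anticipate any serious obstacle; the one point warranting comment is the interpretation of the hypothesis. The word ``nonconstant'' must be read as ``not constant as a function of $x$'' (equivalently, at least one $f^j$ is a nonconstant function on $\mathscr{M}$), since otherwise the assumption could be met by, e.g., $f_\epsilon(x)=\epsilon$, which admits no such $\tilde f_\epsilon$. Under this natural reading the minimum defining $k$ exists, and the construction above completes the proof.
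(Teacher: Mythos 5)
Your proposal is correct and follows essentially the same route as the paper: the paper sets $k=\sigma(df_\epsilon)$ (the smallest index $j$ with $df^j\not\equiv 0$, i.e.\ with $f^j$ nonconstant), subtracts the constant partial sum $\sum_{j=0}^{k-1}\epsilon^j f^j$, and divides by $\epsilon^k$, exactly as you do. Your explicit remark that ``nonconstant'' must mean nonconstant in $x$ is a correct reading of the hypothesis that the paper leaves implicit.
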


\begin{proof}
Since $f_\epsilon$ is not constant, $\sigma(df_\epsilon)$ takes a finite value.
Let $k=\sigma(df_\epsilon)$ and $f_\epsilon(x)=\sum_{j=0}^\infty \epsilon^j f^j(x)$.
Define
\begin{align*}
\tilde{f}_\epsilon(x):=\frac{1}{\epsilon^k}\left(f_\epsilon(x)-\sum_{j=0}^{k-1}\epsilon^j f^j(x)\right).
\end{align*}
Then $\tilde{f}_\epsilon(x)=f^k(x)+O(\epsilon)$ and $\tilde{f}^0(x)=f^k(x)$ is not constant.
Moreover, $\tilde{f}_\epsilon(x)$ is a first integral of \eqref{eqn:p-gsys}
 since $X_\epsilon(f_\epsilon)=0$ and $\sum_{j=0}^{k-1}\epsilon^j f^j$ is constant.
\end{proof}

\begin{proof}[Proof of Proposition~\ref{fcn-ind}]
Modifying the proof of Ziglin's lemma \cite{Ad,CR88,Z82a} slightly, we prove this proposition.
For $k=1$
 the statement of the proposition holds by Lemma~\ref{fcn-ind:step1}.
Let ${k}>1$ and suppose that it is true up to $k-1$. 
Let $G_1^\epsilon(x),\ldots,G_k^\epsilon(x)$
 be analytic first integrals of \eqref{eqn:p-gsys} in a neighborhood of $F^{-1}(c)$ near $\epsilon=0$
 such that they are functionally independent for $\epsilon\neq 0$
 and depend analytically on $\epsilon$.
Without loss of generality,
 we assume that $G_1^0(x),\ldots, G_{k-1}^0(x)$ are functionally independent near $F^{-1}(c)$.
Letting $G^\epsilon(x)=(G_1^\epsilon(x),\ldots,G_k^\epsilon(x))$, we see that $G^0(\varphi(I,\theta))$ depends only on $I$, as in the proof of Lemma~\ref{erg}(i)\green{,}
 where $\varphi$ denotes the analytic diffeomorphism
  in Proposition~\ref{LAJ}(ii). Let $\tilde{G}_j(I)=G_j^0(\varphi(I,\theta))$ for $j=1,\ldots,k$.
%By abuse of the notation we use the same notation $G(I)$ for $G(\varphi(I,\theta))$.
Note that, if $d\tilde{G}_1(I), \ldots, d\tilde{G}_k(I)$ are linearly independent at $I=I_0\in U$,
then so are $dG_1(x), \ldots, dG_k(x)$ on $\varphi(\{I_0\}\times \Tset^q)\subset \mathcal{U}$.

Assume that $\tilde{G}_1(I),\ldots,\tilde{G}_{k-1}(I), \tilde{G}_k(I)$ are functionally dependent
 in an open set $U'\subset U$.
So $\Omega:=\{p\in U'\mid\rank d_p\tilde{G}=k-1\}$
 contains a dense open set in $U'$
 since %$dG_1^0, \ldots, dG_k^0$ are linearly dependent but 
 $d\tilde{G}_1(I), \ldots, d\tilde{G}_{k-1}(I)$ are functionally independent on $U$.
 %almost everywhere near $F^{-1}(c)$.
By Lemma~\ref{lem:rank}, there exist an open set $\Omega'\subset\Rset^{k}$
 and a nonzero analytic function $\zeta:\Omega'\to\Rset$
 such that $\tilde{G}(V)=(\tilde{G}_1(V),\ldots, \tilde{G}_k(V))\subset\Omega'$ and
\begin{equation*}%\label{xi}
\zeta(\tilde{G}_1(I),\ldots,\tilde{G}_k(I))=0
\end{equation*}
in a neighborhood $V$ of $p\in\Omega$.
%Note that $(\partial \zeta/\partial y_k)(\red{G_1^0(I), \ldots, G_{k-1}^0(I, y_k)})$ is not identically zero \red{on $y_k$}
% since if it was identically zero, then $\zeta$ would not depend on $y_k$
% and consequently \red{$G_1^0(I),\ldots,G_{k-1}^0(I)$} would be functionally dependent.
Moreover, there is a positive integer $s$ such that $({\partial^s \zeta}/{\partial y_k^s})(\tilde{G}(I))\neq0$,
since if not,
then $\zeta(\tilde{G}_1(I),\ldots,\tilde{G}_{k-1}(I),y_k)$ depends on $y_k$ near $\tilde{G}(V)$
and consequently $\tilde{G}_1(I),\ldots,\tilde{G}_{k-1}(I)$ are functionally dependent.
Let $s$ be the smallest one of such integers and let $\tilde{\zeta}(y)=({\partial^{s-1} \zeta}/{\partial y_k^{s-1}})(y)$. Then $\tilde{\zeta}$ satisfies
\begin{equation*}%\label{xi2}
\tilde{\zeta}(\tilde{G}_1(I),\ldots,\tilde{G}_k(I))=0
\end{equation*}
and $(\partial\tilde{\zeta}/\partial y_k)(\tilde{G}(I))\neq 0$ on $V$.
%This means that
Hence,
\begin{equation}\label{xi3}
\tilde{\zeta}(G^0(x))=0
\end{equation}
and $(\partial\tilde{\zeta}/\partial y_k)(G^0(x))\neq 0$ on $\varphi(V\times \Tset^q)$.
%Hence, we see that $\zeta(y_1,\ldots,y_{k-1},y_k)$ is expanded about $y_k$ as
%\begin{align*}
%\zeta(y)=y_k\Biggl\{\sum_{j=0}^{\infty} b_j(y_1,\ldots,y_{k-1})y_k^j\Biggr\},
%\end{align*}
%where $b_j(y_1,\ldots, y_{k-1})$, $j\in\Zset_0$, are analytic functions.
%So there exists an analytic function $\tilde{\zeta}(y_1,\ldots,y_{k-1},y_k)$
% such that \blue{$(\partial\tilde{\zeta}/\partial y_k)(\red{y})\neq 0$ when $\red{y_k}=0$}, and
%\[
%\zeta(y_1,\ldots,y_{k-1}, y_k)=y_k^l\tilde{\zeta}(y_1,\ldots,y_{k-1}, y_k)
%\]
%for some positive integer $l$.
%Moreover, by \eqref{xi}, the identity
%\begin{equation}\label{xi2}
%\tilde{\zeta}(G_1^0(x),\ldots,G_k^0(x))=0
%\end{equation}
%also holds in $V$.

Let $\hat{G}_k^\epsilon(x)=\tilde{\zeta}(G^\epsilon(x))/\epsilon$.
By \eqref{xi3}
 $\hat{G}_k^\epsilon$ is an analytic first integral depending analytically on $\epsilon$.
We have
\begin{align*}
d\hat{G}_k^\epsilon=\epsilon^{-1}d(\tilde{\zeta}(G^\epsilon(x)))
=\epsilon^{-1}\sum_{j=1}^k \frac{\partial \tilde{\zeta}}{\partial y_j}(G^\epsilon(x)) dG_j^\epsilon
\end{align*}
and
\begin{align*}
N(\hat{G}^\epsilon)
:=dG_1^\epsilon\wedge\ldots\wedge dG_{k-1}^\epsilon\wedge d\hat{G}_k^\epsilon
=\epsilon^{-1} \frac{\partial\tilde{\zeta}}{\partial y_k}(G^\epsilon(x))
 dG_1^\epsilon\wedge\ldots\wedge d{G}_k^\epsilon.
\end{align*}
Since $(\partial\tilde{\zeta}/\partial y_k)(G^0(x))\neq 0$ on $\varphi(V\times \Tset^q)$,
 we have $\sigma((\partial \tilde{\zeta}/\partial y_k)(G^0(x)))=0$, so that
\begin{align*}
\sigma(N(\hat{G}^\epsilon))=\sigma(\epsilon^{-1}N({G}^\epsilon))
+\sigma\left(\frac{\partial \tilde{\zeta}}{\partial y_k}(G^\epsilon(x))\right)
=\sigma(N({G}^\epsilon))-1.
\end{align*}
Repeating this procedure
 %\red{constructing $\red{\hat{G}}_k^\epsilon=\tilde{\zeta}(G_1^\epsilon(x),\ldots,G_k^\epsilon(x))/\epsilon$}
 till $\sigma(N(\hat{G}^\epsilon))=0$, we obtain
\[
dG_1^0\wedge\ldots\wedge dG_{k-1}^0\wedge d\hat{G}_k^0\neq0,
\]
which means the desired result.
\end{proof}

%\newpage
% **********************************************************
% Bibliography
% **********************************************************


\begin{thebibliography}{99}
%\bibitem{AM09}
%R.~Abraham and J. E.~Marsden,
%\textit{Foundations of Mechanics}, 2nd ed., Benjamin/Cummings Publishing Co., Inc., Advanced Book Program, Reading, Mass., 1978.


\bibitem{A88}
V.I.~Arnold,
\textit{Geometrical Methods in the Theory of Ordinary Differential Equations}, 2nd ed.,
Springer, New York, 1988. 

\bibitem{A89}
V.I.~Arnold,
\textit{Mathematical Methods of Classical Mechanics}, 2nd ed.,
Springer, New York, 1989.

\bibitem{AKN06}
V.I.~Arnold, V.V.~Kozlov and A.I.~Neishtadt,
\textit{Dynamical Systems III: Mathematical Aspects of Classical and Celestial Mechanics}, 3rd ed.,
%Translated from the Russian by A. Iacob. Encyclopaedia of Mathematical Sciences, 3. 
Springer, Berlin, 2006.


\bibitem{Ad}
M.~Audin,
\textit{Hamiltonian Systems and Their Integrability},
American Mathematical Society, %(AMS)/Soci\'{e}t\'{e} Math\'{e}matique de France (SMF), 
Providence, RI, %/Paris (2008).
2008.

\bibitem{AZ10}
M.~Ayoul and N.T.~Zung,
Galoisian obstructions to non-Hamiltonian integrability,
\textit{C. R. Math. Acad. Sci. Paris}, %vol.~
\textbf{348} (2010), %Nos.~23-24,
 1323--1326.

\bibitem{BCRS96}
A.~Baider, R.C.~Churchill, D.L.~Rod and M.F.~Singer,
On the infinitesimal geometry of integrable systems,
\textit{Mechanics Days},
W.F.~Shadwick, P.S.~Krishnaprasad and T.S.~Ratiu (eds.),
American Mathematical Society, Providence, RI, 1996.

\bibitem{B96}
J.~Barrow-Green,
\textit{Poincar\'e and the Three-Body Problem},
American Mathematical Society, Providence, RI, 1996.

\bibitem{B98}
O.I.~Bogoyavlenskij, 
Extended integrability and bi-hamiltonian systems, 
\textit{Comm. Math. Phys.}, \textbf{196} (1998), %No.~1,
19--51.


%\bibitem{BUF97}
%A.F.~Brunello, T.~Uzer and D.~Farrelly,
%Hydrogen atom in circularly polarized microwaves:
% Chaotic ionization via core scattering,
%\textit{Phys. Rev. A}, \textbf{55} (1997), 3730--3745.

\bibitem{BF54}
P.F.~Byrd and M.D.~Friedman,
\textit{Handbook of Elliptic Integrals for Engineers and Physicists},
Springer, Berlin, 1954.

%\bibitem{CCH85}
%J.~Carr, S.~Chow and J.K.~Hale,
%Abelian integrals and bifurcation theory, 
%\textit{J. Differential Equations}, 59 (1985), No.~3, 413--436. 

\bibitem{CR88}
R.~Churchill and D.L.~Rod,
Geometrical aspects of Ziglin's nonintegrability theorem for complex Hamiltonian systems,
\textit{J. Differential Equations}, \textbf{76} (1988), %No.~1,
91--114.

%\bibitem{G85}
%J.~Gruendler,
%The existence of homoclinic orbits and the method of Melnikov for systems in $R^n$,
%\textit{SIAM J. Math. Anal.}, 16 (1985), No.~5, 907--931.

%\bibitem{G92}
%J.~Gruendler,
%Homoclinic solutions for autonomous dynamical systems in arbitrary dimension,
%\textit{SIAM J. Math. Anal.}, 23 (1992), No.~3, 702--721.

\bibitem{GH83}
J.~Guckenheimer and P.~Holmes,
\textit{Nonlinear Oscillations, Dynamical Systems, and Bifurcations of Vector Fields},
Springer, New York, 1983.

\bibitem{H79}
P.~Holmes,
A nonlinear oscillator with a strange attractor,
\textit{Philos. Trans. Roy. Soc. London Ser. A},
\textbf{292} (1979), 419-448.

%\bibitem{HM82}
%P.J.~Holmes and J.E.~Marsden,
%Melnikov's method and Arnold diffusion for perturbations of integrable Hamiltonian systems, 
%\textit{J. Math. Phys.}, 23 (1982), no. 4, 669--675.

%\bibitem{H99}
%G.~Haller,
%\textit{Chaos Near Resonance},
%Springer, New York, 1999.


%\bibitem{IY08}
%Y. ~Ilyashenko, S.~Yakovenko, 
%\textit{Lectures on Analytic Differential Equations}, American Mathematical Society, Providence, RI, 2008.

%\bibitem{K80}
%V.V.~Kozlov,
%\textit{Methods of Qualitative Analysis in the Dynamics of a Rigid Body} (in Russian),
%Moskow University, Moskow, 1980.

\bibitem{K83}
V.V.~Kozlov,
Integrability and non-integarbility in Hamiltonian mechanics,
\textit{Russian Math. Surveys},
\textbf{38} (1983), 1--76.

\bibitem{K96}
V.V.~Kozlov,
\textit{Symmetries, Topology and Resonances in Hamiltonian Mechanics},
%Translated from the Russian manuscript by S. V. Bolotin, D. Treshchev and Yuri Fedorov,
Springer, Berlin, 1996.

%\bibitem{MR99}
%J.E.~Marsden and T.S.~Ratiu,
%\textit{Introduction to Mechanics and Symmetry:
% A Basic Exposition of Classical Mechanical Systems}, 2nd ed.,
 %Texts in Applied Mathematics, 17. Springer-Verlag, 
%Springer, New York, 1999.

\bibitem{M63}
V.K.~Melnikov,
On the stability of the center for time periodic perturbations,
\textit{Trans. Moscow Math. Soc.}, \textbf{12} (1963), %No.~1,
 1--56.


\bibitem{M99}
J.J.~Morales-Ruiz,
\textit{Differential Galois Theory and Non-Integrability of Hamiltonian Systems},
Birkh\"auser, Basel, 1999.

\bibitem{M02}
J.J.~Morales-Ruiz,
A note on a connection between the Poincar\'e-Arnold-Melnikov integral
 and the Picard-Vessiot theory, 
in \textit{Differential Galois theory}, T.~Crespo and Z.~Hajto (eds.),
Banach Center Publ. 58, 
Polish Acad. Sci. Inst. Math., 2002, pp.~165--175.


\bibitem{MR01}
J.J.~Morales-Ruiz and J.P.~Ramis,
Galoisian obstructions to integrability of Hamiltonian systems,
\textit{Methods, Appl. Anal.}, \textbf{8} (2001), %No.~1,
33--96.

\bibitem{M73}
J.~Moser,
\textit{Stable and Random Motions in Dynamical Systems:
 With Special Emphasis on Celestial Mechanics},
%Hermann Weyl Lectures, the Institute for Advanced Study, Princeton, N. J.
%Annals of Mathematics Studies, No. 77,
Princeton Univ. Press, Princeton, NJ,
%; University of Tokyo Press, Tokyo, 
1973.

\bibitem{MY}
S.~Motonaga and K.~Yagasaki,
Persistence of periodic and homoclinic orbits, first integrals and commutative vector fields in dynamical systems,
submitted for publication.

\bibitem{N68}
R.~Narasimhan,
\textit{Analysis on Real and Complex Manifolds},
%Advanced Studies in Pure Mathematics, Vol. 1 Masson & Cie, \UTF{00C9}diteurs, Paris; 
North-Holland, Amsterdam,1968.


%\bibitem{OP18}
%M.~Oll\'{e} and J.R.~Pacha,
%Hopf bifurcation for the hydrogen atom in a circularly polarized microwave field,
%\textit{Commun. Nonlinear Sci. Numer. Simulat.}, \textbf{62} (2018), 27--60.

%\bibitem{P84}
%K.~Palmer,
%Exponential transversal dichotomies and homoclinic points,
%\textit{J. Differential Equations}, 55 (1984), No.~2, 225-256.

\bibitem{P90}
H.~Poincar\'e,
Sur le probl\'eme des trois corps et les \'equations de la dynamique,
\textit{Acta Math.}, \textbf{13} (1890), 1--270;
English translation:
\textit{The Three-Body Problem and the Equations of Dynamics},
Translated by D.~Popp, Springer, Cham, Switzerland, 2017.

\bibitem{P92}
H.~Poincar\'e,
\textit{New Methods of Celestial Mechanics}, Vol.~1,
%American Institute of Physics,
AIP Press,  New York, 1992 (original 1892).

\bibitem{RPJK16}
F.A.~Rodrigues, T.K.D.~Peron, P.~Ji and J.~Kurths,
The Kuramoto model in complex networks,
\textit{Phys. Rep.}, 610 (2016), 1--98.

%\bibitem{SVM10}
%J.A.~Sanders, F.~Verhulst and J.~Murdock,
%\textit{Averaging Methods in Nonlinear Dynamical Systems}, 2nd ed.,
%Springer, New York, 2010.

\bibitem{W90}
S.~Wiggins,
\textit{Introduction to Applied Nonlinear Dynamical Systems and Chaos}, %2nd ed.,
Springer, New York, %2003
1990.


%\bibitem{W41}
%A.~Wintner,
%\textit{The Analytical Foundations of Celestial Mechanics},
%Princeton University Press, Princeton, 1941.

\bibitem{Y94}
K.~Yagasaki,
Homoclinic motions and chaos in the quasiperiodically forced van der Pol-Duffing oscillator
 with single well potential,
 \textit{Proc. R. Soc. Lond. A}, \textbf{445} (1994), 597--617.

\bibitem{Y96}
K.~Yagasaki,
The Melnikov theory for subharmonics and their bifurcations in forced oscillations,
\textit{SIAM J. Appl. Math.}, \textbf{56} (1996), %No.~6,
1720--1765.

\bibitem{Y02}
K.~Yagasaki,
Melnikov's method and codimension-two bifurcations in forced oscillations,
\textit{J. Differential Equations}, \textbf{185} (2002), %No.~1,
 1--24.

\bibitem{Y03}
K.~Yagasaki,
Degenerate resonances in forced oscillators,
\textit{Discrete Contin. Dyn. Syst. B}, \textbf{3} (2003), %No.~3,
423--438.

\bibitem{Y21a}
K.~Yagasaki,
Nonintegrability of nearly integrable dynamical systems near resonant periodic orbits,
submitted for publication.

\bibitem{Y21b}
K.~Yagasaki,
Nonintegrability of the restricted three-body problem,
submitted for publication.

\bibitem{Z82a}
S.L.~Ziglin,
Bifurcation of solutions and the nonexistence of first integrals in Hamiltonian mechanics. I. 
\textit{Funktsional. Anal. i Prilozhen.}, \textbf{16} (1982), %no. 3,
30--41, 96.

\bibitem{Z82b}
S.L.~Ziglin,
Self-intersection of the complex separatrices and the non-existing of the integrals in the Hamiltonian systems
with one-and-half degrees of freedom,
\textit{J. Appl. Math. Mech.}, \textbf{45} (1982), %no. 3,
411--413.

\bibitem{Z06}
N.T.~Zung,
Torus actions and integrable systems,
in \textit{Topological Methods in the Theory of Integrable Systems},
A.V.~Bolsinov, A.T.~Fomenko and A.A.~Oshemkov (eds.),
Cambridge Scientific Publications, Cambridge, 2006, pp.~289--328.

\bibitem{Z18}
N.T.~Zung,
A conceptual approach to the problem of action-angle variables,
\textit{Arch. Ration. Mech. Anal.}, \textbf{229} (2018), %no. 2,
789--833.

\end{thebibliography}
\end{document}